\newtheorem{thm}{Theorem}[section]
\newtheorem{prop}[thm]{Proposition}
\theoremstyle{definition}
\theoremstyle{remark}
\newtheorem{rem}[thm]{Remark}
\def\RR{\mathbb{R}}
\def\NN{\mathbb{N}}
\def\di{\displaystyle}
\title{Anisotropic singular Neumann equations with unbalanced growth}
\author{Nikolaos S. Papageorgiou\footnote{Department of Mathematics, National Technical University, Zografou  Campus, 15780 Athens, Greece.
E-mail: {\tt npapg@math.ntua.gr}}, Vicen\c tiu D. R\u adulescu\footnote{Faculty of Applied Mathematics, AGH University of Science and Technology, 30-059 Krakow, Poland
\& Department of Mathematics, University of Craiova, 200585 Craiova, Romania.  Corresponding author. ORCID: {\tt http://orcid.org/0000-0003-4615-5537}. E-mail: {\tt radulescu@inf.ucv.ro} },  Du\v{s}an D. Repov\v s\footnote{Faculty of Education and Faculty of Mathematics and Physics, University of Ljubljana, and Institute of Mathematics, Physics and Mechanics, 1000 Ljubljana, Slovenia.
E-mail: {\tt dusan.repovs@guest.arnes.si}}}
\begin{document}
\maketitle

\begin{abstract}
We consider a nonlinear parametric Neumann problem driven by the anisotropic $(p,q)$-Laplacian and a reaction which exhibits the combined effects of a singular term and of a parametric superlinear perturbation. We are looking for positive solutions. Using a combination of topological and variational tools together with suitable truncation and comparison techniques, we prove a bifurcation-type result describing the set of positive solutions as the positive parameter $\lambda$ varies. We also show the existence of minimal positive solutions $u_\lambda^*$ and determine the monotonicity and continuity properties of the map $\lambda\mapsto u_\lambda^*$.

\smallskip\noindent {\bf 2010 Mathematics Subject Classification:} 35J75, 35J60, 35J20.

\smallskip\noindent {\bf Keywords:} Modular function, truncation, comparison principle, minimal solution, anisotropic regularity.
\end{abstract}

\section{Introduction}
This paper was motivated by several recent contributions to the qualitative analysis of nonlinear problems with unbalanced growth. We mainly refer to the pioneering contributions of Marcellini \cite{marce1,marce2,marce3} who studied lower semicontinuity and regularity properties of minimizers of certain quasiconvex integrals. Problems of this type arise in nonlinear elasticity and are connected with the deformation of an elastic body, cf. Ball \cite{ball1,ball2}.

We are concerned with the qualitative analysis of a class of anisotropic singular problems with Neumann boundary condition and driven by a differential operator with unbalanced growth. The features of this paper are the following:

(i) the problem studied in the present work is associated to a {\it double phase energy} with {\it variable exponents} (variational integral with anisotropic unbalanced growth);

(ii) the reaction is both {\it singular} and {\it anisotropic};

(iii) we assume a {\it Neumann boundary condition}.

To the best of our knowledge, this is the first paper dealing with the combined effects generated by the above features.

\subsection{Unbalanced problems and their historical traces}
Let $\Omega$ be a bounded domain in $\RR^N$ ($N\geq 2$) with a smooth boundary. If $u:\Omega\to\RR^N$ is the displacement and $Du$ is the $N\times N$  matrix of the deformation gradient, then the total energy can be represented by an integral of the type
\begin{equation}\label{paolo}I(u)=\int_\Omega f(z,Du(z))dz,\end{equation}
where the energy function $f=f(z,\xi):\Omega\times\RR^{N\times N}\to\RR$ is quasiconvex with respect to $\xi$, see Morrey \cite{morrey}. One of the simplest examples considered by Ball is given by functions $f$ of the type
$$f(\xi)=g(\xi)+h({\rm det}\,\xi),$$
where ${\rm det}\,\xi$ is the determinant of the $N\times N$ matrix $\xi$, and $g$, $h$ are nonnegative convex functions, which satisfy the growth conditions
$$g(\xi)\geq c_1\,|\xi|^p;\quad\lim_{t\to+\infty}h(t)=+\infty,$$
where $c_1$ is a positive constant and $1<p<N$. The condition $p\leq N$ is necessary to study the existence of equilibrium solutions with cavities, that is, minima of the integral \eqref{paolo} that are discontinuous at one point where a cavity forms; in fact, every $u$ with finite energy belongs to the Sobolev space $W^{1,p}(\Omega,\RR^N)$, and thus it is a continuous function if $p>N$. In accordance with these problems arising in nonlinear elasticity, Marcellini \cite{marce1,marce2} considered continuous functions $f=f(x,u)$ with {\it unbalanced growth} that satisfy
$$c_1\,|u|^p\leq |f(z,u)|\leq c_2\,(1+|u|^q)\quad\mbox{for all}\ (x,u)\in\Omega\times\RR,$$
where $c_1$, $c_2$ are positive constants and $1\leq p\leq q$. Regularity and existence of solutions of elliptic equations with $p,q$--growth conditions were studied in \cite{marce2}.

The study of non-autonomous functionals characterized by the fact that the energy density changes its ellipticity and growth properties according to the point has been continued in a series of remarkable papers by Mingione {\it et al.} \cite{1Bar-Col-Min, 2Bar-Col-Min, 8Col-Min}. These contributions are in relationship with the work of Zhikov \cite{23Zhikov, 24Zhikov}, which describe the
behavior of phenomena arising in nonlinear
elasticity.
In fact, Zhikov intended to provide models for strongly anisotropic materials in the context of homogenisation.
In particular, he considered the following model
functional
\begin{equation}\label{mingfunc}
{\mathcal P}_{p,q}(u) :=\int_\Omega (|Du|^p+a(z)|Du|^q)dz,\quad 0\leq a(x)\leq L,\ 1<p<q,
\end{equation}
where the modulating coefficient $a(x)$ dictates the geometry of the composite made of
two differential materials, with hardening exponents $p$ and $q$, respectively.

In the present paper we are concerned with a problem whose energy is of the type defined in \eqref{mingfunc} but such that the exponents $p$ and $q$ are variable (they depend on the point).

\subsection{Statement of the problem}
Let $\Omega\subseteq\RR^N$ be a bounded domain with a $C^2$-boundary $\partial\Omega$. In this paper we study the following parametric singular anisotropic $(p,q)$-equation:
\begin{equation}\tag{\mbox{$P_\lambda$}}
\left\{%an
\begin{array}{lll}
-\Delta_{p(z)}u(z)-\Delta_{q(z)}u(z)+\xi(z)u(z)^{p(z)-1}=u(z)^{-\eta(z)}+\lambda f(z,u(z)) \text{ in } \Omega,\\
\di \frac{\partial u}{\partial n}=0 \mbox{ on }\partial\Omega,\  {u(z)>0\ \mbox{for all $z\in\Omega$}},\ \lambda>0.
\end{array}
\right.
\end{equation}

 {In this problem, we make the following hypotheses for the exponents $p(\cdot),\, q(\cdot),\, \eta(\cdot)$:
$$p,\, q,\, \eta\in C^1(\overline\Omega),\ q_-\leq q_+<p_-\leq p_+,\ 0<\eta(z)<1\ \mbox{for all}\ z\in\overline\Omega,$$
where for every $r\in C(\overline\Omega)$ we define
$$r_-:=\min_{\overline\Omega}r,\ r_+:=\max_{\overline\Omega}r.$$}

Also for $r\in C(\overline{\Omega})$ with $1<r(z)<\infty$ for all $z\in \overline{\Omega}$, we denote by $\Delta_{r(z)}$  the $r(z)$-Laplace differential operator defined by
$$
\Delta_{r(z)} u={\rm div} \,(|Du|^{r(z)-2}Du) \mbox{ for all }u\in W^{1,r(z)}(\Omega).
$$

The potential function $\xi\in L^{\infty}(\Omega)$ satisfies $\xi(z)>0$ for a.a. $z\in\Omega$. In the reaction we have two terms. One is the singular term $x\mapsto x^{-\eta(z)}$ with $0<\eta(z)<1$ for all $z\in\overline{\Omega}$ and the other is a parametric perturbation $\lambda f(z,x)$ with $\lambda>0$ being the parameter.  {The function $f(z,x)$ is a Carath\'eodory function, that is, for all $x\in\RR$ the mapping $z\mapsto f(z,x)$ is measurable and for a.a. $z\in\Omega$ the function $x\mapsto f(z,x)$ is continuous.} We assume that for a.a. $z\in\Omega$, the function $f(z,\cdot)$ exhibits a ($p_+-1$)-superlinear growth near $+\infty$ with $p_+=\displaystyle{\max_{\overline{\Omega}}p}$, but without satisfying the so-called Ambrosetti-Rabinowitz condition (the $AR$-condition for short), which is common in the literature when dealing with superlinear problems. Instead, we use a less restrictive condition which incorporates in our framework superlinear nonlinearities with slower growth near $+\infty$.  {The precise hypotheses on $f(z,x)$ can be found in Section 2 (see hypotheses $H_1$).}

We are looking for positive solutions and our aim is to determine how the set of positive solutions changes as the parameter $\lambda>0$ varies. In this direction we prove a bifurcation-type result describing the changes in the set of positive solutions of $(P_\lambda)$ as the positive parameter $\lambda$ increases. We also show that if $\lambda>0$ is admissible (that is, problem $(P_\lambda)$ admits positive solutions), then there is a minimal positive solution $u_\lambda^*$ (that is, a smallest solution) and we examine the monotonicity and continuity of the map $\lambda\mapsto u_\lambda^*$.

Analogous studies for $p$-Laplacian equations with constant exponent, were conducted by Giacomoni, Schindler \& Taka\v c \cite{11Gia-Sch-Tak} and Papageorgiou \& Winkert \cite{19Pap-Win}. More general equations driven by nonhomogeneous differential operators, were considered recently by Papageorgiou, R\u adulescu \& Repov\v s \cite{prrpams, prrbull, prrzamp,16Pap-Rad-Rep}, Papageorgiou \& Scapellato \cite{paps}, Papageorgiou, Vetro \& Vetro \cite{18Pap-Vet-Vet}, Papageorgiou \& Zhang \cite{papz}, and Ragusa \& Tachikawa \cite{ragusa}. 
 {We should also mention the very recent works of De Filippis \& Mingione \cite{refe1} and Marcellini \cite{refe2}, on the regularity of solutions of double phase problems. This is a very interesting area with several issues remaining open and requiring further investigation. Finally, we mention the work of Bahrouni, R\u adulescu \& Winkert \cite{refe3} on a class of double phase problems with convection.}
Singular anisotropic equations driven by the $p(z)$-Laplacian, were studied by Byun \& Ko \cite{4Byu-Ko}, Zhang \& R\u{a}dulescu \cite{zhang}, and Saudi \& Ghanmi \cite{22Sau-Gha}. To the best of our knowledge, there are no works on singular anisotropic $(p,q)$-equations.

Boundary value problems driven by a combination of differential operators (such as $(p,q)$-equations) arise in many mathematical models of physical processes. We mention the historically first such work of  Cahn \& Hilliard \cite{5Cah-Hil}, which deals with the process of separation of binary alloys and the more recent works of Benci, D'Avenia, Fortunato \& Pisani \cite{3Ben-D'Av-For-Pis} on quantum physics, of Cherfils \& Ilyasov \cite{6Che-Ily} on reaction diffusion systems and of Bahrouni, R\u adulescu \& Repov\v s \cite{1Bah-Rad-Rep,2Bah-Rad-Rep} on transonic flow problems. Boundary value problems involving differential operators with variable exponents, are studied in the book of R\u adulescu \& Repov\v s \cite{21Rad-Rep}, while a comprehensive discussion of semilinear singular problems and a rich relevant bibliography can be found in the book of Ghergu \& R\u adulescu \cite{10Ghe-Rad}.

\section{Mathematical background and hypotheses}
 {Although as we already mentioned in the previous section, we require that our exponents $p(\cdot)$, $q(\cdot)$, $\eta(\cdot)$ are smooth (in order to exploit the existing anisotropic regularity theory), the introduction of the variable exponent spaces does not require such regularity restrictions.}

We introduce the following spaces
\begin{eqnarray*}
% \nonumber % Remove numbering (before each equation)
   && M(\Omega)=\{u:\Omega\to\RR \mbox{ measurable}\} \\
   && L_1^\infty(\Omega)=\{p\in L^\infty(\Omega):\:1\leq \underset{\Omega}{{\rm essinf}}\,p\}.
\end{eqnarray*}

As usual, we identify in $M(\Omega)$ two functions which differ only on a set of measure zero. If $p\in L_1^\infty(\Omega)$, then we set
$$
p_-=\underset{\Omega}{\rm essinf}\,p\  \mbox{ and }\ p_+=\underset{\Omega}{\rm esssup}\,p.
$$

Given $p\in L_1^\infty(\Omega)$, the variable exponent Lebesgue space $L^{p(z)}(\Omega)$ is defined by
$$
L^{p(z)}(\Omega)=\left\{u\in M(\Omega):\: \int_\Omega |u|^{p(z)}dz<\infty\right\}.
$$

We equip this space with the so-called ``Luxemburg norm" defined by
$$
\|u\|_{p(z)}=\inf\left\{\lambda>0:\: \int_\Omega \left(\frac{|u|}{\lambda}\right)^{p(z)}dz\leq1\right\}.
$$

Variable exponent Lebesgue spaces are similar to the classical Lebesgue spaces. More precisely, they are separable Banach spaces, they are reflexive if and only if $1<p_-\leq p_+<\infty$ (in fact, they are uniformly convex). Moreover, simple functions and continuous functions of compact support are dense in $L^{p(z)}(\Omega)$.

Suppose that $p,q\in L_1^\infty(\Omega)$. Then we have the following property:
$$
``L^{p(z)}(\Omega)\hookrightarrow L^{q(z)}(\Omega) \mbox{ continuously}
$$
$$
\mbox{ if and only if }
$$
$$
q(z)\leq p(z) \mbox{ for a.a. }z\in \Omega".
$$

Let $p,p'\in L_1^\infty(\Omega)$ such that $\frac{1}{p(z)}+\frac{1}{p'(z)}=1$ for a.a. $z\in\Omega$. Then $L^{p(z)}(\Omega)^*=L^{p'(z)}(\Omega)$ and the following H\"older-type inequality is true
$$
\int_\Omega |uv|dz\leq\left(\frac{1}{p_-}+\frac{1}{p'_-}\right)\|u\|_{p(z)}\|v\|_{p'(z)}
$$
for all $u\in L^{p(z)}(\Omega)$ and all $v\in L^{p'(z)}(\Omega)$.

Using the variable exponent Lebesgue spaces, we can define in the usual way variable exponent Sobolev spaces. So, if $p\in L_1^\infty(\Omega)$, then we set
$$
W^{1,p(z)}(\Omega)=\{u\in L^{p(z)}(\Omega):\:|Du|\in L^{p(z)}(\Omega)\}.
$$

This space is furnished with the following norm
$$
\|u\|=\|u\|_{p(z)}+\|\, |Du|\,\|_{p(z)}.
$$

Evidently, an equivalent norm is given by
$$
|u|=\inf\left\{\lambda>0:\:\int_\Omega \left[\left(\frac{|Du|}{\lambda}\right)^{p(z)}+\left(\frac{|u|}{\lambda}\right)^{p(z)}\right]dz\leq1\right\}.
$$

The anisotropic Sobolev space $W^{1,p(z)}(\Omega)$ is a separable Banach space and if $1<p_-\leq p_+<\infty$, then $W^{1,p(z)}(\Omega)$ is reflexive (in fact, uniformly convex). Note that $W^{1,p(z)}(\Omega)\hookrightarrow W^{1,p_-}(\Omega)$ continuously. Also, $W^{1,p(z)}_0(\Omega)$ is the closure of the set of $W^{1,p(z)}(\Omega)$-functions with compact support, that is, of the set
$$
\left\{u\in W^{1,p(z)}(\Omega):\:u=u\chi_K \mbox{ with } K\subseteq\Omega \mbox{ compact }\right\}.
$$

If $p\in C^1(\overline{\Omega})$, then $W^{1,p(z)}_0(\Omega)=\overline{C_c^\infty(\Omega)}^{\|\cdot\|}$.

We set
$$
p^*(z)=\left\{
         \begin{array}{ll}
           \frac{Np(z)}{N-p(z)}, & \hbox{ if }p(z)<N \\
            +\infty, & \hbox{ if } N\leq p(z).
         \end{array}
       \right.
$$

Suppose that $p,q\in L_1^\infty(\Omega)\cap C(\overline{\Omega})$ with $p_+<N$ and $q(z)\leq p^*(z)$ (resp., $q(z)<p^*(z)$) for all $z\in\overline{\Omega}$, then we have that $W^{1,p(z)}(\Omega)\hookrightarrow L^{q(z)}(\Omega)$ continuously (resp., compactly). A comprehensive presentation of variable exponent Lebesgue and Sobolov spaces can be found in the book of Diening, Harjulehto, H\"asto \& Ruzicka \cite{7Die-Har-Has-Ruz}.

Let $r\in L_1^\infty(\Omega)$ and consider the Lebesgue space $L^{r(z)}(\Omega)$. The modular function for this space is given by
$$
\rho_{r(z)}(u)=\int_\Omega |u|^{r(z)}dz.
$$

This function is basic in the study of $L^{r(z)}(\Omega)$ and is closely related to the norm $\|\cdot\|_{p(z)}$ introduced above. More specifically, we have the following result.

\begin{prop}\label{prop1}
  \begin{itemize}
    \item[(a)] For $u\in L^{r(z)}(\Omega)$, $u\not=0$, we have
$$
\|u\|_{r(z)}\leq \lambda\;\Leftrightarrow\;\rho_{r(z)}\left(\frac{u}{\lambda}\right)\leq1;
$$
    \item[(b)] $\|u\|_{r(z)}<1$ (resp. $=1$, $>1$) $\Leftrightarrow$ $\rho_{r(z)}(u)<1$ (resp. $=1$, $>1$);
    \item[(c)] $\|u\|_{r(z)}<1$ $\Rightarrow$ $\|u\|_{r(z)}^{r_+}\leq\rho_{r(z)}(u)\leq\|u\|_{r(z)}^{r_-}$,\\
               $\|u\|_{r(z)}>1$ $\Rightarrow$ $\|u\|_{r(z)}^{r_-}\leq\rho_{r(z)}(u)\leq\|u\|_{r(z)}^{r_+}$;
    \item[(d)] $\|u_n\|_{r(z)}\to0$ $\Leftrightarrow$ $\rho_{r(z)}(u_n)\to 0$;
    \item[(e)] $\|u_n\|_{r(z)}\to+\infty$ $\Leftrightarrow$ $\rho_{r(z)}(u_n)\to+\infty$.
  \end{itemize}
\end{prop}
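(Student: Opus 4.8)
The plan is to derive all five statements from part (a), which identifies $\|u\|_{r(z)}$ (for $u\neq 0$) with the unique positive root of $\lambda\mapsto\rho_{r(z)}(u/\lambda)-1$. First I would fix $u\in L^{r(z)}(\Omega)$ with $u\neq 0$ and analyze
$$\varphi(\lambda):=\rho_{r(z)}\!\left(\frac{u}{\lambda}\right)=\int_\Omega\lambda^{-r(z)}|u|^{r(z)}\,dz,\qquad\lambda>0.$$
Since $1\leq r_-\leq r_+<\infty$, for each $z$ the map $\lambda\mapsto\lambda^{-r(z)}|u(z)|^{r(z)}$ is continuous and strictly decreasing; hence by the monotone and dominated convergence theorems $\varphi$ is continuous and strictly decreasing on $(0,\infty)$, with $\varphi(\lambda)\to+\infty$ as $\lambda\to0^+$ (because $|\{u\neq0\}|>0$) and $\varphi(\lambda)\to0$ as $\lambda\to+\infty$. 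Thus there is a unique $\lambda_0>0$ with $\varphi(\lambda_0)=1$, and strict monotonicity gives $\{\lambda>0:\varphi(\lambda)\leq1\}=[\lambda_0,\infty)$, so by the definition of the Luxemburg norm $\|u\|_{r(z)}=\lambda_0$; in particular $\rho_{r(z)}(u/\|u\|_{r(z)})=1$. Part (a) then follows at once: $\|u\|_{r(z)}\leq\lambda\Leftrightarrow\lambda\geq\lambda_0\Leftrightarrow\varphi(\lambda)\leq1$.

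For part (b) I would take $\lambda=1$ in (a) to get $\|u\|_{r(z)}\leq1\Leftrightarrow\rho_{r(z)}(u)\leq1$, and combine the identity $\rho_{r(z)}(u/\|u\|_{r(z)})=1$ with the strict monotonicity of $\varphi$ to obtain $\|u\|_{r(z)}=1\Leftrightarrow\rho_{r(z)}(u)=1$; the case $u=0$ is trivial, and the three equivalences of (b) follow. For part (c), write $t=\|u\|_{r(z)}$, so that $\int_\Omega t^{-r(z)}|u|^{r(z)}\,dz=1$ by (a). When $0<t<1$ one has $t^{-r_-}\leq t^{-r(z)}\leq t^{-r_+}$, hence $t^{-r_-}\rho_{r(z)}(u)\leq1\leq t^{-r_+}\rho_{r(z)}(u)$, i.e. $\|u\|_{r(z)}^{r_+}\leq\rho_{r(z)}(u)\leq\|u\|_{r(z)}^{r_-}$; when $t>1$ the exponent inequalities reverse to $t^{-r_+}\leq t^{-r(z)}\leq t^{-r_-}$, giving $\|u\|_{r(z)}^{r_-}\leq\rho_{r(z)}(u)\leq\|u\|_{r(z)}^{r_+}$.

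Finally, (d) and (e) are immediate from (b) and (c). If $\|u_n\|_{r(z)}\to0$ then eventually $\|u_n\|_{r(z)}<1$ and $\rho_{r(z)}(u_n)\leq\|u_n\|_{r(z)}^{r_-}\to0$; conversely, if $\rho_{r(z)}(u_n)\to0$ then eventually $\rho_{r(z)}(u_n)<1$, so $\|u_n\|_{r(z)}<1$ by (b) and $\|u_n\|_{r(z)}^{r_+}\leq\rho_{r(z)}(u_n)\to0$. Part (e) is entirely analogous, now using $\|u_n\|_{r(z)}>1$ together with $r_+<\infty$. The only delicate point is part (a) — establishing the continuity, strict monotonicity and boundary limits of $\varphi$, and then verifying that the infimum in the definition of $\|\cdot\|_{r(z)}$ is attained at $\lambda_0$ — and this is precisely where the hypotheses $u\neq0$ and $1\leq r_-\leq r_+<\infty$ are used; once (a) is in place, parts (b)--(e) require only elementary estimates on the modular.
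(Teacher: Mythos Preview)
Your argument is correct and is the standard derivation of these norm--modular relations: identify $\|u\|_{r(z)}$ as the unique root of $\lambda\mapsto\rho_{r(z)}(u/\lambda)-1$ via strict monotonicity and the intermediate value theorem, then read off (b)--(e) from the identity $\rho_{r(z)}(u/\|u\|_{r(z)})=1$ together with the elementary bounds $t^{-r_-}\leq t^{-r(z)}\leq t^{-r_+}$ (or the reverse) according to whether $t<1$ or $t>1$.

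Note, however, that the paper does not prove this proposition at all: it is stated without proof as a known background fact about variable exponent Lebesgue spaces (the surrounding discussion points to the monograph of Diening, Harjulehto, H\"ast\"o and Ruzicka), so there is no original argument in the paper to compare yours against.
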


We consider the map $A_{r(z)}:W^{1,r(z)}(\Omega)\to W^{1,r(z)}(\Omega)^*$ defined by
$$
\langle A_{r(z)}(u),h\rangle=\int_\Omega |Du|^{r(z)-2}(Du,Dh)_{\RR^N}dz \mbox{ for all }u,h \in W^{1,r(z)}(\Omega).
$$

This map has the following properties (see Gasinski \& Papageorgiou \cite[Proposition 2.5]{9Gas-Pap}).

\begin{prop}\label{prop2}
  The map $A_{r(z)}:W^{1,p(z)}(\Omega)\to W^{1,p(z)}(\Omega)^*$ is bounded (that is, maps bounded sets to bounded sets), continuous, monotone, hence also maximal monotone  and of type $(S)_+$, that is,
$$
``u_n\overset{w}{\to}u \mbox{ in }W^{1,r(z)}(\Omega),\;\limsup_{n\to\infty}\langle A_{r(z)}(u_n),u_n-u\rangle\leq0\Rightarrow u_n\to u \mbox{ in } W^{1,r(z)}(\Omega)."
$$
\end{prop}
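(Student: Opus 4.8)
We outline how Proposition~\ref{prop2} is established; we write $r(\cdot)$ throughout for the exponent appearing in $A_{r(z)}$. The plan is to verify the properties in the order: boundedness, monotonicity, continuity, maximal monotonicity, and finally the $(S)_+$ condition, the last being the only substantial point.

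\emph{Boundedness, monotonicity and continuity.} For $u,h\in W^{1,r(z)}(\Omega)$ the anisotropic H\"older inequality gives
$$|\langle A_{r(z)}(u),h\rangle|\leq\int_\Omega|Du|^{r(z)-1}|Dh|\,dz\leq c_1\big\|\,|Du|^{r(z)-1}\,\big\|_{r'(z)}\,\|\,|Dh|\,\|_{r(z)};$$
since $(r(z)-1)r'(z)=r(z)$, Proposition~\ref{prop1}(c) bounds $\|\,|Du|^{r(z)-1}\,\|_{r'(z)}$ by a power of $\|u\|$, so $A_{r(z)}$ maps bounded sets to bounded sets. Monotonicity follows by integrating over $\Omega$, with $t=r(z)$, $\xi=Du(z)$, $\zeta=Dv(z)$, the elementary inequality $(|\xi|^{t-2}\xi-|\zeta|^{t-2}\zeta,\xi-\zeta)_{\RR^N}\geq0$, valid for all $\xi,\zeta\in\RR^N$ and $t>1$; this yields $\langle A_{r(z)}(u)-A_{r(z)}(v),u-v\rangle\geq0$. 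For continuity, if $u_n\to u$ in $W^{1,r(z)}(\Omega)$ then $Du_n\to Du$ in $L^{r(z)}(\Omega;\RR^N)$, so along a subsequence $Du_n\to Du$ a.e.\ with $|Du_n|\leq g$ a.e.\ for some $g\in L^{r(z)}(\Omega)$; hence $|Du_n|^{r(z)-2}Du_n\to|Du|^{r(z)-2}Du$ a.e.\ and $\big|\,|Du_n|^{r(z)-2}Du_n\,\big|\leq g^{r(z)-1}\in L^{r'(z)}(\Omega)$, and the dominated convergence theorem (in its modular form, Proposition~\ref{prop1}(d)) gives $|Du_n|^{r(z)-2}Du_n\to|Du|^{r(z)-2}Du$ in $L^{r'(z)}(\Omega;\RR^N)$; H\"older then transfers this to $A_{r(z)}(u_n)\to A_{r(z)}(u)$ in $W^{1,r(z)}(\Omega)^*$. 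Since the limit is independent of the subsequence, $A_{r(z)}$ is continuous on all of $W^{1,r(z)}(\Omega)$. Being monotone, everywhere defined and continuous on the reflexive Banach space $W^{1,r(z)}(\Omega)$, $A_{r(z)}$ is then maximal monotone by the classical criterion.

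\emph{The $(S)_+$ property.} Let $u_n\overset{w}{\to}u$ in $W^{1,r(z)}(\Omega)$ with $\limsup_{n\to\infty}\langle A_{r(z)}(u_n),u_n-u\rangle\leq0$. Since $u_n-u\overset{w}{\to}0$ we have $\langle A_{r(z)}(u),u_n-u\rangle\to0$, and monotonicity gives $\langle A_{r(z)}(u_n)-A_{r(z)}(u),u_n-u\rangle\geq0$; combining these forces $\langle A_{r(z)}(u_n),u_n-u\rangle\to0$, hence
$$\int_\Omega\big(|Du_n|^{r(z)-2}Du_n-|Du|^{r(z)-2}Du,\ Du_n-Du\big)_{\RR^N}\,dz\longrightarrow0.$$
The integrand $e_n(z)$ is nonnegative, and the sharp pointwise monotonicity inequalities bound it from below by $c\,|Du_n-Du|^{r(z)}$ on $\{r(\cdot)\geq2\}$ and by $c\,|Du_n-Du|^{2}\,(1+|Du_n|+|Du|)^{r(z)-2}$ on $\{1<r(\cdot)<2\}$. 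On the first set one estimates directly; on the second one applies the variable-exponent H\"older inequality with exponents $2/r(z)$ and $2/(2-r(z))$, using that $\rho_{r(z)}(Du_n)$ is bounded because $\{u_n\}$ is bounded. This yields $\rho_{r(z)}(Du_n-Du)\to0$, i.e.\ $Du_n\to Du$ in $L^{r(z)}(\Omega;\RR^N)$ by Proposition~\ref{prop1}(d). Finally, the compact embedding $W^{1,r(z)}(\Omega)\hookrightarrow L^{r(z)}(\Omega)$ together with $u_n\overset{w}{\to}u$ gives $u_n\to u$ in $L^{r(z)}(\Omega)$, whence $u_n\to u$ in $W^{1,r(z)}(\Omega)$, which is the $(S)_+$ property.

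\emph{Where the difficulty lies.} Every step except the last is formal. The delicate point is the passage from $\int_\Omega e_n\,dz\to0$ to strong convergence of the gradients: one cannot directly invoke the constant-exponent argument, since in $L^{r(z)}(\Omega)$ the modular $\rho_{r(z)}$ and the norm are not comparable by a fixed power, so the region $\{1<r(\cdot)<2\}$ must be handled through a genuinely anisotropic H\"older estimate and the conclusion recovered via Proposition~\ref{prop1}. An equivalent route is to deduce $\rho_{r(z)}(Du_n)\to\rho_{r(z)}(Du)$ (identifying the weak limit of $A_{r(z)}(u_n)$ by maximal monotonicity) and then combine it with $Du_n\overset{w}{\to}Du$ and the uniform convexity (Radon--Riesz property) of $L^{r(z)}(\Omega)$.
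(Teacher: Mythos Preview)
Your argument is correct and follows the standard route for establishing these properties in the variable-exponent setting. Note, however, that the paper does not actually prove Proposition~\ref{prop2}: it simply records the result and attributes it to Gasi\'nski \& Papageorgiou \cite[Proposition~2.5]{9Gas-Pap}. What you have written is, in effect, a reconstruction of that reference's proof, and it is carried out cleanly. The only place to be slightly more careful is the anisotropic H\"older step on $\{1<r(\cdot)<2\}$: the exponent $2/(2-r(z))$ is unbounded near $\{r=2\}$, so in full rigor one either restricts to $\{r\leq 2-\varepsilon\}$ and lets $\varepsilon\to0$, or---as you indicate in your final paragraph---bypasses the splitting entirely by showing $\rho_{r(z)}(Du_n)\to\rho_{r(z)}(Du)$ and invoking the uniform convexity of $L^{r(z)}(\Omega)$. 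Either route closes the argument without difficulty.
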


In addition to the variable exponent spaces, we will use the Banach space $C^1(\overline{\Omega})$. This is an ordered Banach space with positive cone $C_+=\{u\in C^1(\overline{\Omega}):\:u(z)\geq0 \mbox{ for all }z\in\overline{\Omega}\}$. This cone has nonempty interior given by
$$
{\rm int}\,C_+=\{u\in C_+:\:u(z)>0 \mbox{ for all }z\in\overline{\Omega}\}.
$$

We will also use another open cone in $C^1(\overline{\Omega})$ given by
$$
D_+=\left\{u\in C^1(\overline{\Omega}):\:u(z)>0 \mbox{ for all }z\in\Omega,\ \frac{\partial u}{\partial n}\Big|_{\partial\Omega \cap u^{-1}(0)}<0\right\},
$$
with $n(\cdot)$ being the outward unit normal on $\partial\Omega$.

Combining the proofs of Proposition 2.5 of \cite{17Pap-Rad-Rep} and of Proposition 6 in \cite{16Pap-Rad-Rep} we have the following strong comparison principle.

\begin{prop}\label{prop3}
  If $p,q,\eta\in C^1(\overline{\Omega})$, $1<q_-\leq q_+<p_-\leq p_+$, $0<\eta(z)<1$ for all $z\in\overline{\Omega}$, $\hat{\xi},h,g\in L^\infty(\Omega)$, $\hat{\xi}(z)\geq0$ for a.a. $z\in\Omega$, $0<\mu\leq g(z)-h(z)$ for a.a. $z\in\Omega$ and $u,v\in C^1(\overline{\Omega})$ satisfy $0\leq u\leq v$ and
\begin{eqnarray*}
% \nonumber % Remove numbering (before each equation)
  && -\Delta_{p(z)}u-\Delta_{q(z)}u +\hat{\xi}(z) u^{p(z)-1}-u^{-\eta(z)}=h(z) \mbox{ in }\Omega,\\
  && -\Delta_{p(z)}v-\Delta_{q(z)}v +\hat{\xi}(z) v^{p(z)-1}-v^{-\eta(z)}=g(z) \mbox{ in }\Omega,
\end{eqnarray*}
then $v-u\in D_+$.
\end{prop}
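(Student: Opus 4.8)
The statement is a strong comparison principle: given $0\le u\le v$ satisfying the two singular anisotropic $(p,q)$-equations with the right-hand sides ordered by a positive gap $\mu$, we must conclude $v-u\in D_+$, i.e. $v-u>0$ in $\Omega$ and the normal derivative is strictly negative wherever $v-u$ vanishes on $\partial\Omega$. The natural route is to exploit the ordered-pair structure and reduce to an inequality for the difference that puts us in a position to apply the anisotropic strong maximum principle and boundary-point lemma. First I would subtract the two equations and rewrite the principal part as a single monotone operator acting on $u$ and on $v$; using $0\le u\le v$ together with $0<\mu\le g(z)-h(z)$, I would show that $w:=v-u\ge 0$ satisfies a differential inequality of the form
\begin{equation*}
-\,\mathrm{div}\,\big(a(z,Dv)-a(z,Du)\big)+\hat\xi(z)\big(v^{p(z)-1}-u^{p(z)-1}\big)-\big(v^{-\eta(z)}-u^{-\eta(z)}\big)=g(z)-h(z)\ge\mu>0,
\end{equation*}
where $a(z,\xi)=|\xi|^{p(z)-2}\xi+|\xi|^{q(z)-2}\xi$. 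The key observation is that the singular term $-(v^{-\eta(z)}-u^{-\eta(z)})$ is $\ge 0$ since $t\mapsto t^{-\eta(z)}$ is decreasing and $v\ge u$; hence it can be dropped to the favorable side, and similarly the zero-order term $\hat\xi(z)(v^{p(z)-1}-u^{p(z)-1})\ge 0$ contributes with the right sign. This is exactly where the hypothesis $\hat\xi\ge 0$ and the monotonicity of the singular nonlinearity are used.

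**Main steps.** Having isolated the strictly positive lower bound $\mu$, I would next linearize the principal part along the segment joining $Du$ and $Dv$: writing $a(z,Dv)-a(z,Du)=\big(\int_0^1 D_\xi a(z,Du+t(Dv-Du))\,dt\big)Dw$, the quantity in parentheses is a measurable, locally bounded, uniformly elliptic matrix field on the (open) set where $|Du|+|Dv|>0$, with ellipticity constants controlled since $u,v\in C^1(\overline\Omega)$. Thus $w$ is a nonnegative supersolution of a uniformly elliptic divergence-form equation with a strictly positive right-hand side, and the anisotropic regularity/maximum principle machinery of \cite{17Pap-Rad-Rep, 16Pap-Rad-Rep} applies: the strong maximum principle forces $w>0$ throughout $\Omega$ (it cannot vanish at an interior point, since there the right-hand side $\ge\mu$ would be incompatible with a minimum), and at any boundary point $z_0\in\partial\Omega$ with $w(z_0)=0$ the Hopf boundary-point lemma gives $\frac{\partial w}{\partial n}(z_0)<0$. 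Combining these two facts is precisely the definition of membership in $D_+$, which finishes the argument. Concretely, I would cite Proposition 2.5 of \cite{17Pap-Rad-Rep} for the interior positivity and Proposition 6 of \cite{16Pap-Rad-Rep} for the boundary behavior, noting that their proofs carry over verbatim once the inequality above is established.

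**The main obstacle.** The delicate point is the degeneracy of the operator at points where $Du$ and $Dv$ vanish simultaneously: there the linearized matrix $\int_0^1 D_\xi a(\cdot)\,dt$ degenerates and the uniform ellipticity used in Hopf's lemma fails. The standard remedy, which I would follow, is a two-regime analysis: on the open set $\{|Du|+|Dv|>0\}$ we are genuinely uniformly elliptic and the above argument runs; on the critical set $\{Du=Dv=0\}$ one uses instead the lower-order structure—here the term $\hat\xi(z)(v^{p(z)-1}-u^{p(z)-1})$ together with the strict gap $g-h\ge\mu$ prevents $w$ from touching zero, by a direct pointwise/comparison argument analogous to the one in \cite{16Pap-Rad-Rep}. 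A secondary technical subtlety is that the singular term $u^{-\eta(z)}$ need only lie in $L^\infty$ away from $\{u=0\}$, but since $u,v\in C^1(\overline\Omega)$ and we are comparing on a set where $v\ge u$, any blow-up of $u^{-\eta}$ is matched or dominated by that of $v^{-\eta}$ with the correct sign, so it never obstructs the estimate. Once these two regimes are handled, invoking the cited propositions closes the proof.
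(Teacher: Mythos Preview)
Your proposal is correct and aligns with the paper's own treatment: the paper does not give a detailed proof of this proposition but simply states (just before the statement) that it follows by ``combining the proofs of Proposition 2.5 of \cite{17Pap-Rad-Rep} and of Proposition 6 in \cite{16Pap-Rad-Rep}.'' Your sketch is precisely a fleshing-out of that combination---subtracting the equations, exploiting the monotonicity of the singular and zero-order terms to isolate the strict gap $\mu$, and then invoking the anisotropic strong maximum principle and boundary-point lemma from those two references---so there is nothing to add.
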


If $u,v\in W^{1,p(z)}(\Omega)$ $(p\in L_1^\infty(\Omega))$ with $u\leq v$, then we define
$$
[u,v]=\left\{h\in W^{1,p(z)}(\Omega):\:u(z)\leq h(z)\leq v(z) \mbox{ for a.a. }z\in\Omega\right\}
$$
and
$$
[u)=\left\{h\in W^{1,p(z)}(\Omega):\:u(z)\leq h(z) \mbox{ for a.a. }z\in \Omega\right\}.
$$

If $X$ is a Banach space and $\varphi\in C^1(X,\RR)$, then we denote by $K_\varphi$ the critical set of $\varphi$, that is, the set
$$
K_\varphi=\{u\in X:\:\varphi'(u)=0\}.
$$

Also, we say that $\varphi\in C^1(X,\RR)$ satisfies the ``$C$-condition", if the following property holds:
$$
``\mbox{ Every sequence $\{u_n\}_{n\geq1}\subseteq X$ such that }
\left\{\varphi(u_n)\right\}_{n\geq1}\subseteq\RR \mbox{ is bounded and}$$
$$\mbox{ and } (1+\|u_n\|_X)\varphi'(u_n)\to0 \mbox{ in }X^* \mbox{ as } n\to \infty,
\mbox{ admits a strongly convergent subsequence".}
$$

This is a compactness-type condition on the functional $\varphi(\cdot)$. It compensates for the fact that $X$ is not locally compact, being in general infinite dimensional. The $C$-condition plays a crucial role in the minimax theory of the critical values of the functional $\varphi(\cdot)$.

Now we are ready to introduce the hypotheses on the data of $(P_\lambda)$.

\smallskip
$H_0$: $p,q,\eta\in C^1(\overline{\Omega})$, $1<q_-\leq q_+<p_-\leq p_+$, $0<\eta(z)<1$ for all $z\in\overline{\Omega}$, $\xi\in L^\infty(\Omega)$, $\xi(z)>0$ for a.a. $z\in\Omega$.

\smallskip
$H_1$: $f:\Omega\times\RR\to \RR$ is a Carath\' eodory function (that is, $f(z,x)$ is measurable in $z\in\Omega$ and continuous in $x\in\RR$) such that $f(z,0)=0$ for a.a. $z\in \Omega$ and
\begin{itemize}
  \item[$(i)$] $0\leq f(z,x)\leq a(z)[1+x^{r(z)-1}]$ for a.a. $z\in\Omega$, all $x\geq0$ with $a\in L^\infty(\Omega)$ and $r\in C(\overline{\Omega})$ with $p_+<r_-\leq r_+<p^*(z)$ for all $z\in\overline{\Omega}$;
  \item[$(ii)$] if $F(z,x)=\displaystyle{\int_0^x f(z,s)ds}$,
then $\displaystyle{\lim_{x\to+\infty}\frac{F(z,x)}{x^{p_+}}=+\infty}$ uniformly for a.a. $z\in \Omega$;
  \item[$(iii)$] if $\xi_\lambda(z,x)=\left(1-\frac{p_+}{1-\eta(z)}\right)x^{1-\eta(z)}+\lambda\left[f(z,x)x-p_+F(z,x)\right]$,
then there exists $\hat{\vartheta}_\lambda\in L^1(\Omega)$ such that
$$
\xi_\lambda(z,x)\leq \xi_\lambda (z,y)+\hat{\vartheta}_\lambda(z) \mbox{ for a.a. }z\in\Omega, \mbox{ all }0\leq x\leq y;
$$
  \item[$(iv)$] for every $s>0$, we can find $\mu_s>0$ such that
$$
0<\hat{\mu}_s\leq f(z,x) \mbox{ for a.a. }z\in\Omega, \mbox{ all }s\leq x
$$
$$
\mbox{ and } 0<\hat{c}\leq \liminf_{x\to0^+}\frac{f(z,x)}{x^{q_+-1}} \mbox{ uniformly for a.a. }z\in\Omega;
$$
  \item[$(v)$] for every $\rho>0$, there exists $\hat{\xi}_\rho>0$ such that for a.a. $z\in\Omega$, the function
$$
x\mapsto f(z,x)+\hat{\xi}_\rho x^{p(z)-1}
$$
is nondecreasing on $[0,\rho]$.
\end{itemize}
{\bf {\em Remarks.}} Since we are looking for positive solutions and all the above hypotheses concern the positive semiaxis $\RR_+=[0,\infty)$, without any loss of generality, we may assume that
\begin{equation}\label{eq1}
  f(z,x)=0 \mbox{ for a.a. }z\in\Omega, \mbox{ all }x\leq0.
\end{equation}

On account of hypotheses $H_1(ii),(iii)$ we see that for a.a. $z\in\Omega$, $f(z,\cdot)$ is $(p_+-1)$-superlinear. However the superlinearity property of $f(z,\cdot)$ is not expressed in terms of $AR$-condition. We recall that in the present anisotropic setting the $AR$-condition says that there exist $\vartheta>p_+$ and $M>0$ such that
\begin{equation}\label{eq2a}
   0<\vartheta F(z,x)\leq f(z,x)x \mbox{ for a.a. }z\in\Omega, \mbox{ all }x\geq M,
\end{equation}
\begin{equation}\label{eq2b}
   0<\underset{\Omega}{{\rm essinf}} F(\cdot,M).
\end{equation}

In fact this is a unilateral version of the $AR$-condition due to \eqref{eq1}. Integrating \eqref{eq2a} and using \eqref{eq2b}, we obtain the weaker condition
\begin{eqnarray*}
% \nonumber % Remove numbering (before each equation)
   &&C_0 x^\vartheta\leq F(z,x) \mbox{ for a.a. }z\in\Omega, \mbox{ all }x\geq M, \mbox{ some }C_0>0,  \\
  &\Rightarrow& C_0 x^{\vartheta-1}\leq f(z,x) \mbox{ for a.a. }z\in\Omega, \mbox{ all }x\geq M.
\end{eqnarray*}

So, the $AR$-condition implies that $f(z,\cdot)$ eventually has $(\vartheta-1)$-polynomial growth. Here we replace the $AR$-condition by the quasimonotonicity hypothesis $H_1(iii)$. This hypothesis is a slightly more general version of a condition used by Li \& Yang \cite{14Li-Yan}. Note that  there exists $M>0$ such that for a.a. $z\in\Omega$
\begin{eqnarray*}
% \nonumber % Remove numbering (before each equation)
  && x\mapsto \frac{x^{-\eta(z)}+\lambda f(z,x)}{x^{p_+-1}} \mbox{ is nondecreasing on }x\geq M  \\
  &\mbox{ or }& x\mapsto d_\lambda(z,x) \mbox{ is nondecreasing on }x\geq M.
\end{eqnarray*}
{\bf {\em Examples.}} Consider the following two functions
$$
f_1(z,x)=(x^+)^{r(z)-1} \mbox{ and } f_2(z,x)=(x^+)^{p(z)-1}\ln(1+x^+).
$$

Both functions satisfy hypothesis $H_1$, but only $f_1$ satisfies the $AR$-condition.

\medskip
By $L^{p(z)}(\Omega,\xi)$ we will denote the weighted $L^{p(z)}$-space with weight $\xi(\cdot)$. Therefore
$$
L^{p(z)}(\Omega,\xi)=\left\{u\in M(\Omega):\:\int_\Omega \xi(z)|u|^{p(z)}dz<\infty\right\}.
$$

This space is furnished with the norm
$$
\|u\|_{L^{p(z)}(\Omega,\xi)}=\inf\left\{\lambda>0:\:\int_{\Omega}\xi(z)\left|\frac{u}{\lambda}
\right|^{p(z)}dz\leq1\right\}.
$$

Note that since by hypothesis $\xi(z)>0$ for a.a. $z\in\Omega$ (see hypothesis $H_0$), the function
$$
\rho_{p(z),\xi}(u)=\int_\Omega \xi(z)|u|^{p(z)}dz,
$$
is a modular function (see Diening, Harjulehto, H\"asto \& Ruzicka \cite[Definition 2.1.1, p. 20]{7Die-Har-Has-Ruz}).

On $W^{1,p(z)}(\Omega)$ we consider the norm $\|\cdot\|$ defined earlier and a new norm given by
$$
|u|=\|Du\|_{p(z)}+\|u\|_{L^{p(z)}(\Omega,\xi)}.
$$

\begin{prop}\label{prop4}
  If hypothesis $H_0$ holds, then $\|\cdot\|$ and $|\cdot|$ are equivalent norms on $W^{1,p(z)}(\Omega)$.
\end{prop}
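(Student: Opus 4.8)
The plan is to prove the equivalence by establishing a two-sided estimate
$$c_1\,\|u\|\le|u|\le c_2\,\|u\|\qquad\text{for all }u\in W^{1,p(z)}(\Omega)$$
with suitable constants $c_1,c_2>0$. Observe first that $|\cdot|$ is positively homogeneous and subadditive, being the sum of the seminorm $\|D\cdot\|_{p(z)}$ and of the Luxemburg norm attached to $\rho_{p(z),\xi}$ (which is a genuine norm precisely because $\xi>0$ a.a.\ on $\Omega$, so that $\rho_{p(z),\xi}$ is a modular function); once the lower bound $c_1\|u\|\le|u|$ is in hand, it follows in addition that $|\cdot|$ separates points and that $(W^{1,p(z)}(\Omega),|\cdot|)$ is a Banach space, so that ``equivalent norms'' is the whole statement. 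The upper bound is elementary: since $\xi\in L^\infty(\Omega)$ we have $\rho_{p(z),\xi}(v)\le\|\xi\|_\infty\,\rho_{p(z)}(v)$ for every $v\in W^{1,p(z)}(\Omega)$, and combining this with the modular--norm relations of Proposition \ref{prop1} (after normalizing so that $\|v\|_{p(z)}=1$, where $\rho_{p(z)}(v)=1$) gives $\|v\|_{L^{p(z)}(\Omega,\xi)}\le\max\{1,\|\xi\|_\infty^{1/p_-}\}\,\|v\|_{p(z)}$; since trivially $\|Dv\|_{p(z)}\le\|v\|$, the bound $|v|\le c_2\|v\|$ follows.

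For the lower bound I would argue by contradiction. If it fails, there is a sequence $\{u_n\}_{n\ge1}\subseteq W^{1,p(z)}(\Omega)$ with $\|u_n\|=1$ and $|u_n|\to0$, hence $\|Du_n\|_{p(z)}\to0$, $\|u_n\|_{L^{p(z)}(\Omega,\xi)}\to0$, and therefore $\|u_n\|_{p(z)}\to1$. As $\{u_n\}$ is bounded in the reflexive space $W^{1,p(z)}(\Omega)$, along a subsequence $u_n\overset{w}{\to}u$ in $W^{1,p(z)}(\Omega)$, and by the compact embedding $W^{1,p(z)}(\Omega)\hookrightarrow L^{p(z)}(\Omega)$ (recall $p(z)<p^*(z)$ for all $z\in\overline\Omega$) we may also assume $u_n\to u$ in $L^{p(z)}(\Omega)$ and $u_n(z)\to u(z)$ for a.a.\ $z\in\Omega$. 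From $Du_n\overset{w}{\to}Du$ in $L^{p(z)}(\Omega,\RR^N)$ and the weak lower semicontinuity of the norm, $\|Du\|_{p(z)}\le\liminf_n\|Du_n\|_{p(z)}=0$, so $Du=0$ and, $\Omega$ being connected, $u\equiv c$ for some constant $c$. On the one hand, $u_n\to c$ in $L^{p(z)}(\Omega)$ together with $\|u_n\|_{p(z)}\to1$ forces $c\neq0$, since otherwise $\|u_n\|_{p(z)}=\|u_n-c\|_{p(z)}\to0$. On the other hand, $\|u_n\|_{L^{p(z)}(\Omega,\xi)}\to0$ yields $\rho_{p(z),\xi}(u_n)\to0$ (arguing as in Proposition \ref{prop1}(d) for the modular $\rho_{p(z),\xi}$), so Fatou's lemma along the a.a.-convergent subsequence gives
$$\int_\Omega\xi(z)\,|c|^{p(z)}\,dz=\rho_{p(z),\xi}(c)\le\liminf_n\rho_{p(z),\xi}(u_n)=0.$$
Since $\xi(z)>0$ for a.a.\ $z\in\Omega$, this forces $c=0$, a contradiction. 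Hence $c_1>0$ exists and the two norms are equivalent.

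The crucial --- and only genuinely delicate --- step is the last one, and it is here that the hypothesis $\xi>0$ a.a.\ on $\Omega$ is really used: a sequence whose gradient part vanishes converges (strongly in $L^{p(z)}(\Omega)$, thanks to the compact Sobolev embedding) to a constant, and although $\xi$ may be arbitrarily small on subsets of $\Omega$ --- so that $\|\cdot\|_{L^{p(z)}(\Omega,\xi)}$ cannot be controlled pointwise from below by a multiple of $\|\cdot\|_{p(z)}$ --- its a.e.\ positivity is exactly what is needed to annihilate the limiting constant via Fatou's lemma once both the gradient part and the weighted part have been sent to zero. One could alternatively invoke the open mapping theorem once $(W^{1,p(z)}(\Omega),|\cdot|)$ is known to be complete, but verifying that completeness directly requires essentially the same estimate, so nothing is gained.
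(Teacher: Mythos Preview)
Your proof is correct and follows essentially the same route as the paper's: the upper bound $|u|\le c_2\|u\|$ is immediate from $\xi\in L^\infty(\Omega)$, and the nontrivial lower bound is obtained by contradiction, extracting a sequence whose gradient and weighted parts vanish, passing to a constant limit via the compact embedding $W^{1,p(z)}(\Omega)\hookrightarrow L^{p(z)}(\Omega)$, and then ruling out that constant using $\xi>0$ a.a. The only cosmetic differences are that the paper normalises $\|u_n\|_{p(z)}=1$ rather than $\|u_n\|=1$, and in the final step uses convergence of the weighted modular (from strong $L^{p(z)}$-convergence and $\xi\in L^\infty$) rather than Fatou's lemma; neither changes the substance.
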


\begin{proof}
  From the definitions of the two norms, we have
$$
|u|\leq C_1 \|u\| \mbox{ for some } C_1>0, \mbox{ all }u\in W^{1,p(z)}(\Omega).
$$

{\it
Claim}. There exists $C_2>0$ such that $\|u\|_{p(z)}\leq C_2 |u|$ for all $u\in W^{1,p(z)}(\Omega)$.

We argue indirectly. So, suppose that the claim is not true. Then we can find $\{u_n\}_{n\geq1}\subseteq W^{1,p(z)}(\Omega)$ such that
$$
\|u_n\|_{p(z)}>n|u_n| \mbox{ for all }n\in \NN.
$$

Normalizing in $L^{p(z)}(\Omega)$, we see that we have
\begin{eqnarray}\nonumber
% \nonumber % Remove numbering (before each equation)
  && |u_n|<\frac{1}{n} \mbox{ for all }n\in\NN, \\ \nonumber
  &\Rightarrow& |u_n|\to0, \\
  &\Rightarrow& \|Du_n\|_{p(z)}\to0 \mbox{ and } \|u_n\|_{L^{p(z)}(\Omega,\xi)}\to 0\mbox{ as }n\to\infty. \label{eq3}
\end{eqnarray}

Evidently $\{u_n\}_{n\geq1}\subseteq W^{1,p(z)}(\Omega)$ is bounded. So, by passing to a suitable subsequence if necessary, we may assume that
\begin{eqnarray}\nonumber
% \nonumber % Remove numbering (before each equation)
   && u_n\overset{w}{\to}u \mbox{ in } W^{1,p(z)}(\Omega) \mbox{ and } u_n\to u\mbox{ in } L^{p(z)}(\Omega), \\
   &\Rightarrow& \|u\|_{p(z)}=1. \label{eq4}
\end{eqnarray}

From \eqref{eq3} and \eqref{eq4}, we have $u=\tilde{c}\in\RR\setminus\{0\}$ and
$$
\int_\Omega \xi(z)|u_n|^{p(z)}dz\to \int_\Omega \xi(z)|\tilde{c}|^{p(z)}dz.
$$

On account of \eqref{eq3}, we have
\begin{itemize}
  \item If $0<|\tilde{c}|\leq 1$, then $\displaystyle{0<|\tilde{c}|^{p_+}\int_\Omega \xi(z)dz\leq0}$, a contradiction.
  \item If $1<|\tilde{c}|$, then $\displaystyle{ 0\leq|\tilde{c}|^{p_-}\int_\Omega \xi(z)dz\leq0}$, a contradiction.
\end{itemize}

This proves the claim.

Using the claim and the definitions of the two norms, we conclude that $\|\cdot\|$ and $|\cdot|$ are equivalent.
\end{proof}

In what follows, we denote by $\gamma_{p(z)}:W^{1,p(z)}(\Omega)\to\RR$ the $C^1$-functional defined by
$$
\gamma_{p(z)}(u)=\int_\Omega\frac{1}{p(z)}|Du|^{p(z)}dz+\int_\Omega\frac{\xi(z)}{p(z)}|u|^{p(z)}dz \mbox{ for every }u\in W^{1,p(z)}(\Omega).
$$

For every $\lambda>0$ the energy functional $\varphi_\lambda:W^{1,p(z)}(\Omega)\to\RR$ for problem $(P_\lambda)$, is given by
$$
\varphi_\lambda(u)=\gamma_{p(z)}(u)+\int_\Omega \frac{1}{q(z)}|Du|^{q(z)}dz-\int_\Omega \frac{1}{1-\eta(z)}(u^+)^{1-\eta(z)}dz -\int_\Omega F(z,u^+)dz
$$
for all $u\in W^{1,p(z)}(\Omega)$.

On account of the third term, this functional is not $C^1$ and so the minimax theorems from the critical point theory are not directly applicable to this functional. For this reason we use truncation techniques in order to bypass the singularity and have $C^1$-functionals on which the critical point theory applies. For this reason in the next section, we deal with a purely singular problem.

Finally, we mention that, as usual, by a solution of $(P_\lambda)$, we understand a function $u\in W^{1,p(z)}(\Omega)$ such that
\begin{eqnarray*}
% \nonumber % Remove numbering (before each equation)
   && u\geq0,u\not\equiv0,u^{-\eta(\cdot)}h\in L^1(\Omega) \mbox{ for all }h\in W^{1,p(z)}(\Omega) \\
  &\mbox{and }&  \\
   && \langle A_{p(z)}(u),h\rangle +\langle A_{q(z)}(u),h\rangle+\int_\Omega \xi(z)u^{p(z)-1}hdz \\
   &=& \int_\Omega \left[u^{-\eta(z)}+\lambda f(z,u)\right]hdz \mbox{ for all }h\in W^{1,p(z)}(\Omega).
\end{eqnarray*}

\section{A purely singular problem}

In this section we deal with the following purely singular problem
\begin{equation}\label{eq5}
  \left\{%an
\begin{array}{lll}
-\Delta_{p(z)}u(z)-\Delta_{q(z)}u(z)+\xi(z)u(z)^{p(z)-1}=u(z)^{-\eta(z)} \text{ in } \Omega,\\
\di\frac{\partial u}{\partial n}=0 \mbox{ on }\partial\Omega,\ u>0.
\end{array}
\right.
\end{equation}

%\label{5varepsilon}

To solve \eqref{eq5}, we first consider a perturbation of \eqref{eq5} which removes the singularity. So, we consider the following approximation of problem \eqref{eq5}:
\begin{equation}\tag{$8_\varepsilon$}\label{eq8ep}
  \left\{
\begin{array}{lll}
-\Delta_{p(z)}u(z)-\Delta_{q(z)}u(z)+\xi(z)u(z)^{p(z)-1}=[u(z)+\varepsilon]^{-\eta(z)} \text{ in } \Omega,\\
\di\frac{\partial u}{\partial n}=0 \mbox{ on }\partial\Omega,\ u>0.
\end{array}
\right.
\end{equation}

We solve this problem using a topological approach (fixed point theory). So, given $g\in L^{p(z)}(\Omega)$, $g\geq0$ and $\varepsilon\in(0,1]$, we consider the following problem:
\begin{equation}\label{eq6}
  \left\{%an
\begin{array}{lll}
-\Delta_{p(z)}u(z)-\Delta_{q(z)}u(z)+\xi(z)u(z)^{p(z)-1}=[g(z)+\varepsilon]^{-\eta(z)} \text{ in } \Omega,\\
\di\frac{\partial u}{\partial n}=0 \mbox{ on }\partial\Omega,\ u>0.
\end{array}
\right.
\end{equation}

For this problem we have the following result.

\begin{prop}\label{prop5}
  If hypotheses $H_0$ hold, problem \eqref{eq6} admits a unique solution $\tilde{u}_\varepsilon\in{\rm int}\,C_+$.
\end{prop}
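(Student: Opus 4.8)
The plan is to solve \eqref{eq6} by monotone operator methods and then to bootstrap the regularity of the (unique) solution. Put $h_\varepsilon(z):=[g(z)+\varepsilon]^{-\eta(z)}$ and note that, since $g\geq0$ and $\varepsilon\in(0,1]$, we have $0<h_\varepsilon(z)\leq\varepsilon^{-\eta_+}$ for a.a. $z\in\Omega$, so $h_\varepsilon\in L^\infty(\Omega)$, $h_\varepsilon>0$ a.e., and in particular $h_\varepsilon\in W^{1,p(z)}(\Omega)^*$. Consider the operator
$$
V:W^{1,p(z)}(\Omega)\to W^{1,p(z)}(\Omega)^*,\qquad V(u)=A_{p(z)}(u)+A_{q(z)}(u)+\xi(\cdot)|u|^{p(\cdot)-2}u
$$
(the middle term makes sense because $q_+<p_-$ yields $W^{1,p(z)}(\Omega)\hookrightarrow W^{1,q(z)}(\Omega)$). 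By Proposition \ref{prop2}, $A_{p(z)}$ and $A_{q(z)}$ are bounded, continuous and monotone, while $u\mapsto\xi(\cdot)|u|^{p(\cdot)-2}u$ is bounded, continuous and, since $\xi(z)>0$ for a.a. $z\in\Omega$, strictly monotone; hence $V$ is bounded, continuous and strictly monotone, and therefore maximal monotone. Using Propositions \ref{prop1} and \ref{prop4} one also gets
$$
\langle V(u),u\rangle=\int_\Omega|Du|^{p(z)}dz+\int_\Omega|Du|^{q(z)}dz+\int_\Omega\xi(z)|u|^{p(z)}dz\geq c\,|u|^{p_-}-c'
$$
for some $c>0$, $c'\geq0$ and all $u$, where $|\cdot|$ is the equivalent norm of Proposition \ref{prop4}; since $p_->1$, this makes $V$ coercive. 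A maximal monotone, coercive operator on a reflexive Banach space is surjective, and strict monotonicity makes it injective, so there is a unique $\tilde u_\varepsilon\in W^{1,p(z)}(\Omega)$ with $V(\tilde u_\varepsilon)=h_\varepsilon$. (Equivalently, $\tilde u_\varepsilon$ can be obtained as the unique global minimizer of the coercive, sequentially weakly lower semicontinuous $C^1$-functional $\psi_\varepsilon(u)=\gamma_{p(z)}(u)+\int_\Omega\frac{1}{q(z)}|Du|^{q(z)}dz-\int_\Omega h_\varepsilon u\,dz$.)

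Next I would check that $\tilde u_\varepsilon$ is nontrivial and nonnegative. Since $V(0)=0\neq h_\varepsilon$, we have $\tilde u_\varepsilon\neq0$. Testing $V(\tilde u_\varepsilon)=h_\varepsilon$ with $-\tilde u_\varepsilon^-\in W^{1,p(z)}(\Omega)$ gives
$$
\int_\Omega|D\tilde u_\varepsilon^-|^{p(z)}dz+\int_\Omega|D\tilde u_\varepsilon^-|^{q(z)}dz+\int_\Omega\xi(z)(\tilde u_\varepsilon^-)^{p(z)}dz=-\int_\Omega h_\varepsilon\tilde u_\varepsilon^-\,dz\leq0,
$$
because $h_\varepsilon\geq0$; hence all three integrals on the left vanish and, by Proposition \ref{prop4}, $\tilde u_\varepsilon^-=0$, i.e. $\tilde u_\varepsilon\geq0$, $\tilde u_\varepsilon\not\equiv0$. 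Then $|\tilde u_\varepsilon|^{p(z)-2}\tilde u_\varepsilon=\tilde u_\varepsilon^{p(z)-1}$, so $\tilde u_\varepsilon$ is a nontrivial nonnegative weak solution of \eqref{eq6}; conversely, any weak solution of \eqref{eq6} is nonnegative, hence solves $V(u)=h_\varepsilon$, hence coincides with $\tilde u_\varepsilon$, which settles uniqueness.

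The regularity step is where the real work lies. Since the datum $h_\varepsilon$ is bounded, I would first use the anisotropic $L^\infty$-bound (Moser-type iteration for $(p(z),q(z))$-equations) to obtain $\tilde u_\varepsilon\in L^\infty(\Omega)$, and then the anisotropic global $C^{1,\alpha}$-regularity theory, in the spirit of the results underlying Propositions \ref{prop2}--\ref{prop3} (see \cite{9Gas-Pap,16Pap-Rad-Rep,17Pap-Rad-Rep} and the references therein), to conclude that $\tilde u_\varepsilon\in C^{1,\alpha}(\overline{\Omega})$ for some $\alpha\in(0,1)$; in particular $\tilde u_\varepsilon\in C_+\setminus\{0\}$. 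Finally, since $\xi(z)\geq0$ a.e. and $h_\varepsilon\geq0$, the function $\tilde u_\varepsilon$ is a nontrivial nonnegative supersolution of $-\Delta_{p(z)}w-\Delta_{q(z)}w+\xi(\cdot)w^{p(\cdot)-1}=0$ in $\Omega$, so the anisotropic strong maximum principle together with the Hopf-type boundary point lemma (the tools behind Proposition \ref{prop3} and the cone $D_+$) give $\tilde u_\varepsilon(z)>0$ for all $z\in\overline{\Omega}$, that is, $\tilde u_\varepsilon\in{\rm int}\,C_+$. I expect the verification that these anisotropic regularity and strong-maximum-principle results are available in the present setting (variable exponents, unbalanced growth $q_+<p_-$, Neumann condition) to be the main obstacle; the existence, uniqueness and sign arguments above are routine. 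This completes the proof of Proposition \ref{prop5}.
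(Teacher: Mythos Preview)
Your proposal is correct and follows essentially the same route as the paper: define $V=A_{p(z)}+A_{q(z)}+\xi(\cdot)K_{p(z)}$, show it is strictly monotone, maximal monotone and coercive, invoke surjectivity to obtain the unique $\tilde u_\varepsilon$, then bootstrap regularity and apply the maximum principle. The paper is more specific about the references you leave open at the end, citing Gasi\'nski--Papageorgiou \cite[Proposition 3.1]{9Gas-Pap} for the $L^\infty$-bound, Fan \cite[Theorem 1.3]{8Fan} for $C^{1,\alpha}$-regularity, and Zhang \cite{23Zha} for the anisotropic strong maximum principle; your additional explicit check that $\tilde u_\varepsilon\geq0$ is a useful detail the paper omits.
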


\begin{proof}
  Let $K_{p(z)}: L^{p(z)}(\Omega)\to L^{p'(z)}(\Omega)$ be the map defined by
$$
K_{p(z)}(u)=|u|^{p(z)-2}u \mbox{ for all }u\in L^{p(z)}(\Omega).
$$

Evidently this map is bounded, continuous, strictly monotone. Then we consider the operator $V:W^{1,p(z)}(\Omega)\to W^{1,p(z)}(\Omega)^* $ defined by
$$
V(u)=A_{p(z)}(u)+A_{q(z)}(u) +\xi(z)K_{p(z)}(u) \mbox{ for all }u\in W^{1,p(z)}(\Omega).
$$

This operator is bounded, continuous, strictly monotone (thus, maximal monotone too). Also, if $u\in W^{1,p(z)}(\Omega)$, we have
\begin{eqnarray*}
  \langle V(u),u\rangle &=& \rho_{p(z)}(Du)+\rho_{q(z)}(Du)+\int_\Omega \xi(z)|u|^{p(z)}dz \\
   &\geq& \rho_{p(z)}(Du)+\int_\Omega \xi(z)|u|^{p(z)}dz \\
   &\geq& \|Du\|_{p(z)}+\|u\|_{L^{p(z)}(\Omega,\xi)}-1 \\
   && \left(\mbox{see Corollary 2.1.15 of \cite[p. 25]{7Die-Har-Has-Ruz} and recall that $\rho_{p(z),\xi}(\cdot)$ is modular}\right) \\
   &\geq& C_2\|u\| \mbox{ for some } C_2>0 \mbox{ (see Proposition \ref{prop4}), }   \\
   &\Rightarrow& V(\cdot) \mbox{ is coercive. }
\end{eqnarray*}

We know that a maximal monotone coercive operator is surjective (see Papageorgiou, R\u adulescu \& Repov\v s \cite[Corollary 2.8.7, p. 135]{15Pap-Rad-Rep}). Since $[g(\cdot)+\varepsilon]^{-\eta(\cdot)}\in L^\infty(\Omega)$, we can find $\tilde{u}_\varepsilon\in W^{1,p(z)}(\Omega)$ such that
$$
V(\tilde{u}_\varepsilon)=[g(\cdot)+\varepsilon]^{-\eta(\cdot)}.
$$

Moreover, the strict monotonicity of $V(\cdot)$ implies that this solution is unique. Proposition 3.1 of Gasinski \& Papageorgiou \cite{9Gas-Pap} implies that $\tilde{u}_\varepsilon\in L^{\infty}(\Omega)$. Then by Theorem 1.3 of Fan \cite{8Fan} (see also Lieberman \cite{13Lieberman}), we have that $\tilde{u}_\varepsilon\in C_+\setminus\{0\}$. Finally the anisotropic maximum principle of Zhang \cite{23Zha} implies that $\tilde{u}_\varepsilon\in{\rm int}\,C_+$.
\end{proof}

Using Proposition \ref{prop5} we can define the solution map $L_\varepsilon:L^{p(z)}(\Omega)\to L^{p(z)}(\Omega)$ for problem \eqref{eq6} by
\begin{equation}\label{eq7}
  L_\varepsilon(g)=\tilde{u}_\varepsilon.
\end{equation}

Clearly, a fixed point of this map will be a solution for problem \eqref{eq8ep}.

\begin{prop}\label{prop6}
  If hypotheses $H_0$ hold, then problem \eqref{eq8ep} admits a unique solution $\overline{u}_\varepsilon\in {\rm int}\,C_+$.
\end{prop}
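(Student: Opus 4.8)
The plan is to obtain $\overline{u}_\varepsilon$ as a fixed point of the solution map $L_\varepsilon$ of \eqref{eq7} via Schauder's fixed point theorem, and then to read off the qualitative properties and uniqueness directly from the structure of the operator $V$ used in the proof of Proposition \ref{prop5}. First I would establish a uniform a priori bound on $L_\varepsilon$. Given $g\in L^{p(z)}(\Omega)$ with $g\ge0$, since $0<\eta(z)<1$ and $0<\varepsilon\le1$ we have $0\le[g(\cdot)+\varepsilon]^{-\eta(\cdot)}\le\varepsilon^{-\eta_+}$, so testing $V(L_\varepsilon(g))=[g+\varepsilon]^{-\eta(\cdot)}$ with $u:=L_\varepsilon(g)$ yields $\langle V(u),u\rangle=\int_\Omega[g+\varepsilon]^{-\eta(z)}u\,dz\le\varepsilon^{-\eta_+}\int_\Omega u\,dz\le c_1\varepsilon^{-\eta_+}\|u\|$. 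On the other hand $\langle V(u),u\rangle\ge\rho_{p(z)}(Du)+\rho_{p(z),\xi}(u)$, and by Proposition \ref{prop1}(c) applied to $\rho_{p(z)}(D\cdot)$ and to the $\xi$-weighted modular $\rho_{p(z),\xi}$, together with the norm equivalence of Proposition \ref{prop4}, this dominates $c_2\|u\|^{p_-}$ once $\|u\|$ is large. Since $p_->1$, combining the two estimates bounds $\|u\|$ by a constant $M_1=M_1(\varepsilon)$ independent of $g$. Because $L_\varepsilon(g)\in{\rm int}\,C_+$ is nonnegative, it follows that $L_\varepsilon$ maps the set $D=\{g\in L^{p(z)}(\Omega):g\ge0,\ \|g\|_{p(z)}\le M\}$ into itself for $M$ large enough, and $D$ is closed, convex and bounded.

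Next I would verify that $L_\varepsilon$ is compact and continuous on $D$. Compactness follows at once from the a priori bound: $L_\varepsilon(D)$ is bounded in $W^{1,p(z)}(\Omega)$, hence relatively compact in $L^{p(z)}(\Omega)$ by the compact Sobolev embedding $W^{1,p(z)}(\Omega)\hookrightarrow L^{p(z)}(\Omega)$. For continuity, take $g_n\to g$ in $L^{p(z)}(\Omega)$ with $g_n,g\ge0$ and set $u_n=L_\varepsilon(g_n)$. Along a subsequence $g_n\to g$ a.e., so $[g_n+\varepsilon]^{-\eta(\cdot)}\to[g+\varepsilon]^{-\eta(\cdot)}$ a.e., and being dominated by $\varepsilon^{-\eta_+}$ this convergence also holds in $L^{p'(z)}(\Omega)$ by dominated convergence. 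Since $\{u_n\}$ is bounded, we may assume (up to a further subsequence) $u_n\overset{w}{\to}u$ in $W^{1,p(z)}(\Omega)$ and $u_n\to u$ in $L^{p(z)}(\Omega)$; testing $V(u_n)=[g_n+\varepsilon]^{-\eta(\cdot)}$ with $u_n-u$ and using the H\"older-type inequality gives $\langle V(u_n),u_n-u\rangle\to0$. As $A_{q(z)}$ and $u\mapsto\xi(\cdot)K_{p(z)}(u)$ are monotone and $A_{p(z)}$ is of type $(S)_+$ (Proposition \ref{prop2}), one deduces $\limsup_n\langle A_{p(z)}(u_n),u_n-u\rangle\le0$ and hence $u_n\to u$ in $W^{1,p(z)}(\Omega)$. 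Passing to the limit in the equation gives $V(u)=[g+\varepsilon]^{-\eta(\cdot)}$, that is $u=L_\varepsilon(g)$ by the uniqueness part of Proposition \ref{prop5}; the subsequence principle then yields $L_\varepsilon(g_n)\to L_\varepsilon(g)$ in $L^{p(z)}(\Omega)$.

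Now Schauder's fixed point theorem applies: $L_\varepsilon:D\to D$ is continuous with $L_\varepsilon(D)$ relatively compact and $D$ closed, convex and bounded, so there is $\overline{u}_\varepsilon\in D$ with $L_\varepsilon(\overline{u}_\varepsilon)=\overline{u}_\varepsilon$. By the definition of $L_\varepsilon$ this means $V(\overline{u}_\varepsilon)=[\overline{u}_\varepsilon+\varepsilon]^{-\eta(\cdot)}$, so $\overline{u}_\varepsilon$ solves \eqref{eq8ep}, and $\overline{u}_\varepsilon\in{\rm int}\,C_+$ by Proposition \ref{prop5}. For uniqueness, suppose $\overline{u}_\varepsilon,\hat{u}_\varepsilon$ both solve \eqref{eq8ep}; subtracting the two weak formulations and testing with $\overline{u}_\varepsilon-\hat{u}_\varepsilon$ gives $\langle V(\overline{u}_\varepsilon)-V(\hat{u}_\varepsilon),\overline{u}_\varepsilon-\hat{u}_\varepsilon\rangle=\int_\Omega\big([\overline{u}_\varepsilon+\varepsilon]^{-\eta(z)}-[\hat{u}_\varepsilon+\varepsilon]^{-\eta(z)}\big)(\overline{u}_\varepsilon-\hat{u}_\varepsilon)\,dz\le0$, since $t\mapsto[t+\varepsilon]^{-\eta(z)}$ is nonincreasing on $[0,\infty)$; but $V$ is strictly monotone, so the left-hand side is $\ge0$ and vanishes only when $\overline{u}_\varepsilon=\hat{u}_\varepsilon$. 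I expect the main obstacle to be the continuity of $L_\varepsilon$ — specifically the passage to the strong limit $u_n\to u$ in $W^{1,p(z)}(\Omega)$, where the $(S)_+$ property of $A_{p(z)}$ has to be combined with the mere monotonicity of $A_{q(z)}$ and of $\xi(\cdot)K_{p(z)}(\cdot)$, and the convergence of the singular right-hand sides $[g_n+\varepsilon]^{-\eta(\cdot)}$ must be handled along subsequences.
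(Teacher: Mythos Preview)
Your proposal is correct and follows essentially the same route as the paper: an a~priori bound for $L_\varepsilon$ obtained by testing with $L_\varepsilon(g)$, continuity of $L_\varepsilon$ via the $(S)_+$-property of $A_{p(z)}$ combined with the monotonicity of the remaining terms, Schauder's fixed point theorem for existence, and monotonicity for uniqueness. The only cosmetic differences are that the paper applies the Schauder--Tychonov theorem directly (without isolating a bounded convex set $D$) and proves uniqueness by testing with $(\overline{u}_\varepsilon-\hat{u}_\varepsilon)^+$ and $(\hat{u}_\varepsilon-\overline{u}_\varepsilon)^+$ separately rather than with the full difference.
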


\begin{proof}
  From Proposition \ref{prop5}, we have
\begin{equation}\label{eq8}
  \langle A_{p(z)}(\tilde{u}_\varepsilon),h \rangle+\langle A_{q(z)}(\tilde{u}_\varepsilon),h\rangle+\int_\Omega \xi(z) \tilde{u}_\varepsilon^{p(z)-1}hdz=\int_\Omega [g(z)+\varepsilon]^{-\eta(z)}hdz
\end{equation}
for all $h\in W^{1,p(z)}(\Omega)$.

In \eqref{eq8} we choose $h=\tilde{u}_\varepsilon=L_\varepsilon(g)\in W^{1,p(z)}(\Omega)$ (see \eqref{eq7}) and we obtain
\begin{eqnarray}\nonumber
% \nonumber % Remove numbering (before each equation)
  &&\rho_{p(z)}(D\tilde{u}_\varepsilon)+\rho_{q(z)}(D\tilde{u}_\varepsilon)+\int_\Omega \xi(z)\tilde{u}_\varepsilon^{p(z)}dz=\int_\Omega \frac{\tilde{u}_\varepsilon}{[g(z)+\varepsilon]^{\eta(z)}}dz, \\
  &\Rightarrow& \left\{\begin{array}{ll} &  { \|L_\varepsilon(g)\|^{p_+}\leq C_3\|L_\varepsilon(g)\|\ \mbox{if}\   \|L_\varepsilon(g)\|\leq 1 } \\
  &  { \|L_\varepsilon(g)\|^{p_-}\leq C_3\|L_\varepsilon(g)\|\ \mbox{if}\   \|L_\varepsilon(g)\|> 1\ \mbox{for some } C_3=C_3(\varepsilon)>0, \mbox{ all }g\in L^\infty(\Omega) }
  \end{array}\right. \label{eq9}\\ \nonumber
  && \mbox{ (see Proposition \ref{prop4}). }
\end{eqnarray}

Next, we show that $L_\varepsilon(\cdot)$ is continuous. To this end let $g_n\to g$ in $L^{p(z)}(\Omega)$. From \eqref{eq9} we have that
$$
\left\{L_\varepsilon(g_n)=\tilde{u}_{\varepsilon_n}=\tilde{u}_n\right\}_{n\geq1}\subseteq W^{1,p(z)}(\Omega) \mbox{ is bounded. }
$$

So, we may assume that
\begin{equation}\label{eq10}
  \tilde{u}_n\overset{w}{\to}\tilde{u} \mbox{ in } W^{1,p(z)}(\Omega) \mbox{ and } \tilde{u}_n\to \tilde{u} \mbox{ in }L^{p(z)}(\Omega).
\end{equation}

We have
\begin{equation}\label{eq11}
  \langle A_{p(z)}(\tilde{u}_n),h\rangle+\langle A_{q(z)}(\tilde{u}_n),h\rangle+\int_\Omega \xi(z)(\tilde{u}_n)^{p(z)-1}hdz=\int_\Omega \frac{h}{[g_n+\varepsilon]^{\eta(z)}}dz
\end{equation}
for all $h\in W^{1,p(z)}(\Omega)$, all $n\in \NN$.

In \eqref{eq11} we choose $h=\tilde{u}_n-\tilde{u}\in W^{1,p(z)}(\Omega)$, pass to the limit as $n\to\infty$ and use \eqref{eq10}. Then we have
\begin{eqnarray}\nonumber
% \nonumber % Remove numbering (before each equation)
   && \lim_{n\to\infty}\left[\langle A_{p(z)}(\tilde{u}_n),\tilde{u}_n-\tilde{u}\rangle+\langle A_{q(z)}(\tilde{u}_n),\tilde{u}_n-\tilde{u}\rangle\right]=0, \\ \nonumber
  &\Rightarrow& \limsup_{n\to\infty}\left[\langle A_{p(z)}(\tilde{u}_n),\tilde{u}_n-\tilde{u}\rangle+\langle A_{q(z)}(\tilde{u}),\tilde{u}_n-\tilde{u}\rangle\right]\leq0 \\ \nonumber
   && \mbox{ (since $A_{q(z)}(\cdot)$ is monotone), } \\ \nonumber
   &\Rightarrow& \limsup_{n\to\infty}\langle A_{p(z)}(\tilde{u}_n),\tilde{u}_n-\tilde{u}\rangle\leq 0 \mbox{ (see \eqref{eq10}), } \\
   &\Rightarrow& \tilde{u}_n\to \tilde{u} \mbox{ in } W^{1,p(z)}(\Omega) \mbox{ (see Proposition \eqref{prop2}).} \label{eq12}
\end{eqnarray}

If in \eqref{eq11} we pass to the limit as $n\to\infty$ and use \eqref{eq12}, we obtain that
\begin{eqnarray*}
% \nonumber % Remove numbering (before each equation)
  && \langle A_{p(z)}(\tilde{u}),h\rangle+\langle A_{q(z)}(\tilde{u}),h\rangle+\int_\Omega \xi(z)\tilde{u}^{p(z)-1}hdz=\int_\Omega \frac{h}{[g+\varepsilon]^{\eta(z)}}dz \\
   && \mbox{ for all }h\in W^{1,p(z)}(\Omega), \\
   &\Rightarrow& \tilde{u}=L_\varepsilon(g), \\
   &\Rightarrow& L_\varepsilon(\cdot) \mbox{ is continuous. }
\end{eqnarray*}

The continuity of $L_\varepsilon(\cdot)$ together with \eqref{eq9} and the compact embedding of $W^{1,p(z)}(\Omega)$ into $L^{p(z)}(\Omega)$, permit the use of Schauder-Tychonov fixed point theorem (see, for example, Papageorgiou \& Winkert \cite[Theorem 6.8.5, p. 581]{20Pap-Win}) and we find $\overline{u}_\varepsilon\in{\rm int}\,C_+$ such that
\begin{eqnarray*}
% \nonumber % Remove numbering (before each equation)
  && L_\varepsilon(\overline{u}_\varepsilon)=\overline{u}_\varepsilon, \\
  &\Rightarrow& \overline{u}_\varepsilon\in{\rm int}\,C_+ \mbox{ is a positive solution of }\eqref{eq5}.
\end{eqnarray*}

Next we show the uniqueness of this solution. Suppose that $\hat{u}_\varepsilon\in W^{1,p(z)}(\Omega)$ is another positive solution of \eqref{eq5}. Again we have $\hat{u}_\varepsilon\in {\rm int}\,C_+$. Also, we have
\begin{eqnarray*}
% \nonumber % Remove numbering (before each equation)
  0 &\leq& \langle A_{p(z)}(\overline{u}_\varepsilon)-A_{p(z)}(\hat{u}_\varepsilon),(\overline{u}_\varepsilon-\hat{u}_\varepsilon)^+\rangle+
\langle A_{q(z)}(\overline{u}_\varepsilon)-A_{q(z)}(\hat{u}_\varepsilon),(\overline{u}_\varepsilon-\hat{u}_\varepsilon)^+\rangle\\
&+&\int_\Omega\xi(z)(\overline{u}_\varepsilon^{p(z)-1}-\hat{u}_\varepsilon^{p(z)-1})(\overline{u}_\varepsilon-
\hat{u}_\varepsilon)^+ dz= \\
  &=& \int_\Omega \left[\frac{1}{[\overline{u}_\varepsilon+\varepsilon]^{\eta(z)}}-\frac{1}{[\hat{u}_\varepsilon+\varepsilon]^{\eta(z)}}
\right](\overline{u}_\varepsilon-\hat{u}_\varepsilon)^+dz\leq 0 \\
  \Rightarrow \overline{u}_\varepsilon&\leq& \hat{u}_\varepsilon.
\end{eqnarray*}

Interchanging the roles of $\overline{u}_\varepsilon$ and $\hat{u}_\varepsilon$ in the above argument, we also have that $\hat{u}_\varepsilon\leq \overline{u}_\varepsilon$, therefore $\overline{u}_\varepsilon=\hat{u}_\varepsilon$. This proves the uniqueness of the positive solution $\overline{u}_\varepsilon\in{\rm int}\,C_+$ of problem \eqref{eq8ep}.
\end{proof}

Evidently, to produce a positive solution of \eqref{eq5}, we will let $\varepsilon\to0^+$.

To this end, the following monotonicity property of the map $\varepsilon\mapsto\overline{u}_\varepsilon$ will be useful.

\begin{prop}\label{prop7}
  If hypotheses $H_0$ hold, then the map $\varepsilon\mapsto\overline{u}_\varepsilon$ from $(0,1]$ into $C_+$ is nonincreasing, that is,
$$
0<\varepsilon'<\varepsilon\leq 1\Rightarrow \overline{u}_\varepsilon\leq \overline{u}_{\varepsilon'}.
$$
\end{prop}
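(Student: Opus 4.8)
The plan is to argue directly, by testing the two weak formulations against the positive part of their difference, exactly as in the uniqueness part of the proof of Proposition \ref{prop6}. Fix $0<\varepsilon'<\varepsilon\le 1$ and write $u=\overline{u}_\varepsilon$ and $v=\overline{u}_{\varepsilon'}$; by Proposition \ref{prop6} both belong to ${\rm int}\,C_+$, and since the anisotropic Sobolev space $W^{1,p(z)}(\Omega)$ is closed under the lattice operations, $h:=(u-v)^+\in W^{1,p(z)}(\Omega)$ is an admissible test function.

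Writing the weak formulation of $(8_\varepsilon)$ for $u$ and of $(8_{\varepsilon'})$ for $v$, both with test function $h$, and subtracting, I obtain
$$
\langle A_{p(z)}(u)-A_{p(z)}(v),h\rangle+\langle A_{q(z)}(u)-A_{q(z)}(v),h\rangle+\int_\Omega\xi(z)\left(u^{p(z)-1}-v^{p(z)-1}\right)h\,dz=\int_\Omega\left([u+\varepsilon]^{-\eta(z)}-[v+\varepsilon']^{-\eta(z)}\right)h\,dz.
$$
On the set $\{u>v\}$, which carries $h$, one has $Dh=Du-Dv$ a.e., so the elementary monotonicity inequalities for the maps $y\mapsto|y|^{r-2}y$ (applied with $r=p(z)$ and $r=q(z)$) show that the first two terms on the left are nonnegative; since $t\mapsto t^{p(z)-1}$ is nondecreasing and $\xi(z)>0$ for a.a. $z\in\Omega$, the third term is nonnegative as well, so the whole left-hand side is $\ge0$. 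On the other hand, on $\{u>v\}$ we have $u+\varepsilon>v+\varepsilon'$ (because $u>v$ and $\varepsilon>\varepsilon'$), and since $t\mapsto t^{-\eta(z)}$ is strictly decreasing on $(0,\infty)$, the integrand on the right is strictly negative there; hence the right-hand side is $\le0$. Therefore both sides vanish, and since the right-hand integrand is strictly negative on $\{u>v\}$, this forces $\left|\{u>v\}\right|=0$, i.e. $u\le v$ a.e. in $\Omega$. As $u,v\in C(\overline{\Omega})$, we conclude $\overline{u}_\varepsilon\le\overline{u}_{\varepsilon'}$ on $\overline{\Omega}$, which is the asserted monotonicity.

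This argument is essentially routine and I do not anticipate a genuine obstacle; the only points needing a little care are the admissibility of $h=(u-v)^+$ in $W^{1,p(z)}(\Omega)$ (the lattice property of variable exponent Sobolev spaces) and the sign bookkeeping on $\{u>v\}$, where it is crucial that the strict inequality $\varepsilon>\varepsilon'$ is combined with $u>v$ in order to compare $[u+\varepsilon]^{-\eta(z)}$ with $[v+\varepsilon']^{-\eta(z)}$. One could alternatively deduce the conclusion from a weak comparison principle, but the direct computation above, parallel to the uniqueness proof of Proposition \ref{prop6}, is the most economical route.
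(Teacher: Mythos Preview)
Your proof is correct. It is a direct weak comparison argument: you subtract the two weak formulations, test against $(u-v)^+$, and use the monotonicity of the differential operator on the left together with the strict monotonicity of $t\mapsto t^{-\eta(z)}$ and the strict inequality $\varepsilon>\varepsilon'$ on the right to force $|\{u>v\}|=0$. This is exactly parallel to the uniqueness argument at the end of Proposition~\ref{prop6}, as you note.

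The paper takes a different, more elaborate route. It introduces a truncation $e_\varepsilon(z,x)$ of the reaction $[x^++\varepsilon]^{-\eta(z)}$ from above at the level $\overline{u}_{\varepsilon'}$, builds the corresponding $C^1$ energy functional $\tau_\varepsilon$, shows it is coercive and sequentially weakly lower semicontinuous, and then obtains a global minimizer $\hat{u}_\varepsilon$. Testing the Euler equation for $\hat{u}_\varepsilon$ against $-\hat{u}_\varepsilon^-$ and $(\hat{u}_\varepsilon-\overline{u}_{\varepsilon'})^+$ shows $\hat{u}_\varepsilon\in[0,\overline{u}_{\varepsilon'}]$, and then the uniqueness part of Proposition~\ref{prop6} identifies $\hat{u}_\varepsilon=\overline{u}_\varepsilon$, whence $\overline{u}_\varepsilon\le\overline{u}_{\varepsilon'}$. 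This truncation-plus-minimization pattern is the workhorse of the later sections (Propositions~\ref{prop9}, \ref{prop13}, \ref{prop15}), so the authors are being methodologically consistent; but for this particular monotonicity statement your direct comparison is more economical and avoids the detour through a new variational problem. The paper's approach does have the advantage that it never needs the pointwise sign computation on $\{u>v\}$ combining the two distinct $\varepsilon$-values, relying instead only on the single inequality $[\overline{u}_{\varepsilon'}+\varepsilon']^{-\eta(z)}\ge[\overline{u}_{\varepsilon'}+\varepsilon]^{-\eta(z)}$ together with uniqueness.
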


\begin{proof}
  Let $0<\varepsilon'<\varepsilon\leq 1$ and consider $\overline{u}_{\varepsilon'}$, $\overline{u}_\varepsilon\in{\rm int}\,C_+$ the corresponding unique positive solutions of problems $(8_{\varepsilon'})$ and  \eqref{eq8ep} respectively, established in Proposition \ref{prop6}.

We have
\begin{eqnarray}\nonumber
% \nonumber % Remove numbering (before each equation)
   && -\Delta_{p(z)}\overline{u}_{\varepsilon'}-\Delta_{q(z)}\overline{u}_{\varepsilon'}+\xi(z)
\overline{u}_{\varepsilon'}^{p(z)-1} \\ \nonumber
   &=& [\overline{u}_{\varepsilon'}+\varepsilon']^{-\eta(z)} \\
   &\geq& [\overline{u}_{\varepsilon'}+\varepsilon]^{-\eta(z)} \mbox{  in }\Omega \mbox{ (since $0<\varepsilon'<\varepsilon$). } \label{eq13}
\end{eqnarray}

We introduce the Carath\' eodory function $e_\varepsilon(z,x)$ defined by
\begin{equation}\label{eq14}
  e_\varepsilon(z,x)=\left\{
                       \begin{array}{ll}
                     \di    \frac{1}{[x^++\varepsilon]^{\eta(z)}}, & \hbox{ if } x\leq\overline{u}_{\varepsilon'}(z) \\
                      \di   \frac{1}{[\overline{u}_{\varepsilon'}(z)+\varepsilon]^{\eta(z)}}, & \hbox{ if } \overline{u}_{\varepsilon'}(z)<x.
                       \end{array}
                     \right.
\end{equation}

We set $E_\varepsilon(z,x)=\displaystyle{\int_0^x e_\varepsilon(z,s)ds}$ and consider the $C^1$-functional $\tau_\varepsilon: W^{1,p(z)}(\Omega)\to\RR$ defined by
$$
\tau_\varepsilon(u)=\gamma_{p(z)}(u)+\int_\Omega\frac{1}{q(z)}|Du|^{q(z)}dz-\int_\Omega E_\varepsilon(z,u)dz \mbox{  for all }u\in W^{1,p(z)}(\Omega).
$$

We have
\begin{eqnarray*}
% \nonumber % Remove numbering (before each equation)
  \tau_\varepsilon(u) &\geq& \rho_{p(z)}(Du)+\rho_{p(z),\xi}(u)-C_4 \mbox{ for some }C_4>0 \mbox{ (see \eqref{eq14}) } \\
  &\geq& |u|-C_4-1, \\
  \Rightarrow \tau_\varepsilon(\cdot) &\mbox{is}& \mbox{coercive (see Proposition \ref{prop4}). }
\end{eqnarray*}

Also, exploiting the compact embedding of $W^{1,p(z)}(\Omega)$ into $L^{p(z)}(\Omega)$, we have that
$$
\tau_\varepsilon(\cdot) \mbox{ is sequentially weakly lower semicontinuous. }
$$

Invoking the Weierstrass-Tonelli theorem, we can find $\hat{u}_\varepsilon\in W^{1,p(z)}(\Omega)$ such that
\begin{eqnarray*}
% \nonumber % Remove numbering (before each equation)
  && \tau_\varepsilon(\hat{u}_\varepsilon)=\min\left\{\tau_\varepsilon(u):\:u\in W^{1,p(z)}(\Omega)\right\}, \\
  &\Rightarrow& \tau'_\varepsilon(\hat{u}_\varepsilon)=0,
\end{eqnarray*}
\begin{equation}\label{eq15}
  \Rightarrow \langle A_{p(z)}(\hat{u}_\varepsilon),h\rangle+\langle A_{q(z)}(\hat{u}_\varepsilon),h\rangle+\int_\Omega \xi(z)|\hat{u}_\varepsilon|^{p(z)-2}\hat{u}_\varepsilon h dz=\int_\Omega e_\varepsilon(z,\hat{u}_\varepsilon)h dz
\end{equation}
for all $h\in W^{1,p(z)}(\Omega)$.

In \eqref{eq15} we choose $h=-\hat{u}_\varepsilon^-\in W^{1,p(z)}(\Omega)$ and obtain
\begin{eqnarray*}
% \nonumber % Remove numbering (before each equation)
  && \rho_{p(z)}(D\hat{u}_\varepsilon^-)+\rho_{q(z)}(D\hat{u}_\varepsilon^-)+\rho_{p(z),\xi}(\hat{u}_\varepsilon^-)=0 \mbox{ (see \eqref{eq14}),} \\
  &\Rightarrow& \rho_{p(z)}(Du_\varepsilon^-)+\rho_{p(z),\xi}(\hat{u}_\varepsilon^-)\leq0 \\
  &\Rightarrow& \hat{u}_\varepsilon\geq0,\ \hat{u}_\varepsilon\not=0 \mbox{ (see \eqref{eq15}). }
\end{eqnarray*}

Next, in \eqref{eq15} we choose $h=[\hat{u}_\varepsilon-\overline{u}_{\varepsilon'}]^+\in W^{1,p(z)}(\Omega)$. We obtain
\begin{eqnarray*}
% \nonumber % Remove numbering (before each equation)
  && \langle A_{p(z)}(\hat{u}_\varepsilon),(\hat{u}_\varepsilon-\overline{u}_{\varepsilon'})^+\rangle+\langle A_{q(z)}(\hat{u_\varepsilon}), (\hat{u}_\varepsilon-\overline{u}_{\varepsilon'})^+\rangle+\int_\Omega \xi(z)\hat{u}_\varepsilon^{p(z)-1}(\hat{u}_\varepsilon-\overline{u}_{\varepsilon'})^+dz \\
   &=& \int_\Omega \frac{[\hat{u}_\varepsilon-\overline{u}_{\varepsilon'}]^+}{[\overline{u}_{\varepsilon'}+\varepsilon]^{\eta(z)}}dz \mbox{ (see \eqref{eq14}) } \\
  &\leq& \langle A_{p(z)}(\overline{u}_{\varepsilon'}), (\hat{u}_\varepsilon-\overline{u}_{\varepsilon'})^+\rangle+
\langle A_{q(z)}(\overline{u}_{\varepsilon'}),(\hat{u}_\varepsilon-\overline{u}_{\varepsilon'})^+\rangle+\int_\Omega \xi(z)\overline{u}_{\varepsilon'}^{p(z)-1}(\hat{u}_\varepsilon-\overline{u}_{\varepsilon'})^+dz, \\
  \Rightarrow \hat{u}_\varepsilon&\leq& \overline{u}_{\varepsilon'}.
\end{eqnarray*}

So, we have proved that
\begin{eqnarray*}
% \nonumber % Remove numbering (before each equation)
  && \hat{u}_\varepsilon\in[0,\overline{u}_{\varepsilon'}],\ \hat{u}_\varepsilon\not=0, \\
  &\Rightarrow& \hat{u}_\varepsilon=\overline{u}_\varepsilon \mbox{ (see \eqref{eq15}, \eqref{eq14} and Proposition \ref{prop6}),  } \\
  &\Rightarrow& \overline{u}_\varepsilon\leq \overline{u}_{\varepsilon'}.
\end{eqnarray*}
The proof is now complete.
\end{proof}

Now we will pass to the limit as $\varepsilon\to0^+$ and produce a solution for problem \eqref{eq5}.

\begin{prop}\label{prop8}
  If hypotheses $H_0$ hold, then problem \eqref{eq5} admits a unique solution $\overline{u}\in {\rm int}\,C_+$.
\end{prop}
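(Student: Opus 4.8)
The plan is to pass to the limit as $\varepsilon\to 0^+$ in problem \eqref{eq8ep}, using the monotonicity established in Proposition \ref{prop7}. First I would fix some $\varepsilon_0\in(0,1]$ and note that by Proposition \ref{prop7}, for all $\varepsilon\in(0,\varepsilon_0]$ we have $\overline{u}_{\varepsilon_0}\leq\overline{u}_\varepsilon$, so $\{\overline{u}_\varepsilon\}$ is bounded below by a function in ${\rm int}\,C_+$; this lower bound is what will control the singular term. For the upper bound, I would test the weak formulation of \eqref{eq8ep} with $h=\overline{u}_\varepsilon$, giving
$$
\rho_{p(z)}(D\overline{u}_\varepsilon)+\rho_{q(z)}(D\overline{u}_\varepsilon)+\rho_{p(z),\xi}(\overline{u}_\varepsilon)=\int_\Omega\frac{\overline{u}_\varepsilon}{[\overline{u}_\varepsilon+\varepsilon]^{\eta(z)}}dz\leq\int_\Omega\overline{u}_\varepsilon^{1-\eta(z)}dz\leq C
$$
since $\overline{u}_\varepsilon\leq\overline{u}_{\varepsilon'}$ for $\varepsilon'<\varepsilon$ forces nothing by itself, but combined with a bound $\overline{u}_\varepsilon\le\overline u_{\varepsilon}$ being replaced — more carefully, I would use that $x\mapsto x^{1-\eta(z)}$ has at most linear growth so $\int_\Omega\overline u_\varepsilon^{1-\eta(z)}dz\le C(1+\|\overline u_\varepsilon\|_{p(z)})$, and then Proposition \ref{prop4} together with Proposition \ref{prop1}(c) closes the estimate to give $\|\overline{u}_\varepsilon\|\leq C$ uniformly in $\varepsilon\in(0,1]$. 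Hence along a sequence $\varepsilon_n\to 0^+$ we may assume $\overline{u}_{\varepsilon_n}\overset{w}{\to}\overline{u}$ in $W^{1,p(z)}(\Omega)$ and $\overline{u}_{\varepsilon_n}\to\overline{u}$ in $L^{p(z)}(\Omega)$; monotonicity in $\varepsilon$ in fact gives convergence of the whole net, and $\overline{u}\geq\overline{u}_{\varepsilon_0}$ a.e., so $\overline{u}\neq 0$.

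Next I would show strong convergence in $W^{1,p(z)}(\Omega)$ and pass to the limit in the equation. Testing the difference of the weak formulations with $h=\overline{u}_{\varepsilon_n}-\overline{u}$: the right-hand side is $\int_\Omega[\overline{u}_{\varepsilon_n}+\varepsilon_n]^{-\eta(z)}(\overline{u}_{\varepsilon_n}-\overline{u})dz$, and here the uniform lower bound $\overline{u}_{\varepsilon_n}\geq\overline{u}_{\varepsilon_0}$ gives the domination $[\overline{u}_{\varepsilon_n}+\varepsilon_n]^{-\eta(z)}\leq\overline{u}_{\varepsilon_0}^{-\eta(z)}\in L^\infty(\Omega)$ (since $\overline u_{\varepsilon_0}\in{\rm int}\,C_+$, so bounded away from $0$), whence that integral tends to $0$. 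Dropping the nonnegative $\xi$-term and using monotonicity of $A_{q(z)}$ exactly as in the proof of Proposition \ref{prop6}, I get $\limsup_n\langle A_{p(z)}(\overline{u}_{\varepsilon_n}),\overline{u}_{\varepsilon_n}-\overline{u}\rangle\leq 0$, so $\overline{u}_{\varepsilon_n}\to\overline{u}$ in $W^{1,p(z)}(\Omega)$ by the $(S)_+$ property (Proposition \ref{prop2}). Then for any fixed $h\in W^{1,p(z)}(\Omega)$ with $h\ge0$, Fatou's lemma (using that $[\overline u_{\varepsilon_n}+\varepsilon_n]^{-\eta(z)}h$ is eventually bounded below in a controlled way, or monotone convergence along the decreasing net) yields $\int_\Omega\overline{u}^{-\eta(z)}h\,dz\leq\liminf_n\int_\Omega[\overline{u}_{\varepsilon_n}+\varepsilon_n]^{-\eta(z)}h\,dz<\infty$, and in the limit
$$
\langle A_{p(z)}(\overline{u}),h\rangle+\langle A_{q(z)}(\overline{u}),h\rangle+\int_\Omega\xi(z)\overline{u}^{p(z)-1}h\,dz=\int_\Omega\overline{u}^{-\eta(z)}h\,dz
$$
first for $h\ge0$ and then, by writing $h=h^+-h^-$, for all $h\in W^{1,p(z)}(\Omega)$; the finiteness $\overline{u}^{-\eta(\cdot)}h\in L^1(\Omega)$ for all such $h$ is part of what must be checked, and follows from $\overline{u}\geq\overline{u}_{\varepsilon_0}\in{\rm int}\,C_+$. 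Since $\overline u\ge\overline u_{\varepsilon_0}\in{\rm int}\,C_+$ and the standard anisotropic regularity (Gasinski--Papageorgiou, Fan, Lieberman, Zhang, as invoked in Proposition \ref{prop5}) applies to the equation $-\Delta_{p(z)}\overline u-\Delta_{q(z)}\overline u+\xi(z)\overline u^{p(z)-1}=\overline u^{-\eta(z)}$ whose right-hand side is now in $L^\infty_{loc}$ and bounded, we obtain $\overline{u}\in{\rm int}\,C_+$, so $\overline{u}$ solves \eqref{eq5}.

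For uniqueness I would argue exactly as in the uniqueness part of Proposition \ref{prop6}. If $\overline{u},\overline{v}\in{\rm int}\,C_+$ both solve \eqref{eq5}, I would test the difference of the two weak formulations with $(\overline{u}-\overline{v})^+$; the operator terms contribute a nonnegative quantity by monotonicity of $A_{p(z)},A_{q(z)}$ and of $x\mapsto\xi(z)x^{p(z)-1}$, while the singular term contributes $\int_\Omega(\overline{u}^{-\eta(z)}-\overline{v}^{-\eta(z)})(\overline{u}-\overline{v})^+dz\leq 0$ because $x\mapsto x^{-\eta(z)}$ is strictly decreasing; hence $(\overline{u}-\overline{v})^+=0$, i.e. $\overline{u}\leq\overline{v}$, and by symmetry $\overline{u}=\overline{v}$. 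The main obstacle I anticipate is the limit passage in the singular term: one must be sure that the uniform lower bound coming from Proposition \ref{prop7} — rather than some compactness of $\overline u_{\varepsilon_n}$ alone — is what legitimizes both the domination needed for the $(S)_+$ argument and the one-sided Fatou estimate, and that this lower bound survives the weak limit so that $\overline u$ is strictly positive and the integrability $\overline u^{-\eta(\cdot)}h\in L^1(\Omega)$ genuinely holds for every test function.
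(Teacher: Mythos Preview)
Your argument is correct and shares the paper's overall strategy --- let $\varepsilon\to0^+$ in \eqref{eq8ep} using the monotonicity of Proposition~\ref{prop7}, with the lower barrier $\overline u_{\varepsilon_0}\in{\rm int}\,C_+$ controlling the singular term --- but the passage to the limit is organized differently. The paper does not use the $(S)_+$ property at this step: instead, from the uniform $L^\infty$ bound on $[\overline u_n+\varepsilon_n]^{-\eta(z)}$ (coming from $\overline u_1\le\overline u_n$) it applies the $L^\infty$ estimate of Gasi\'nski--Papageorgiou and then Fan's $C^{1,\alpha}$ regularity \emph{uniformly in $n$}, and uses the compact embedding $C^{1,\alpha}(\overline\Omega)\hookrightarrow C^1(\overline\Omega)$ together with monotonicity to obtain $\overline u_n\to\overline u$ in $C^1(\overline\Omega)$. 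That gives the limit equation and $\overline u\in{\rm int}\,C_+$ in one stroke. Your route stays at the Sobolev level: weak compactness, then strong $W^{1,p(z)}$ convergence via $(S)_+$, and regularity is invoked only once, for the limit $\overline u$. The advantage of the paper's approach is that $C^1$ convergence makes every limit (including the singular term) trivial; the advantage of yours is that it avoids uniform-in-$\varepsilon$ regularity estimates, which is a more robust template when such estimates are not available. Two small points of wording: what you test is the weak formulation for $\overline u_{\varepsilon_n}$ (not a ``difference of weak formulations'', since none exists yet for $\overline u$), and the $\xi$-term is not ``dropped'' for sign reasons but because it tends to $0$ by the $L^{p(z)}$ convergence --- exactly as in the proof of Proposition~\ref{prop6} you cite. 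The uniqueness argument is identical to the paper's.
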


\begin{proof}
  Let $\{\varepsilon_n\}_{n\geq1}\subseteq(0,1]$ be such that $\varepsilon_n\to0^+$ and let $\overline{u}_n=\overline{u}_{\varepsilon_n}\in{\rm int}\,C_+$ be as in Proposition \ref{prop6}. We have
\begin{equation}\label{eq16}
  \langle A_{p(z)}(\overline{u}_n),h\rangle+\langle A_{q(z)}(\overline{u}_n),h\rangle+\int_\Omega \xi(z)\overline{u}_n^{p(z)-1}hdz=\int_\Omega \frac{h}{[\overline{u}_n+\varepsilon_n]^{\eta(z)}}dz
\end{equation}
for all $h\in W^{1,p(z)}(\Omega)$, all $n\in \NN$.

We choose $h=\overline{u}_n\in W^{1,p(z)}(\Omega)$. We obtain
\begin{eqnarray*}
% \nonumber % Remove numbering (before each equation)
   && \rho_{p(z)}(D\overline{u}_n)+\rho_{q(z)}(D\overline{u}_n)+\int_\Omega \xi(z)\overline{u}_n^{p(z)}dz\leq \int_\Omega \frac{\overline{u}_n}{\overline{u}_1^{\eta(z)}}\\
   && \mbox{(since $\overline{u}_1\leq\overline{u}_n$ for all $n\in\NN$, see Proposition \ref{prop7})} \\
  &\Rightarrow& \{\overline{u}_n\}_{n\geq1}\subseteq W^{1,p(z)}(\Omega) \mbox{ is bounded } \\
  && \mbox{ (see Proposition \ref{prop4} and recall that $p_->1$). }
\end{eqnarray*}

Proposition 3.1 of Gasinski \& Papageorgiou \cite{9Gas-Pap}, implies that we can find $C_5>0$ such that
$$
\|\overline{u}_n\|_\infty\leq C_5 \mbox{ for all }n\in\NN.
$$

Then the anisotropic regularity theory of Fan \cite[Theorem 1.3]{8Fan}  implies that we can find $\alpha\in(0,1)$ and $C_6>0$ such that
$$
\overline{u}_n\in C^{1,\alpha}(\overline{\Omega}), \ \|\overline{u}_n\|_{C^{1,\alpha}(\overline{\Omega})}\leq C_6 \mbox{ for all }n\in\NN.
$$

The compact embedding of $C^{1,\alpha}(\overline{\Omega})$ into $C^1(\overline{\Omega})$ and the monotonicity of the sequence $\{\overline{u}_n\}_{n\geq1}$ (see Proposition \ref{prop7}), imply that
\begin{equation}\label{eq17}
  \overline{u}_n\to\overline{u} \mbox{ in } C^1(\overline{\Omega}).
\end{equation}

Since $\overline{u}_1\leq \overline{u}_n$ for all $n\in \NN$, we have $\overline{u}\not=0$ and so $\overline{u}\in{\rm int}\,C_+$. Moreover, passing to the limit as $n\to\infty$ in \eqref{eq16} and using \eqref{eq17}, we conclude that $\overline{u}\in{\rm int}\,C_+$ is a positive solution of problem \eqref{eq5}.

Finally, we show the uniqueness of this positive solution. So, suppose that $\tilde{u}\in W^{1,p(z)}(\Omega)$ is another positive solution of \eqref{eq5}. As in the proof of Proposition \ref{prop6}, using the fact that the map $x\mapsto x^{-\eta(z)}$ is strictly decreasing on $(0,+\infty)$, we obtain
\begin{eqnarray*}
% \nonumber % Remove numbering (before each equation)
  && \tilde{u}=\overline{u}, \\
  &\Rightarrow& \overline{u}\in{\rm int}\,C_+ \mbox{ is the unique positive solution of \eqref{eq5}.}
\end{eqnarray*}
The proof is now complete.
\end{proof}

In the next section we will use $\overline{u}\in{\rm int}\,C_+$ and truncation techniques to bypass the singularity and show that problem $(P_\lambda)$ has positive solutions for certain values of the parameter $\lambda>0$.

\section{Positive solutions}

We introduce the following two sets
\begin{eqnarray*}
  && \mathcal{L}=\{\lambda>0:\:\mbox{ problem $(P_\lambda)$ has a positive solution}\}, \\
  && S_\lambda=\mbox{set of positive solutions of problem $(P_\lambda)$. }
\end{eqnarray*}

We start by showing the nonemptiness of $\mathcal{L}$ (=the set of admissible parameter values).

\begin{prop}\label{prop9}
  If hypotheses $H_0$, $H_1$ hold, then $\mathcal{L}\not=\emptyset$.
\end{prop}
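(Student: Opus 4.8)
The plan is to show that problem $(P_\lambda)$ has a positive solution for all sufficiently small $\lambda>0$, using the solution $\overline{u}\in{\rm int}\,C_+$ of the purely singular problem \eqref{eq5} from Proposition \ref{prop8} as a subsolution-type barrier. The first step is to freeze the singular term at $\overline{u}$: since $f\geq 0$, the function $\overline{u}$ satisfies
$$-\Delta_{p(z)}\overline{u}-\Delta_{q(z)}\overline{u}+\xi(z)\overline{u}^{p(z)-1}=\overline{u}^{-\eta(z)}\leq \overline{u}^{-\eta(z)}+\lambda f(z,\overline{u})\quad\text{in }\Omega,$$
so $\overline{u}$ is a (weak) subsolution of $(P_\lambda)$ for every $\lambda>0$. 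I would then introduce the truncation at $\overline{u}$ from below, namely replace $x\mapsto x^{-\eta(z)}+\lambda f(z,x)$ by the Carath\'eodory function that equals $\overline{u}(z)^{-\eta(z)}+\lambda f(z,\overline{u}(z))$ for $x\le \overline{u}(z)$ and $x^{-\eta(z)}+\lambda f(z,x)$ for $x>\overline{u}(z)$. This truncation removes the singularity (the function $x\mapsto x^{-\eta(z)}$ is only ever evaluated at or above $\overline{u}\in{\rm int}\,C_+$, hence stays bounded), so the associated energy functional $\hat\varphi_\lambda$ is $C^1$ on $W^{1,p(z)}(\Omega)$ and the machinery of critical point theory applies.

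Next I would produce a nontrivial critical point of $\hat\varphi_\lambda$ via the mountain pass theorem. Because of the upper bound in $H_1(i)$ with $r_+<p^*(z)$, one controls the truncated perturbation term by $\rho_{r(z)}$ and the compact embedding $W^{1,p(z)}(\Omega)\hookrightarrow L^{r(z)}(\Omega)$; combined with the coercive lower bound coming from $\gamma_{p(z)}$ (using Proposition \ref{prop4} to handle the $\xi$-weighted term and Proposition \ref{prop1} to pass between norm and modular), one gets that for $\lambda>0$ small there is $\rho>0$ with
$$\inf\{\hat\varphi_\lambda(u):|u-\overline{u}|=\rho\}>\hat\varphi_\lambda(\overline{u}),$$
a local linking around $\overline{u}$; this is where smallness of $\lambda$ is used. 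For the other geometric ingredient, hypothesis $H_1(ii)$ ($(p_+-1)$-superlinearity of $F$) gives that $\hat\varphi_\lambda(t\hat u)\to-\infty$ along a suitable direction $\hat u\ge \overline{u}$, so the mountain pass geometry holds. The $C$-condition for $\hat\varphi_\lambda$ follows from $H_1(iii)$ (the quasimonotonicity hypothesis, which is the substitute for the Ambrosetti--Rabinowitz condition): one shows a $C$-sequence is bounded by testing with $u_n$ and with $\overline{u}$ and invoking $H_1(iii)$ to control $\int(f(z,u_n)u_n-p_+F(z,u_n))dz$, then uses the $(S)_+$-property of $A_{p(z)}+A_{q(z)}$ (Proposition \ref{prop2}) to extract a strongly convergent subsequence. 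The mountain pass theorem then yields $u_\lambda\in K_{\hat\varphi_\lambda}$ with $\hat\varphi_\lambda(u_\lambda)>\hat\varphi_\lambda(\overline{u})$, in particular $u_\lambda\ne\overline{u}$ and $u_\lambda\not\equiv 0$.

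Finally I would show $u_\lambda\in S_\lambda$, i.e. that the solution of the truncated problem is a genuine positive solution of $(P_\lambda)$. Testing the equation for $u_\lambda$ with $(\overline{u}-u_\lambda)^+$ and comparing with the equation for $\overline{u}$, using that $\overline{u}$ is a subsolution and that $f\ge 0$, gives $\overline{u}\le u_\lambda$; hence the truncation is inactive and $u_\lambda$ solves $(P_\lambda)$ exactly, with $u_\lambda\ge\overline{u}\in{\rm int}\,C_+>0$. One also checks $u_\lambda^{-\eta(\cdot)}h\in L^1(\Omega)$ for all $h\in W^{1,p(z)}(\Omega)$ from $u_\lambda\ge\overline{u}$ and $0<\eta(\cdot)<1$, so $u_\lambda$ is a solution in the sense defined in Section 2. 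This proves $\lambda\in\mathcal L$ for all small $\lambda>0$, so $\mathcal L\ne\emptyset$. The main obstacle I expect is verifying the $C$-condition: without the Ambrosetti--Rabinowitz condition the boundedness of $C$-sequences is delicate and must be extracted entirely from $H_1(iii)$ together with the interplay between the $p_+$-homogeneity used in defining $\xi_\lambda$ and the genuinely variable exponent $p(\cdot)$ appearing in $\gamma_{p(z)}$; the secondary difficulty is establishing the local-linking inequality near $\overline{u}$ with constants that are uniform enough to survive the limit $\lambda\to 0^+$.
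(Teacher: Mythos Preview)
Your strategy is different from the paper's and is considerably heavier than what is needed. The paper does \emph{not} invoke the mountain pass theorem here at all. Instead it constructs an explicit \emph{upper} barrier: let $\hat u\in{\rm int}\,C_+$ be the unique solution of
\[
-\Delta_{p(z)}\hat u-\Delta_{q(z)}\hat u+\xi(z)\hat u^{\,p(z)-1}=\overline u(z)^{-\eta(z)}+1\quad\text{in }\Omega,
\]
and check that $\overline u\le\hat u$. Since $f(\cdot,\hat u(\cdot))\in L^\infty(\Omega)$, one picks $\lambda_0>0$ so that $0\le\lambda f(z,\hat u(z))\le 1$ for all $\lambda\in(0,\lambda_0]$, which makes $\hat u$ a supersolution of $(P_\lambda)$ in that range. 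Truncating the reaction both below at $\overline u$ and above at $\hat u$ produces a bounded Carath\'eodory nonlinearity, hence the associated functional is \emph{coercive} and sequentially weakly lower semicontinuous. One simply minimizes (Weierstrass--Tonelli), and testing the Euler equation with $(\overline u-\tilde u)^+$ and $(\tilde u-\hat u)^+$ traps the minimizer in $[\overline u,\hat u]$, so it solves $(P_\lambda)$. No $C$-condition, no mountain pass geometry, no smallness juggling.

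Your one-sided truncation plus mountain pass is essentially the machinery the paper reserves for Proposition~\ref{prop15} (the \emph{second} positive solution), where the already-found solution $u_0\in S_\lambda$ --- a genuine critical point and local minimizer of the truncated functional --- anchors the geometry. In your setting there is a real obstacle you underestimate: $\overline u$ is \emph{not} a critical point of your $\hat\varphi_\lambda$, since $\langle\hat\varphi_\lambda'(\overline u),h\rangle=-\lambda\int_\Omega f(z,\overline u)h\,dz$, and by $H_1(iv)$ this is strictly negative for $0\le h\not\equiv 0$. Hence the standard ``local minimizer $\Rightarrow$ strict wall on a small sphere'' argument does not apply; for every small radius $\rho$ the functional falls below $\hat\varphi_\lambda(\overline u)$ along the descent direction. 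One can probably still locate a workable radius by playing the $O(\lambda)$ linear defect against the strict convexity of the principal part, but this requires a quantitative estimate absent from your outline and sensitive to whether $p_-\ge 2$. The paper's two-sided truncation bypasses the issue completely.
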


\begin{proof}
  Let $\overline{u}\in{\rm int}\,C_+$ be the unique positive solution of problem \eqref{eq5} produced in Proposition \ref{prop8}.

We consider the following auxiliary problem:
\begin{equation}\label{eq18}
  \left\{%an
\begin{array}{lll}
-\Delta_{p(z)}u(z)-\Delta_{q(z)}u(z)+\xi(z)u(z)^{p(z)-1}=\overline{u}(z)^{-\eta(z)}+1 \text{ in } \Omega,\\
\di \frac{\partial u}{\partial n}=0 \mbox{ on }\partial\Omega,\ u>0.
\end{array}
\right.
\end{equation}

From Proposition \ref{prop5}, we know that this problem has a unique positive solution $\hat{u}\in{\rm int}\,C_+$. We have
\begin{eqnarray}\nonumber
  && \langle A_{p(z)}(\overline{u}),(\overline{u}-\hat{u})^+\rangle+\langle A_{q(z)}(\overline{u}),(\overline{u}-\hat{u})^+\rangle+\int_\Omega \xi(z)\overline{u}^{p(z)-1}(\overline{u}-\hat{u})^+dz \\ \nonumber
   &=& \int_\Omega \overline{u}^{-\eta(z)}(\overline{u}-\hat{u})^+dz \mbox{ (see \eqref{eq5}) } \\ \nonumber
   &\leq& \int_\Omega \left[\overline{u}^{-\eta(z)}+1\right](\overline{u}-\hat{u})^+dz \\ \nonumber
   &=& \langle A_{p(z)}(\hat{u}),(\overline{u}-\hat{u})^+\rangle+\langle A_{q(z)}(\hat{u}),(\overline{u}-\hat{u})^+\rangle+\int_\Omega \xi(z)\hat{u}^{p(z)-1}(\overline{u}-\hat{u})^+dz \\ \nonumber
  && \mbox{ (since $\hat{u}\in{\rm int}\,C_+$ solves \eqref{eq18}), } \\
  \Rightarrow \overline{u}&\leq&\hat{u}. \label{eq19}
\end{eqnarray}

Since $\hat{u}\in{\rm int}\,C_+$, on account of hypothesis $H_1(i)$, we have
$$
0\leq f(\cdot,\hat{u}(\cdot))\in L^{\infty}(\Omega).
$$

So, we can find $\lambda_0>0$ such that
\begin{equation}\label{eq20}
  0\leq \lambda f(z,\hat{u}(z))\leq1 \mbox{ for a.a. }z\in\Omega, \mbox{ all }\lambda\in(0,\lambda_0].
\end{equation}

We introduce the Carath\' eodory function $\beta_\lambda(z,x)$ defined by
\begin{equation}\label{eq21}
  \beta_\lambda(z,x)=\left\{
                       \begin{array}{ll}
                         \overline{u}(z)^{-\eta(z)}+\lambda f(z,\overline{u}(z)), & \hbox{ if } x<\overline{u}(z) \\
                         x^{-\eta(z)}+\lambda f(z,x), & \hbox{ if }\overline{u}(z)\leq x\leq \hat{u}(z) \mbox{ (see \eqref{eq19}) } \\
                         \hat{u}(z)^{-\eta(z)}+\lambda f(z,\hat{u}(z)), & \hbox{ if }\hat{u}(z)<x.
                       \end{array}
                     \right.
\end{equation}

We set $B_\lambda(z,x)=\displaystyle{\int_0^x}\beta_\lambda(z,s)ds$ and consider the $C^1$-functional $\tilde{\Psi}_\lambda: W^{1,p(z)}(\Omega)\to \RR$ defined by
$$
\tilde{\Psi}_\lambda(u)=\gamma_{p(z)}(u)+\int_\Omega \frac{1}{q(z)}|Du|^{q(z)}dz-\int_\Omega B_\lambda(z,u) dz \mbox{ for all }u\in W^{1,p(z)}(\Omega), \mbox{ all }\lambda\in(0,\lambda_0].
$$

From \eqref{eq19} it is clear that $\tilde{\Psi}_\lambda(\cdot)$ is coercive. Also, it is sequentially weakly lower semicontinuous. Hence, we can find $\tilde{u}\in W^{1,p(z)}(\Omega)$ such that
\begin{eqnarray*}
  \tilde{\Psi}_\lambda(\tilde{u}) &=& \min\left\{\tilde{\Psi}_\lambda(u):\:u\in W^{1,p(z)}(\Omega)\right\}, \\
  \Rightarrow \tilde{\Psi}'_\lambda(\tilde{u}) &=& 0,
\end{eqnarray*}
\begin{equation}\label{eq22}
  \Rightarrow \langle A_{p(z)}(\tilde{u}),h\rangle+\langle A_{q(z)}(\tilde{u}),h\rangle+\int_\Omega \xi(z)|\tilde{u}|^{p(z)-2}\tilde{u}hdz=\int_\Omega \beta_\lambda(z,\tilde{u})hdz
\end{equation}
for all $h\in W^{1,p(z)}(\Omega)$.

In \eqref{eq22} first we choose $h=(\overline{u}-\tilde{u})^+\in W^{1,p(z)}(\Omega)$. We have
\begin{eqnarray*}
% \nonumber % Remove numbering (before each equation)
  && \langle A_{p(z)}(\tilde{u}),(\overline{u}-\tilde{u})^+\rangle+\langle A_{q(z)}(\tilde{u}),(\overline{u}-\tilde{u})^+\rangle+\int_\Omega \xi(z)|\tilde{u}|^{p(z)-2}\tilde{u}(\overline{u}-\tilde{u})^+dz \\
   &=& \int_\Omega \left[\overline{u}^{-\eta(z)}+\lambda f(z,\overline{u})\right](\overline{u}-\tilde{u})^+ dz \mbox{ (see \eqref{eq21}) } \\
   &\geq& \int_\Omega \overline{u}^{-\eta(z)}(\overline{u}-\tilde{u})^+dz \mbox{ (see hypothesis $H_1(i)$) } \\
   &=& \langle A_{p(z)}(\overline{u}),(\overline{u}-\tilde{u})^+\rangle+\langle A_{q(z)},(\overline{u}-\tilde{u})^+\rangle+\int_\Omega \xi(z) \overline{u}^{p(z)-1}(\overline{u}-\tilde{u})^+dz, \\
   \Rightarrow \overline{u}&\leq & \tilde{u}.
\end{eqnarray*}

Next, in \eqref{eq22} we choose $h=(\tilde{u}-\hat{u})^+\in W^{1,p(z)}(\Omega)$. We have
\begin{eqnarray*}
% \nonumber % Remove numbering (before each equation)
  && \langle A_{p(z)}(\tilde{u}), (\tilde{u}-\hat{u})^+\rangle+\langle A_{q(z)},(\tilde{u}-\hat{u})^+\rangle+\int_\Omega \xi(z)\tilde{u}^{p(z)-1}(\tilde{u}-\hat{u})^+dz \\
   &=& \int_\Omega\left[\hat{u}^{-\eta(z)}+\lambda f(z,\hat{u})\right](\tilde{u}-\hat{u})^+dz \mbox{(see \eqref{eq21})} \\
   &\leq& \int_\Omega \left[\hat{u}^{-\eta(z)}+1\right](\tilde{u}-\hat{u})^+dz \mbox{ (since $\lambda\in(0,\lambda_0]$, see \eqref{eq20}) } \\
   &\leq& \int_\Omega\left[\overline{u}^{-\eta(z)}+1\right](\tilde{u}-\hat{u})^+dz \mbox{ (see \eqref{eq19}) } \\
   &=& \langle A_{p(z)}(\hat{u}),(\tilde{u}-\hat{u})^+\rangle+\langle A_{q(z)}(\hat{u}),(\tilde{u}-\hat{u})^+\rangle+\int_\Omega \xi(z)\tilde{u}^{p(z)-1}(\tilde{u}-\hat{u})^+dz, \\
  \Rightarrow \tilde{u} &\leq& \hat{u}.
\end{eqnarray*}

So, we have proved that
\begin{equation}\label{eq23}
  \tilde{u}\in[\overline{u},\hat{u}].
\end{equation}

From \eqref{eq23}, \eqref{eq21} and \eqref{eq22}, we infer that
\begin{eqnarray*}
% \nonumber % Remove numbering (before each equation)
  && \tilde{u}\in S_\lambda, \\
  &\Rightarrow& (0,\lambda_0]\subseteq\mathcal{L}\not=\emptyset,
\end{eqnarray*}
which concludes the proof.
\end{proof}

\begin{prop}\label{prop10}
  If hypotheses $H_0$, $H_1$ hold and $\lambda\in\mathcal{L}$, then $\overline{u}\leq u$ for all $u\in S_\lambda$.
\end{prop}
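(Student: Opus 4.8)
The plan is to show that the unique solution $\overline{u}\in{\rm int}\,C_+$ of the purely singular problem \eqref{eq5} is a lower bound for every positive solution $u\in S_\lambda$. Fix $\lambda\in\mathcal{L}$ and $u\in S_\lambda$. Since $u$ solves $(P_\lambda)$ and $f\geq0$ (hypothesis $H_1(i)$), $u$ is a (weak) supersolution of the singular problem \eqref{eq5}; this is the structural fact driving the whole argument. The idea is to run the same truncation-and-minimization scheme already used in Proposition \ref{prop9}, but truncating from above at $u$ rather than at $\hat{u}$, so that the minimizer is squeezed into the order interval $[0,u]$ and, by uniqueness in Proposition \ref{prop8}, must coincide with $\overline{u}$.

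Concretely, first I would recall (as in the proof of Proposition \ref{prop8} via Fan's regularity) that $u\in{\rm int}\,C_+$, so $u^{-\eta(\cdot)}\in L^\infty_{\mathrm{loc}}$ in the sense needed and truncations at $u$ are admissible in $W^{1,p(z)}(\Omega)$. Then introduce the Carath\'eodory truncation
\begin{equation*}
  \hat{e}(z,x)=\left\{
    \begin{array}{ll}
      (x^+ + \varepsilon)^{-\eta(z)}, & \hbox{if } x\leq u(z)\\[1mm]
      (u(z)+\varepsilon)^{-\eta(z)}, & \hbox{if } u(z)<x,
    \end{array}\right.
\end{equation*}
set $\hat{E}(z,x)=\int_0^x\hat{e}(z,s)ds$, and consider the $C^1$ coercive, sequentially weakly lower semicontinuous functional
$$
\hat{\tau}_\varepsilon(v)=\gamma_{p(z)}(v)+\int_\Omega\frac{1}{q(z)}|Dv|^{q(z)}dz-\int_\Omega \hat{E}(z,v)\,dz .
$$
By Weierstrass–Tonelli it has a global minimizer $\hat{u}_\varepsilon$. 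Testing $\hat{\tau}_\varepsilon'(\hat{u}_\varepsilon)=0$ with $-\hat{u}_\varepsilon^-$ gives $\hat{u}_\varepsilon\geq0$, $\hat{u}_\varepsilon\neq0$; testing with $(\hat{u}_\varepsilon-u)^+$ and using that $u$ is a supersolution of \eqref{eq5} (together with $f\geq0$ to absorb the perturbation term), exactly as in Proposition \ref{prop7}, yields $\hat{u}_\varepsilon\leq u$. Hence $\hat{u}_\varepsilon\in[0,u]$, so on its support the truncation is inactive and $\hat{u}_\varepsilon$ solves problem \eqref{eq8ep}; by the uniqueness in Proposition \ref{prop6}, $\hat{u}_\varepsilon=\overline{u}_\varepsilon\leq u$ for every $\varepsilon\in(0,1]$.

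Finally, let $\varepsilon=\varepsilon_n\to0^+$. By Proposition \ref{prop7} the family $\{\overline{u}_{\varepsilon_n}\}$ is monotone, and by Proposition \ref{prop8} it converges in $C^1(\overline{\Omega})$ to $\overline{u}\in{\rm int}\,C_+$, the unique solution of \eqref{eq5}. Passing to the limit in $\overline{u}_{\varepsilon_n}\leq u$ gives $\overline{u}\leq u$. Since $u\in S_\lambda$ was arbitrary, $\overline{u}\leq u$ for all $u\in S_\lambda$. The only delicate point is the comparison $\hat{u}_\varepsilon\leq u$: one must check that the weak formulation of $u$ as a solution of $(P_\lambda)$ can legitimately be tested against $(\hat{u}_\varepsilon-u)^+\in W^{1,p(z)}(\Omega)$ and that the singular term $u^{-\eta(\cdot)}(\hat{u}_\varepsilon-u)^+$ is integrable — which holds because $u\in{\rm int}\,C_+$, so $u$ is bounded below by a positive constant on the (compact) set where $(\hat{u}_\varepsilon-u)^+>0$ after the usual localization; beyond this, the argument is a routine repetition of the monotone-truncation technique already deployed above.
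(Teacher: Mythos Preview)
Your argument is correct and runs on the same truncation-plus-uniqueness idea as the paper, with one cosmetic difference in execution: the paper truncates the singular reaction itself (setting $\hat\mu_+(z,x)=x^{-\eta(z)}$ for $0\le x\le u(z)$ and $=u(z)^{-\eta(z)}$ for $x>u(z)$), solves the resulting truncated \emph{singular} problem directly to obtain some $u^*$, shows $u^*\le u$ by exactly your comparison test, and then concludes $u^*=\overline u$ via the uniqueness in Proposition~\ref{prop8}. You instead carry out the comparison at the level of the $\varepsilon$-approximations and pass to the limit only at the end. Your route has the virtue of reusing Propositions~\ref{prop6}--\ref{prop8} verbatim; the paper's route is shorter but tacitly re-runs the Section~3 machinery for the truncated problem.

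One point to fix: drop the preliminary appeal to $u\in{\rm int}\,C_+$. That fact is established only in Proposition~\ref{prop11}, whose proof \emph{uses} the present proposition, so invoking it here would be circular. Fortunately your argument does not need it: the truncation $\hat e(z,\cdot)$ is Carath\'eodory and uniformly bounded by $\varepsilon^{-\eta_+}$ as soon as $u$ is merely measurable, and the ``delicate point'' you flag---integrability of $u^{-\eta(\cdot)}(\hat u_\varepsilon-u)^+$---is already built into the definition of $u\in S_\lambda$ (namely $u^{-\eta(\cdot)}h\in L^1(\Omega)$ for every $h\in W^{1,p(z)}(\Omega)$), so no positivity bound or localization is required.
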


\begin{proof}
  Let $u\in S_\lambda$ and consider the following function
\begin{equation}\label{eq24}
  \hat{\mu}_+(z,x)=\left\{
                     \begin{array}{ll}
                       x^{-\eta(z)}, & \hbox{ if }0\leq x\leq u(z) \\
                       u(z)^{-\eta(z)}, & \hbox{ if }u(z)<x.
                     \end{array}
                   \right.
\end{equation}

Evidently, this function is Carath\'eodory on $\Omega\times(\RR\setminus\{0\})$ and is singular at $x=0$. We consider the purely singular problem
\begin{equation}\label{eq25}
  \left\{%an
\begin{array}{lll}
-\Delta_{p(z)}u(z)-\Delta_{q(z)}u(z)+\xi(z)u(z)^{p(z)-1}=\hat{\mu}_+(z,u(z)) \text{ in } \Omega,\\
\di\frac{\partial u}{\partial n}=0 \mbox{ on }\partial\Omega,\ u>0.
\end{array}
\right.
\end{equation}

Reasoning as for problem \eqref{eq5}, we show that problem \eqref{eq25} admits a positive solution $u^*\in{\rm int}\,C_+$ (see also Papageorgiou, R\u adulescu \& Repov\v s \cite[Proposition 10]{16Pap-Rad-Rep}). We have
\begin{eqnarray*}
  && \langle A_{p(z)}(u^*),(u^*-u)^+\rangle+\langle A_{q(z)}(u^*),(u^*-u)^+\rangle+\int_\Omega \xi(z)(u^*)^{p(z)-1}(u^*-u)^+dz \\
  &=& \int_\Omega u^{-\eta(z)}(u^*-u)^+dz \mbox{ (see \eqref{eq24}) } \\
  && \mbox{(recall that, by definition, $u^{-\eta(z)}h\in L^1(\Omega)$ for all $h\in W^{1,p(z)}(\Omega)$)} \\
  &\leq& \int_\Omega \left[u^{-\eta(z)}+\lambda f(z,u)\right](u^*-u)^+dz \\
  &=& \langle A_{p(z)}(u),(u^*-u)^+\rangle+\langle A_{q(z)}(u),(u^*-u)^+\rangle+\int_\Omega \xi(z) u^{p(z)-1}(u^*-u)^+dz \mbox{ (since $u\in S_\lambda$), }\\
  \Rightarrow u^*&\leq& u.
\end{eqnarray*}

So, we have
\begin{equation}\label{eq26}
  u^*\in[0,u],\;u^*\not=0.
\end{equation}

From \eqref{eq26}, \eqref{eq24} and Proposition \ref{prop8}, it follows that
\begin{eqnarray*}
% \nonumber % Remove numbering (before each equation)
  && u^*=\overline{u} \\
  &\Rightarrow& \overline{u}\leq u \mbox{ for all }u\in S_\lambda.
\end{eqnarray*}
The proof is now complete.
\end{proof}

According to the previous proposition, if $\lambda\in \mathcal{L}$ and $u\in S_\lambda$, then
$$
0\leq u^{-\eta(z)}\leq \overline{u}^{-\eta(z)} \mbox{ with } \overline{u}(\cdot)^{-\eta(\cdot)}\in L^\infty(\Omega).
$$

Then the anisotropic regularity theory (see Fan \cite{8Fan}) and the anisotropic maximum principle (see Zhang \cite{23Zha}), imply the following result concerning the solution set $S_\lambda$.

\begin{prop}\label{prop11}
  If hypotheses $H_0$, $H_1$ hold and $\lambda\in\mathcal{L}$, then $S_\lambda\subseteq{\rm int}\,C_+$.
\end{prop}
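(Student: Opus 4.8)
The plan is to combine the a priori lower bound from Proposition \ref{prop10} with the anisotropic regularity theory of Fan \cite{8Fan} and the anisotropic maximum principle of Zhang \cite{23Zha}, exactly as in the argument already used for problem \eqref{eq6} inside the proof of Proposition \ref{prop5}. Fix $\lambda\in\mathcal{L}$ and take an arbitrary $u\in S_\lambda$. By Proposition \ref{prop10} we have $\overline{u}\leq u$, hence
$$
0\leq u^{-\eta(z)}\leq\overline{u}(z)^{-\eta(z)}\quad\mbox{for a.a. }z\in\Omega,
$$
and since $\overline{u}\in{\rm int}\,C_+$ we have $\overline{u}(\cdot)^{-\eta(\cdot)}\in L^\infty(\Omega)$. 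Moreover, $u\in S_\lambda$ means $u$ is a (weak) solution of $(P_\lambda)$, so $u$ is bounded: the right-hand side $u^{-\eta(z)}+\lambda f(z,u)$ belongs to $L^\infty(\Omega)$ — the singular term is controlled by $\overline{u}(\cdot)^{-\eta(\cdot)}\in L^\infty(\Omega)$ and the perturbation term by hypothesis $H_1(i)$ together with the anisotropic Sobolev embedding, after the usual bootstrap (Proposition 3.1 of Gasinski \& Papageorgiou \cite{9Gas-Pap}) gives $u\in L^\infty(\Omega)$.

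Next I would feed this $L^\infty$-bound into the global $C^{1,\alpha}$-regularity result of Fan \cite[Theorem 1.3]{8Fan} (see also Lieberman \cite{13Lieberman}): since the reaction is now in $L^\infty(\Omega)$ and the Neumann boundary condition is the conormal one for the anisotropic $(p,q)$-operator, we conclude $u\in C^{1,\alpha}(\overline{\Omega})$ for some $\alpha\in(0,1)$, in particular $u\in C_+\setminus\{0\}$. Finally, rewriting the equation as
$$
-\Delta_{p(z)}u-\Delta_{q(z)}u = -\xi(z)u^{p(z)-1}+u^{-\eta(z)}+\lambda f(z,u)\geq -\|\xi\|_\infty\,u^{p(z)-1}\quad\mbox{in }\Omega,
$$
(using $u^{-\eta(z)}\geq0$, $f\geq0$), the anisotropic maximum principle of Zhang \cite{23Zha} yields $u(z)>0$ for all $z\in\overline{\Omega}$, i.e. $u\in{\rm int}\,C_+$. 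Since $u\in S_\lambda$ was arbitrary, $S_\lambda\subseteq{\rm int}\,C_+$.

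The only genuinely delicate point — and the one I would be most careful about — is verifying that the conormal/Neumann boundary condition and the unbalanced anisotropic structure are actually covered by the cited regularity and maximum-principle statements; but this is precisely the situation already invoked for $\tilde u_\varepsilon$ in Proposition \ref{prop5} and for $\overline{u}_n$ in Proposition \ref{prop8}, so no new input is needed. Everything else is a routine application of results stated earlier in the paper, which is why the statement is phrased as an immediate consequence of Proposition \ref{prop10}.
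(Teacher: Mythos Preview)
Your proposal is correct and matches the paper's own argument essentially verbatim: the paper also derives $0\leq u^{-\eta(z)}\leq\overline{u}^{-\eta(z)}\in L^\infty(\Omega)$ from Proposition~\ref{prop10} and then invokes the anisotropic regularity theory of Fan \cite{8Fan} together with the anisotropic maximum principle of Zhang \cite{23Zha}. The additional detail you supply (the $L^\infty$-bound via \cite{9Gas-Pap} before applying Fan) is implicit in the paper's earlier arguments and only makes the reasoning more explicit.
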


Let $\lambda^*=\sup \mathcal{L}$.

\begin{prop}\label{prop12}
  If hypotheses $H_0$, $H_1$ hold, then $\lambda^*<\infty$.
\end{prop}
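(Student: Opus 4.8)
The plan is to show that large values of $\lambda$ are not admissible by producing, for admissible $\lambda$, a solution whose presence forces an inequality that fails when $\lambda$ is too big. First I would fix $\lambda \in \mathcal{L}$ and $u \in S_\lambda$; by Proposition \ref{prop11} we have $u \in {\rm int}\, C_+$, and by Proposition \ref{prop10} we have $\overline{u} \le u$. The key analytic input is hypothesis $H_1(iv)$: there is a constant $\hat{\mu}_s > 0$ (with $s$ chosen in terms of a fixed reference, e.g. $s = \min_{\overline\Omega}\overline{u} > 0$) such that $f(z,x) \ge \hat{\mu}_s$ for a.a. $z \in \Omega$ and all $x \ge s$. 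Since $u \ge \overline{u} \ge s$ pointwise, we get $\lambda f(z,u(z)) \ge \lambda\hat{\mu}_s$ for a.a. $z$. Combined with $u^{-\eta(z)} > 0$, the reaction in $(P_\lambda)$ satisfies
$$
-\Delta_{p(z)}u - \Delta_{q(z)}u + \xi(z)u^{p(z)-1} = u^{-\eta(z)} + \lambda f(z,u) \ge \lambda \hat{\mu}_s \quad \text{in } \Omega.
$$

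Next I would test this (weak) inequality against a suitable nonnegative function. The natural choice is $h \equiv 1 \in W^{1,p(z)}(\Omega)$ (constants lie in the space since $\Omega$ is bounded), which kills both $A_{p(z)}(u)$ and $A_{q(z)}(u)$ and leaves
$$
\int_\Omega \xi(z)\, u(z)^{p(z)-1}\, dz \ge \lambda\hat{\mu}_s\, |\Omega|.
$$
The point is now to bound the left-hand side independently of $\lambda$. Using $\overline{u} \le u$ is the wrong direction; instead I would obtain an a priori upper bound on $u$ on any compact subset. Actually the cleanest route is a different one: work directly with the structure. Alternatively, and more robustly, one argues by the standard sub-supersolution/eigenvalue comparison: since $f \ge 0$ and $u^{-\eta(z)} \ge 0$, and one can find (using the superlinear lower bound from $H_1(iv)$ near $0$, namely $f(z,x) \ge \hat{c}\, x^{q_+-1}$ for small $x$, together with $f(z,x) \ge \hat{\mu}_s$ for $x$ bounded below) a global minorant of the form $\lambda f(z,x) \ge \lambda c_0 x^{p_+-1}$ fails — so instead I keep the constant test function approach and bound $\int_\Omega \xi u^{p(z)-1}$ using an $L^\infty$ estimate. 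By Proposition 3.1 of Gasinski \& Papageorgiou, any $u \in S_\lambda$ with $u^{-\eta(z)} \le \overline{u}^{-\eta(z)} \in L^\infty(\Omega)$ and $f(z,u) \le a(z)[1 + u^{r(z)-1}]$ is bounded, but the bound depends on $\lambda$, so this alone does not close the argument.

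The decisive step, and the main obstacle, is therefore to extract a $\lambda$-\emph{independent} conclusion. The correct mechanism is to test against $\overline{u}$ itself (or against $u$) and exploit that $\overline{u} \in {\rm int}\, C_+$ is fixed: choosing $h = \overline{u}$ in the weak formulation of $(P_\lambda)$ for $u \in S_\lambda$, using monotonicity of $A_{p(z)}, A_{q(z)}$ and the ordering $\overline{u} \le u$ to compare the principal part terms with those of $\overline{u}$ (which satisfies \eqref{eq5}), one arrives at an inequality whose right side contains $\lambda \int_\Omega f(z,u)\overline{u}\, dz \ge \lambda \hat{\mu}_s \int_\Omega \overline{u}\, dz$, a positive multiple of $\lambda$, while the left side is controlled by quantities depending only on $\overline{u}$ (namely $\langle V(\overline{u}), \overline{u}\rangle$ plus $\int_\Omega \overline{u}^{-\eta(z)}\overline{u}\, dz$, both finite and independent of $\lambda$). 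This yields $\lambda \hat{\mu}_s \int_\Omega \overline{u}\, dz \le C$ for a constant $C = C(\overline{u})$, hence
$$
\lambda \le \frac{C}{\hat{\mu}_s \int_\Omega \overline{u}\, dz} =: \lambda_\sharp < \infty
$$
for every $\lambda \in \mathcal{L}$, so $\lambda^* = \sup \mathcal{L} \le \lambda_\sharp < \infty$. The hard part is setting up the comparison of the $(p(z),q(z))$-Laplacian terms carefully so that the left-hand side genuinely does not see $\lambda$: one must either use that $u \ge \overline{u}$ together with the equation for $\overline{u}$ to replace $\langle A_{p(z)}(u) + A_{q(z)}(u), \overline{u}\rangle$ by something $\overline{u}$-intrinsic, or instead test the equation for $\overline{u}$ against $u - \overline{u} \ge 0$ and the equation for $u$ against $\overline{u}$ and subtract, which is the standard trick and I would use that.
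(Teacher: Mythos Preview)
Your proposal has a genuine gap at exactly the place you flag as the ``hard part.'' When you test the equation for $u\in S_\lambda$ against $h=\overline{u}$ (or $h\equiv 1$), the left-hand side contains
\[
\langle A_{p(z)}(u),\overline{u}\rangle+\langle A_{q(z)}(u),\overline{u}\rangle+\int_\Omega \xi(z)\,u^{p(z)-1}\overline{u}\,dz,
\]
and every one of these terms depends on $u$, hence implicitly on $\lambda$. Monotonicity of $A_{p(z)}$ gives $\langle A_{p(z)}(u)-A_{p(z)}(\overline{u}),u-\overline{u}\rangle\ge 0$, but that does \emph{not} bound $\langle A_{p(z)}(u),\overline{u}\rangle$ by anything $\overline{u}$-intrinsic; the cross term $\int_\Omega |Du|^{p(z)-2}(Du,D\overline{u})_{\RR^N}\,dz$ scales with $|Du|^{p(z)-1}$ and the potential term with $u^{p(z)-1}$. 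The ``subtract the two tested equations'' idea does not cancel these, because the pairing is not symmetric (you have $\langle A_{p(z)}(u),\overline{u}\rangle$ versus $\langle A_{p(z)}(\overline{u}),u\rangle$) and there is no Picone-type identity available for variable exponents. So the constant $C=C(\overline{u})$ you want simply does not exist from this argument.

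The paper's proof is entirely different and is \emph{pointwise} rather than integral. One first uses the superlinearity hypotheses $H_1(ii),(iii),(iv)$ to find $\lambda_0>0$ such that $\lambda_0 f(z,x)\ge \xi(z)\,x^{p(z)-1}$ for a.a.\ $z\in\Omega$ and all $x\ge 0$. Then, assuming $\lambda>\lambda_0$ is admissible with solution $u_\lambda\in{\rm int}\,C_+$, one sets $m_\lambda=\min_{\overline{\Omega}}u_\lambda>0$ and compares $u_\lambda$ with the constant $m_\lambda^\delta=m_\lambda+\delta$ via the differential inequalities (using hypothesis $H_1(v)$ and the choice of $\lambda_0$). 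The strong comparison principle (Proposition~\ref{prop3}) yields $u_\lambda-m_\lambda^\delta\in D_+$ for small $\delta>0$, contradicting the definition of $m_\lambda$. The point is that constants annihilate the differential operators, so the comparison reduces to an inequality between the zero-order terms, where the extra factor $(\lambda-\lambda_0)f(z,m_\lambda)\ge (\lambda-\lambda_0)\hat{\mu}_{m_\lambda}>0$ provides the strict gap needed for Proposition~\ref{prop3}. You should rewrite along these lines.
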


\begin{proof}
  On account of hypotheses $H_1(ii),(iii),(iv)$, we can find $\lambda_0>0$ big such that
\begin{equation}\label{eq27}
  \lambda_0f(z,x)\geq\xi(z)x^{p(z)-1} \mbox{ for a.a. }z\in\Omega,\mbox{ all }x\geq0.
\end{equation}

Let $\lambda>\lambda_0$ and suppose that $\lambda\in \mathcal{L}$. Then we can find $u_\lambda\in S_\lambda\subseteq{\rm int}\,C_+$ (see Proposition \eqref{prop11}). Let $m_\lambda=\displaystyle{\min_{\overline{\Omega}}u_\lambda>0}$ (since $u_\lambda\in{\rm int}\,C_+$). Let $\delta\in(0,1]$, $\rho=\max\{\|u_\lambda\|_\infty,m_\lambda+1\}$ and let $\hat{\xi}_\rho>0$ be as postulated by hypothesis $H_1(v)$. We set $m_\lambda^\delta=m_\lambda+\delta$
\begin{eqnarray*}
% \nonumber % Remove numbering (before each equation)
   && -\Delta_{p(z)}m_\lambda^\delta-\Delta_{q(z)}m_\lambda^\delta+[\xi(z)+\hat{\xi}_\rho](m_\lambda^\delta)^{p(z)-1}-
(m_\lambda^\delta)^{-\eta(z)} \\
   &\leq& [\xi(z)+\hat{\xi}_\rho]m_\lambda^{p(z)-1}+\chi(\delta) \mbox{ with }\chi(\delta)\to0^+ \mbox{ as }\delta\to0^+ \\
   &\leq& \lambda_0 f(z,m_\lambda)+\hat{\xi}_\rho m_\lambda^{p(z)-1}+\chi(\delta) \mbox{ (see \eqref{eq27}) } \\
   &=& \lambda f(z,m_\lambda)+\hat{\xi}_\rho m_\lambda^{p(z)-1}-[\lambda-\lambda_0]f(z,m_\lambda)+\chi(\delta) \\
   &\leq& \lambda f(z,m_\lambda)+\hat{\xi}_\rho m_\lambda^{p(z)-1}-[\lambda-\lambda_0]\hat{\mu}_\lambda+\chi(\delta)\\
  && \mbox{ with $\hat{\mu}_\lambda>0$ (see hypothesis $H_1(iv)$) } \\
  &\leq& \lambda f(z,u_\lambda)+\hat{\xi}_\rho u_\lambda^{p(z)-1} \mbox{ for }\delta>0 \mbox{ small } \mbox{ (see hypothesis $H_1(v)$)} \\
  &=& -\Delta_{p(z)}u_\lambda-\Delta_{q(z)}u_\lambda +[\xi(z)+\hat{\xi}_\rho]u_\lambda^{p(z)-1}-u_\lambda^{-\eta(z)} \mbox{( since $u_\lambda\in S_\lambda$),} \\
  &\Rightarrow& u_\lambda-m_\lambda^\delta \in D_+ \mbox{ for }\delta>0 \mbox{ small }\mbox{ (see Proposition \ref{prop3})},
\end{eqnarray*}
a contradiction to the definition of $m_\lambda>0$.

So, we have $\lambda^*\leq \lambda_0<\infty$.
\end{proof}

Next, we show that $\mathcal{L}$ is, in fact, an interval.

\begin{prop}\label{prop13}
  If hypotheses $H_0$, $H_1$ hold, $\lambda\in\mathcal{L}$ and $0<\mu<\lambda$, then $\mu\in \mathcal{L}$.
\end{prop}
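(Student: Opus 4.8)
The plan is to use the fact that $\lambda\in\mathcal{L}$ to produce a solution $u_\lambda\in S_\lambda\subseteq{\rm int}\,C_+$ (via Proposition \ref{prop11}) and then show that $u_\lambda$ serves as an upper barrier for a truncated problem associated with the parameter $\mu<\lambda$. Since $f\geq0$ (hypothesis $H_1(i)$) and $\mu<\lambda$, one expects $u_\lambda$ to be a supersolution for $(P_\mu)$, while the purely singular solution $\overline{u}\in{\rm int}\,C_+$ from Proposition \ref{prop8} is a natural lower barrier. The key point to check first is the ordering $\overline{u}\leq u_\lambda$, which is exactly the content of Proposition \ref{prop10}. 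Thus the order interval $[\overline{u},u_\lambda]$ is the correct arena in which to look for a solution of $(P_\mu)$.

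Next I would introduce a truncation of the reaction for problem $(P_\mu)$: set
$$
\hat{\beta}_\mu(z,x)=\left\{
\begin{array}{ll}
\overline{u}(z)^{-\eta(z)}+\mu f(z,\overline{u}(z)), & \hbox{if } x<\overline{u}(z),\\
x^{-\eta(z)}+\mu f(z,x), & \hbox{if } \overline{u}(z)\leq x\leq u_\lambda(z),\\
u_\lambda(z)^{-\eta(z)}+\mu f(z,u_\lambda(z)), & \hbox{if } u_\lambda(z)<x,
\end{array}
\right.
$$
which is a Carath\'eodory function because $\overline{u},u_\lambda\in{\rm int}\,C_+$ remove the singularity at $x=0$. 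Let $\hat{B}_\mu(z,x)=\int_0^x\hat{\beta}_\mu(z,s)\,ds$ and consider the $C^1$-functional
$$
\hat{\Psi}_\mu(u)=\gamma_{p(z)}(u)+\int_\Omega\frac{1}{q(z)}|Du|^{q(z)}dz-\int_\Omega\hat{B}_\mu(z,u)\,dz
$$
on $W^{1,p(z)}(\Omega)$. As in the proof of Proposition \ref{prop9}, the boundedness of the truncation forces $\hat{\Psi}_\mu$ to be coercive, and the compact embedding $W^{1,p(z)}(\Omega)\hookrightarrow L^{p(z)}(\Omega)$ gives sequential weak lower semicontinuity; hence $\hat{\Psi}_\mu$ has a global minimizer $u_\mu$, satisfying $\hat{\Psi}_\mu'(u_\mu)=0$, i.e.
$$
\langle A_{p(z)}(u_\mu),h\rangle+\langle A_{q(z)}(u_\mu),h\rangle+\int_\Omega\xi(z)|u_\mu|^{p(z)-2}u_\mu h\,dz=\int_\Omega\hat{\beta}_\mu(z,u_\mu)h\,dz
$$
for all $h\in W^{1,p(z)}(\Omega)$.

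Then I would run the two standard comparison arguments. Choosing $h=(\overline{u}-u_\mu)^+$ and using \eqref{eq5} together with $f\geq0$, one gets $\overline{u}\leq u_\mu$. Choosing $h=(u_\mu-u_\lambda)^+$ and using that $u_\lambda\in S_\lambda$ solves $(P_\lambda)$, together with the inequality $u_\lambda^{-\eta(z)}+\mu f(z,u_\lambda)\leq u_\lambda^{-\eta(z)}+\lambda f(z,u_\lambda)$ (valid since $\mu<\lambda$ and $f\geq0$), one gets $u_\mu\leq u_\lambda$. Hence $u_\mu\in[\overline{u},u_\lambda]$, so the truncation is inactive and $u_\mu$ is an actual positive solution of $(P_\mu)$; therefore $\mu\in\mathcal{L}$. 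The main obstacle is really just the bookkeeping in these two comparison computations — in particular making sure the singular terms $\overline{u}^{-\eta(z)}(\overline{u}-u_\mu)^+$ and $u_\lambda^{-\eta(z)}(u_\mu-u_\lambda)^+$ are integrable, which follows from $\overline{u},u_\lambda\in{\rm int}\,C_+$ and the definition of solution — rather than any conceptual difficulty; the coercivity and lower semicontinuity of $\hat{\Psi}_\mu$ are routine given the uniform bounds coming from Proposition \ref{prop4}.
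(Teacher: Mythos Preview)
Your proposal is correct and follows essentially the same approach as the paper: both use $\overline u$ and $u_\lambda$ as lower and upper barriers, introduce the same two-sided truncation of the reaction for $(P_\mu)$, minimize the resulting coercive, sequentially weakly lower semicontinuous $C^1$-functional, and then run the two test-function comparisons with $(\overline u-u_\mu)^+$ and $(u_\mu-u_\lambda)^+$ to confine the minimizer to $[\overline u,u_\lambda]$. The paper simply refers back to the proof of Proposition~\ref{prop9} for these comparison steps, whereas you spell them out.
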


\begin{proof}
  Since $\lambda\in\mathcal{L}$, we can find $u_\lambda\in S_\lambda\subseteq{\rm int}\,C_+$ (see Proposition \ref{prop11}). Then we have
\begin{eqnarray}\nonumber
% \nonumber % Remove numbering (before each equation)
  && -\Delta_{p(z)}u_\lambda-\Delta_{q(z)} u_\lambda +\xi(z)u_\lambda^{p(z)-1} \\ \nonumber
  &=& u_\lambda^{-\eta(z)}+\lambda f(z,u_\lambda) \\
  &\geq& u_\lambda^{-\eta(z)}+\mu f(z,u_\lambda) \mbox{ in }\Omega \label{eq28}\\ \nonumber
  && \mbox{ (recall that $0<\mu<\lambda$ and see hypothesis $H_1(i)$). }
\end{eqnarray}

Also we have
\begin{eqnarray}\nonumber
% \nonumber % Remove numbering (before each equation)
  && -\Delta_{p(z)}\overline{u}-\Delta_{q(z)}\overline{u}+\xi(z)\overline{u}^{p(z)-1} \\ \nonumber
  &=& \overline{u}^{-\eta(z)} \\
  &\leq& \overline{u}^{-\eta(z)}+\mu f(z,\overline{u}) \mbox{ in } \Omega \mbox{ (see hypothesis $H_1(i)$)}. \label{eq29}
\end{eqnarray}

From Proposition \ref{prop10} we know that $\overline{u}\leq u_\lambda$. So, we can define the following truncation of the reaction of problem $(P_\mu)$
\begin{equation}\label{eq30}
  \hat{\tau}_\mu(z,x)=\left\{
                        \begin{array}{ll}
                          \overline{u}^{-\eta(z)}+\mu f(z,\overline{u}(z)), & \hbox{ if }x<\overline{u}(z) \\
                           x^{-\eta(z)}+\mu f(z,x), & \hbox{ if }\overline{u}(z)\leq x\leq u_\lambda(z) \\
                           u_\lambda(z)^{-\eta(z)}+\mu f(z,u_\lambda(z)), & \hbox{ if } u_\lambda(z)<x.
                        \end{array}
                      \right.
\end{equation}

This is a Carath\' eodory function. We set $\hat{T}_\mu(z,x)=\displaystyle{\int_0^x \hat{\tau}(z,s)ds}$ and consider the $C^1$-functional $\hat{w}_\mu:W^{1,p(z)}(\Omega)\to \RR$ defined by
$$
\hat{w}_\mu(u)=\gamma_{p(z)}(u)+\int_\Omega \frac{1}{q(z)}|Du|^{q(z)}dz-\int_\Omega \hat{T}_\mu(z,u)dz \mbox{ for all }u\in W^{1,p(z)}(\Omega).
$$

As before (see the proof of Proposition \ref{prop9}), using the direct method of the calculus of variations and \eqref{eq28}, \eqref{eq29}, we can find $u_\mu\in W^{1,p(z)}(\Omega)$ such that
\begin{eqnarray*}
% \nonumber % Remove numbering (before each equation)
  && u_\mu\in K_{w_\lambda}\subseteq[\overline{u},u_\lambda]\cap{\rm int}\,C_+, \\
  &\Rightarrow& u_\mu\in S_\mu \mbox{ (see \eqref{eq30}), } \\
  &\Rightarrow& \mu\in\mathcal{L}.
\end{eqnarray*}
The proof is now complete.
\end{proof}

\begin{rem}
  As a byproduct of this proof we have that if $0<\mu<\lambda\in\mathcal{L}$ and $u_\lambda\in S_\lambda\subseteq{\rm int}\,C_+$ then we can find $u_\mu\in S_\mu\subseteq{\rm int}\,C_+$ such that
$$
u_\lambda-u_\mu\in C_+\setminus\{0\}.
$$

In fact we can improve this result using the strong comparison principle (see Proposition \ref{prop3}).
\end{rem}

\begin{prop}\label{prop14}
  If hypotheses $H_0$, $H_1$ hold, $0<\mu<\lambda\in\mathcal{L}$ and $u_\lambda\in S_\lambda\subseteq{\rm int}\,C_+$, then $\mu\in \mathcal{L}$ and there exists $u_\mu\in S_\mu\subseteq{\rm int}\,C_+$ such that $u_\lambda-u_\mu\in D_+$.
\end{prop}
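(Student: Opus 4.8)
The plan is to derive the conclusion from two ingredients already available: Proposition \ref{prop13}, which furnishes the membership $\mu\in\mathcal{L}$ together with a solution $u_\mu$ lying below $u_\lambda$, and the strong comparison principle Proposition \ref{prop3}, which will upgrade the weak inequality $0\leq u_\lambda-u_\mu$ to $u_\lambda-u_\mu\in D_+$. The whole task reduces to exhibiting $u_\mu$ and $u_\lambda$ as solutions of two equations of exactly the form occurring in Proposition \ref{prop3}, with the same ``potential'' $\hat\xi$ and with right-hand sides whose difference is bounded below by a strictly positive constant.

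Concretely, I would first extract from the proof of Proposition \ref{prop13} a solution $u_\mu\in S_\mu\subseteq{\rm int}\,C_+$ with $\overline u\leq u_\mu\leq u_\lambda$; in particular $m_\mu:=\min_{\overline\Omega}u_\mu>0$, and $\rho:=\|u_\lambda\|_\infty$ is finite and dominates $\|u_\mu\|_\infty$ as well. Let $\hat\xi_\rho>0$ be the constant provided by hypothesis $H_1(v)$ for this $\rho$, so that $x\mapsto f(z,x)+\hat\xi_\rho x^{p(z)-1}$ is nondecreasing on $[0,\rho]$ for a.a. $z\in\Omega$. Set
\[
\hat\xi(z):=\xi(z)+\lambda\hat\xi_\rho,\qquad h(z):=\mu f(z,u_\mu(z))+\lambda\hat\xi_\rho u_\mu(z)^{p(z)-1},\qquad g(z):=\lambda f(z,u_\lambda(z))+\lambda\hat\xi_\rho u_\lambda(z)^{p(z)-1}.
\]
Since $\xi\in L^\infty(\Omega)$ with $\xi>0$ a.e., we have $\hat\xi\in L^\infty(\Omega)$ with $\hat\xi\geq0$ a.e.; and since $u_\mu,u_\lambda\in{\rm int}\,C_+$ are bounded and $f$ satisfies the growth bound $H_1(i)$, both $h$ and $g$ belong to $L^\infty(\Omega)$. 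Adding $\lambda\hat\xi_\rho u_\mu^{p(z)-1}$ (resp. $\lambda\hat\xi_\rho u_\lambda^{p(z)-1}$) to both sides of the weak equation satisfied by $u_\mu\in S_\mu$ (resp. $u_\lambda\in S_\lambda$) rewrites them as
\[
-\Delta_{p(z)}u_\mu-\Delta_{q(z)}u_\mu+\hat\xi(z)u_\mu^{p(z)-1}-u_\mu^{-\eta(z)}=h(z),\qquad -\Delta_{p(z)}u_\lambda-\Delta_{q(z)}u_\lambda+\hat\xi(z)u_\lambda^{p(z)-1}-u_\lambda^{-\eta(z)}=g(z)\quad\text{in }\Omega,
\]
which are precisely the two equations in Proposition \ref{prop3}, with $0\leq u_\mu\leq u_\lambda$ and $u_\mu,u_\lambda\in C^1(\overline\Omega)$ (recall $S_\mu,S_\lambda\subseteq{\rm int}\,C_+$).

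It then remains to check the strict ordering of the data. Writing
\[
g(z)-h(z)=\lambda\Big[\big(f(z,u_\lambda)+\hat\xi_\rho u_\lambda^{p(z)-1}\big)-\big(f(z,u_\mu)+\hat\xi_\rho u_\mu^{p(z)-1}\big)\Big]+(\lambda-\mu)f(z,u_\mu),
\]
the bracketed quantity is $\geq0$ for a.a. $z\in\Omega$ by the monotonicity in $H_1(v)$ applied at the ordered pair $0\leq u_\mu(z)\leq u_\lambda(z)\leq\rho$, while $f(z,u_\mu(z))\geq\hat\mu_{m_\mu}>0$ by hypothesis $H_1(iv)$ (with $s=m_\mu>0$). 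Hence $g(z)-h(z)\geq(\lambda-\mu)\hat\mu_{m_\mu}=:\hat\mu_0>0$ for a.a. $z\in\Omega$. All hypotheses of Proposition \ref{prop3} being verified, we conclude $u_\lambda-u_\mu\in D_+$, which together with $\mu\in\mathcal{L}$ (from Proposition \ref{prop13}) completes the proof.

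I expect the main --- indeed the only substantive --- difficulty to be arranging that positive lower bound for $g-h$. The device that makes it work is to absorb the factor $\lambda$ into the perturbation term $\lambda\hat\xi_\rho x^{p(z)-1}$, so that the quasi-monotonicity hypothesis $H_1(v)$ exactly compensates the difference of the reaction terms evaluated at $u_\mu$ and at $u_\lambda$, leaving behind the clean surplus $(\lambda-\mu)f(z,u_\mu)$; this surplus stays away from zero precisely because $u_\mu\in{\rm int}\,C_+$ is uniformly positive, so that $H_1(iv)$ applies. The remaining points --- the $L^\infty$ memberships, the $C^1(\overline\Omega)$ regularity, and the rewriting of the weak formulations --- are routine.
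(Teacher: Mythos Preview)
Your proposal is correct and follows essentially the same approach as the paper's proof: both extract $u_\mu\in[\overline u,u_\lambda]$ from Proposition~\ref{prop13}, add a term $\hat\xi_\rho u^{p(z)-1}$ (you use $\lambda\hat\xi_\rho$, which is the cleaner choice) to recast the equations for $u_\mu$ and $u_\lambda$ in the form required by Proposition~\ref{prop3}, invoke $H_1(v)$ for the monotone part, and use $H_1(iv)$ on the surplus $(\lambda-\mu)f(z,u_\mu)$ to secure the uniform positive gap $g-h\geq\hat\mu_0>0$. Your explicit factor $\lambda$ in front of $\hat\xi_\rho$ is a small but genuine improvement in precision over the paper's write-up, since $H_1(v)$ literally gives monotonicity of $f(z,\cdot)+\hat\xi_\rho(\cdot)^{p(z)-1}$ rather than of $\lambda f(z,\cdot)+\hat\xi_\rho(\cdot)^{p(z)-1}$.
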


\begin{proof}
  From Proposition \ref{prop13} and its proof, we already know that $\mu\in\mathcal{L}$ and we can find $u_\mu\in S_\mu\subseteq{\rm int}\,C_+$ such that
\begin{equation}\label{eq31}
  u_\mu\leq u_\lambda,\;u_\mu\not=u_\lambda.
\end{equation}

Let $\rho=\|u_\lambda\|_\infty$ and let $\hat{\xi}_\rho>0$ be as postulated by hypothesis  $H_1(v)$. We have
\begin{eqnarray}\nonumber
% \nonumber % Remove numbering (before each equation)
  && -\Delta_{p(z)}u_\mu-\Delta_{q(z)}u_\mu+[\xi(z)+\hat{\xi}_\rho]u_\mu^{p(z)-1}-u_\mu^{-\eta(z)} \\ \nonumber
  &=& \mu f(z,u_\mu)+\hat{\xi}_\rho u_\mu^{p(z)-1} \\ \nonumber
  &=& \lambda f(z,u_\mu)+\hat{\xi}_\rho u_\mu^{p(z)-1}-(\lambda-\mu)f(z,u_\mu) \\ \nonumber
  &\leq& \lambda f(z,u_\lambda)+\hat{\xi}_\rho u_\lambda^{p(z)-1} \mbox{ (see \eqref{eq31} and hypothesis $H_1(v)$)} \\
   &=& -\Delta_{p(z)}u_\lambda-\Delta_{q(z)}u_\lambda+[\xi(z)+\hat{\xi}_\rho]u_\lambda^{p(z)-1} \mbox{ in }\Omega. \label{eq32}
\end{eqnarray}

On account of hypothesis $H_1(iv)$ and since $u_\mu\in{\rm int}\,C_+$, we have
$$
0<\hat{\mu}_\mu\leq(\lambda-\mu)f(z,u_\mu(z)) \mbox{ for a.a. }z\in\Omega.
$$

Hence from \eqref{eq32} and Proposition \ref{prop3}, it follows that
$$
u_\lambda-u_\mu\in D_+.
$$
The proof is now complete.
\end{proof}

\begin{prop}\label{prop15}
  If hypotheses $H_0$, $H_1$ hold and $\lambda\in(0,\lambda^*)$, then problem $(P_\lambda)$ admits at least two positive solutions
$$
u_0,\hat{u}\in{\rm int}\,C_+,\ u_0\leq \hat{u},\ u_0\not=\hat{u}.
$$
\end{prop}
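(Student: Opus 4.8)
The plan is to work below the critical value $\lambda^*$ by first locating, for the given $\lambda$, a solution $u_0\in S_\lambda$ lying \emph{strictly} below a solution at a larger parameter, then recovering $u_0$ as a local minimizer of a suitably truncated energy functional and producing a second solution via the mountain pass theorem. First I would fix $\vartheta\in(\lambda,\lambda^*)$; by Proposition~\ref{prop13}, $\vartheta\in\mathcal{L}$, so there is $u_\vartheta\in S_\vartheta\subseteq{\rm int}\,C_+$ (Proposition~\ref{prop11}), and by Proposition~\ref{prop14} applied to the ordered pair of parameters $\lambda<\vartheta$ there is $u_0:=u_\lambda\in S_\lambda\subseteq{\rm int}\,C_+$ with $u_\vartheta-u_0\in D_+$; recall also $\overline{u}\le u_0$ by Proposition~\ref{prop10}. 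Then I would freeze the reaction below $u_0$, setting
$$
\hat g_\lambda(z,x)=\left\{\begin{array}{ll} u_0(z)^{-\eta(z)}+\lambda f(z,u_0(z)), & x<u_0(z),\\ x^{-\eta(z)}+\lambda f(z,x), & u_0(z)\le x,\end{array}\right.
$$
a Carath\'eodory function with $0\le\hat g_\lambda(z,x)\le c\,(1+x^{r(z)-1})$ for $x\ge0$ (using $H_1(i)$ and $u_0\ge\overline{u}$ to control the singular part), so its primitive $\hat G_\lambda(z,x)=\int_0^x\hat g_\lambda(z,s)\,ds$ makes
$$
\hat\varphi_\lambda(u)=\gamma_{p(z)}(u)+\int_\Omega\frac1{q(z)}|Du|^{q(z)}\,dz-\int_\Omega\hat G_\lambda(z,u)\,dz
$$
a $C^1$ functional on $W^{1,p(z)}(\Omega)$ (the integral term is sequentially weakly continuous by the compact embedding into $L^{r(z)}(\Omega)$), and since $u_0\in S_\lambda$ one checks $u_0\in K_{\hat\varphi_\lambda}$.

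Next I would show $K_{\hat\varphi_\lambda}\subseteq[u_0)\cap{\rm int}\,C_+\subseteq S_\lambda$: testing $\hat\varphi_\lambda'(v)=0$ with $(u_0-v)^+$, using that $u_0$ solves $(P_\lambda)$, the monotonicity of $A_{p(z)},A_{q(z)}$ and $\xi>0$ a.e., forces $v\ge u_0$, whence (anisotropic regularity of Fan \cite{8Fan} and the maximum principle of Zhang \cite{23Zha}) $v\in{\rm int}\,C_+$ and $v\in S_\lambda$. So it suffices to produce $\hat u\in K_{\hat\varphi_\lambda}$ with $\hat u\ne u_0$. I would also truncate from above at $u_\vartheta$: let $\hat e_\lambda(z,x)$ equal $\hat g_\lambda(z,x)$ for $x\le u_\vartheta(z)$ and $u_\vartheta(z)^{-\eta(z)}+\lambda f(z,u_\vartheta(z))$ for $x>u_\vartheta(z)$, with primitive $\hat E_\lambda$, and let $\hat\psi_\lambda$ be $\hat\varphi_\lambda$ with $\hat E_\lambda$ in place of $\hat G_\lambda$. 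Then $\hat\psi_\lambda$ is coercive (its nonlinear term grows at most linearly at infinity while $\gamma_{p(z)}$ is $(p_-)$-coercive, cf. Proposition~\ref{prop4}) and sequentially weakly lower semicontinuous, so it has a global minimizer $\tilde u_0$; testing $\hat\psi_\lambda'(\tilde u_0)=0$ with $(u_0-\tilde u_0)^+$ and $(\tilde u_0-u_\vartheta)^+$, and noting that $u_0\in S_\lambda$ solves $(P_\lambda)$ while $u_\vartheta\in S_\vartheta$ is a supersolution of $(P_\lambda)$ (because $\vartheta>\lambda$ and $f\ge0$), gives $\tilde u_0\in[u_0,u_\vartheta]$, hence $\tilde u_0\in S_\lambda$. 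If $\tilde u_0\ne u_0$ we are done, taking $\hat u:=\tilde u_0$ (then $u_0\le\hat u$, $u_0\ne\hat u$, both in ${\rm int}\,C_+$); so assume $\tilde u_0=u_0$.

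The key point is then that $u_0$ is a local minimizer of $\hat\varphi_\lambda$. Since $\hat g_\lambda$ and $\hat e_\lambda$ coincide on $\{(z,x):x\le u_\vartheta(z)\}$, their primitives agree there, so $\hat\varphi_\lambda=\hat\psi_\lambda$ on $\{u\in W^{1,p(z)}(\Omega):u\le u_\vartheta\}$; moreover $u_\vartheta-u_0\in D_+$ together with the vanishing normal derivatives on $\partial\Omega$ forces $u_\vartheta-u_0\in{\rm int}\,C_+$, so there is $\rho_0>0$ with $v\le u_\vartheta$ whenever $\|v-u_0\|_{C^1(\overline{\Omega})}<\rho_0$, and for such $v$
$$
\hat\varphi_\lambda(v)=\hat\psi_\lambda(v)\ge\hat\psi_\lambda(\tilde u_0)=\hat\psi_\lambda(u_0)=\hat\varphi_\lambda(u_0).
$$
Thus $u_0$ is a local $C^1(\overline{\Omega})$-minimizer of $\hat\varphi_\lambda$, and by the equivalence of $C^1(\overline{\Omega})$- and $W^{1,p(z)}(\Omega)$-local minimizers for functionals of this anisotropic type (the analogue of the Brezis--Nirenberg result), $u_0$ is a local $W^{1,p(z)}(\Omega)$-minimizer of $\hat\varphi_\lambda$.

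Finally I would invoke the mountain pass theorem for $\hat\varphi_\lambda$. We may assume $K_{\hat\varphi_\lambda}$ is finite (otherwise $[u_0)\cap{\rm int}\,C_+$ already contains infinitely many positive solutions of $(P_\lambda)$), so $u_0$ is an isolated local minimizer and there is $\rho\in(0,1)$ with $\hat\varphi_\lambda(u_0)<\inf\{\hat\varphi_\lambda(u):\|u-u_0\|=\rho\}=:m_\lambda$. By $H_1(ii)$ the reaction is $(p_+-1)$-superlinear, so $\hat\varphi_\lambda(te)\to-\infty$ as $t\to+\infty$ for a fixed $e\in{\rm int}\,C_+$, and $\hat\varphi_\lambda$ satisfies the $C$-condition — the standard argument for superlinear problems, where boundedness of Cerami sequences is extracted from the quasimonotonicity hypothesis $H_1(iii)$ in place of the Ambrosetti--Rabinowitz condition, and strong convergence then follows from the $(S)_+$-property of $A_{p(z)}+A_{q(z)}$ (Proposition~\ref{prop2}). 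The mountain pass theorem yields $\hat u\in K_{\hat\varphi_\lambda}$ with $\hat\varphi_\lambda(\hat u)\ge m_\lambda>\hat\varphi_\lambda(u_0)$, hence $\hat u\ne u_0$, and by the second paragraph $\hat u\in S_\lambda\subseteq{\rm int}\,C_+$ with $u_0\le\hat u$; this produces the two required positive solutions. The main obstacle I anticipate is precisely this middle part: upgrading the $C^1$-local minimality of $u_0$ to $W^{1,p(z)}(\Omega)$-local minimality in the anisotropic singular setting, and verifying the $C$-condition for $\hat\varphi_\lambda$ without the Ambrosetti--Rabinowitz condition; the truncation bookkeeping and the comparison arguments, while lengthy, are routine.
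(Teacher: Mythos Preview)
Your proposal is correct and follows essentially the same route as the paper: the same two-sided truncation at $u_0$ and $u_\vartheta$, the same identification of $u_0$ as a local $C^1(\overline\Omega)$- (hence $W^{1,p(z)}(\Omega)$-) minimizer via the global minimizer of the doubly truncated functional, and the same mountain pass argument for the singly truncated functional. The only point worth flagging is that the $C$-condition, which you describe as ``the standard argument'', is in fact the longest part of the paper's proof and is not entirely routine in the anisotropic setting: the case where the normalized sequence $y_n=u_n^+/\|u_n^+\|$ has weak limit $y\equiv 0$ is handled via an auxiliary functional $\tilde v_\lambda$ (with $1/p_+$ replacing $1/p(z)$) and a maximization-in-$t$ trick exploiting $H_1(iii)$, so your anticipated obstacle is real, though the paper confirms it can be overcome.
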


\begin{proof}
  Let $0<\lambda<\vartheta<\lambda^*$. We know that $\vartheta\in\mathcal{L}$ (see Proposition \ref{prop13}). Moreover, according to Proposition \ref{prop14}, we can find $u_\vartheta\in S_\vartheta\subseteq{\rm int}\,C_+$ and $u_0\in S_\lambda\subseteq{\rm int}\,C_+$ such that
\begin{equation}\label{eq33}
  u_\vartheta-u_0\in{\rm int}\,C_+.
\end{equation}

We introduce the Carath\' eodory function $d_\lambda(z,x)$ defined by
\begin{equation}\label{eq34}
  d_\lambda(z,x)=\left\{
                   \begin{array}{ll}
                     u_0(z)^{-\eta(z)}+\lambda f(z,u_0(z)), & \hbox{ if }x\leq u_0(z) \\
                     x^{-\eta(z)}+\lambda f(z,x), & \hbox{ if } u_0(z)<x.
                   \end{array}
                 \right.
\end{equation}

Also we consider the following truncation of $d_\lambda(z,\cdot)$:
\begin{equation}\label{eq35}
  \hat{d}_\lambda(z,x)=\left\{
                         \begin{array}{ll}
                           d_\lambda(z,x), & \hbox{ if }x\leq u_\vartheta(z) \\
                           d_\lambda(z,u_\vartheta(z)), & \hbox{ if }u_\vartheta(z)<x.
                         \end{array}
                       \right.
\end{equation}

This  is also a Carath\'eodory function. We set
$$
D_\lambda(z,x)=\int_0^x d_\lambda(z,s)ds \mbox{ and } \hat{D}_\lambda(z,x)=\int_0^x \hat{d}_\lambda(z,s)ds.
$$

Then we consider the $C^1$-functionals $v_\lambda,\hat{v}_\lambda:W^{1,p(z)}(\Omega)\to\RR$ defined by
\begin{eqnarray*}
% \nonumber % Remove numbering (before each equation)
  && v_\lambda(u)=\gamma_{p(z)}(u)+\int_\Omega \frac{1}{q(z)}|Du|^{q(z)}-\int_\Omega D_\lambda(z,u)dz, \\
  && \hat{v}_\lambda(u)=\gamma_{p(z)}(u)+\int_\Omega \frac{1}{q(z)} |Du|^{q(z)}dz-\int_\Omega \hat{D}_\lambda(z,u) dz \mbox{ for all }u\in W^{1,p(z)}(\Omega).
\end{eqnarray*}

From \eqref{eq34}, \eqref{eq35} and the anisotropic regularity theory, we have
\begin{equation}\label{eq36}
  K_{v_\lambda}\subseteq[u_0)\cap{\rm int}\,C_+,
\end{equation}
\begin{equation}\label{eq37}
  K_{\hat{v}_\lambda}\subseteq[u_0,u_\vartheta]\cap{\rm int}\,C_+.
\end{equation}

On account of \eqref{eq34} and \eqref{eq36}, we see that we may assume that
\begin{equation}\label{eq38}
  K_{v_\lambda}\cap[u_0,u_\vartheta]=\{u_0\}.
\end{equation}

Otherwise we already have a second positive smooth solution bigger that $u_0$ (see \eqref{eq34}, \eqref{eq36}). Also from \eqref{eq34} and \eqref{eq35} we see that
\begin{equation}\label{eq39}
  v_\lambda\Big|_{[0,u_\vartheta]}=\hat{v}_\lambda\Big|_{[0,u_\vartheta]},\; v'_\lambda\Big|_{[0,u_\vartheta]}=\hat{v}'_\lambda\Big|_{[0,u_\vartheta]}.
\end{equation}

The functional $\hat{v}_\lambda(\cdot)$ is coercive and sequentially weakly lower semicontinuous. So, we can find $\tilde{u}_0\in W^{1,p(z)}(\Omega)$ such that
\begin{eqnarray}\nonumber
  && \hat{v}_\lambda(\tilde{u}_0)=\min\left\{\hat{v}_\lambda(u):\:u\in W^{1,p(z)}(\Omega)\right\}, \\ \nonumber
  &\Rightarrow& \tilde{u}_0\in K_{\hat{v}_\lambda}\subseteq[u_0,u_\vartheta]\cap{\rm int}\,C_+ \mbox{ (see \eqref{eq37}), } \\ \nonumber
  &\Rightarrow& \tilde{u}_0=u_0 \mbox{ (see \eqref{eq38}, \eqref{eq39}), } \\ \nonumber
  &\Rightarrow& u_0 \mbox{ is a local $C^1(\overline{\Omega})$-minimizer of $v_\lambda(\cdot)$ (see \eqref{eq33},\eqref{eq39}), } \\
  &\Rightarrow& u_0 \mbox{ is a local $W^{1,p(z)}(\Omega)$-minimizer of $v_\lambda(\cdot)$ } \label{eq40} \\ \nonumber
  && \mbox{ (see Gasinski \& Papageorgiou \cite[Proposition 3.3]{9Gas-Pap}). }
\end{eqnarray}

From \eqref{eq36} and \eqref{eq34} it is clear that we may assume that
\begin{equation}\label{eq41}
  K_{v_\lambda} \mbox{ is finite. }
\end{equation}
Indeed,
otherwise we already have a whole sequence of positive smooth solutions all bigger than $u_0$ and so we are done.

From \eqref{eq40}, \eqref{eq41} and Theorem 5.7.6 of Papageorgiou, R\u adulescu \& Repov\v s \cite[p. 449]{15Pap-Rad-Rep}, we know that we can find $\rho\in(0,1)$ small such that
\begin{equation}\label{eq42}
  v_\lambda(u_0)<\inf\left\{v_\lambda(u):\:\|u-u_0\|=\rho\right\}=m_\lambda.
\end{equation}

Also hypothesis $H_1(ii)$ and \eqref{eq34} imply that if $u\in {\rm int}\,C_+$, then
\begin{equation}\label{eq43}
  v_\lambda(tu)\to-\infty \mbox{ as }t\to+\infty.
\end{equation}

\smallskip {\it
Claim}: $v_\lambda(\cdot)$ satisfies the $C$-condition.

We consider a sequence $\{u_n\}_{n\geq1}\subseteq W^{1,p(z)}(\Omega)$ such that
\begin{equation}\label{eq44}
  |v_\lambda(u_n)|\leq C_7 \mbox{ for some } C_7>0, \mbox{ all }n\in\NN,
\end{equation}
\begin{equation}\label{eq45}
  (1+\|u_n\|)v'_\lambda(u_n)\to0 \mbox{ in }W^{1,p(z)}(\Omega)^* \mbox{ as }n\to\infty.
\end{equation}

From \eqref{eq45} we have
\begin{equation}\label{eq46}
  \left|\langle A_{p(z)}(u_n),h\rangle+\langle A_{q(z)}(u_n),h\rangle+\int_\Omega \xi(z)|u_n|^{p(z)-2}u_n hdz-\int_\Omega d_\lambda(z,u_n)hdz\right|\leq \frac{\varepsilon_n\|h\|}{1+\|u_n\|}
\end{equation}
for all $h\in W^{1,p(z)}(\Omega)$, with $\varepsilon_n\to0^+$.

In \eqref{eq46}, we choose $h=-u_n^-\in W^{1,p(z)}(\Omega)$ and obtain
\begin{eqnarray}\nonumber
% \nonumber % Remove numbering (before each equation)
  && \rho_{p(z)}(Du_n^-)+\rho_{q(z)}(Du_n^-)+\int_\Omega \xi(z)(u_n^-)^{p(z)}dz\leq C_8 \\ \nonumber
  && \mbox{ for some } C_8>0, \mbox{ all }n\in\NN \mbox{ (see \eqref{eq34}), } \\
  &\Rightarrow& \{u_n^-\}_{n\geq1}\subseteq W^{1,p(z)}(\Omega) \mbox{ is bounded.} \label{eq47}
\end{eqnarray}

Next, we choose in \eqref{eq46} $h=u_n^+\in W^{1,p(z)}(\Omega)$ and we obtain
\begin{equation}\label{eq48}
  -\rho_{p(z)}(Du_n^+)-\rho(Du_n^+)-\int_\Omega \xi(z)(u_n^+)^{p(z)}dz +\int_\Omega d_\lambda(z,u_n^+)u_n^+dz\leq \varepsilon_n
\end{equation}
for all $n\in\NN$.

From \eqref{eq44} and \eqref{eq47}, we have
\begin{eqnarray}
% \nonumber % Remove numbering (before each equation)
  && \rho_{p(z)}(Du_n^+)+\rho_{q(z)}(Du_n^+)+\int_\Omega \xi(z)(u_n^+)^{p(z)}dz-\int_\Omega p_+ D_\lambda(z,u_n^+)dz\leq C_9 \label{eq49} \\ \nonumber
  && \mbox{ for some $C_9>0$, all $n\in\NN$ (recall $q_-<p_-$). }
\end{eqnarray}

We add \eqref{eq48} and \eqref{eq49}. Then
\begin{eqnarray}
% \nonumber % Remove numbering (before each equation)
  \int_\Omega I_\lambda(z,u_n^+)dz&\leq& C_{10}  \label{eq50}\\ \nonumber
  && \mbox{ for some $C_{10}>0$, all $n\in\NN$ (see \eqref{eq34}). }
\end{eqnarray}

Suppose that $\{u_n^+\}_{n\geq1}\subseteq W^{1,p(z)}(\Omega)$ is not bounded. So, we may assume that
\begin{equation}\label{eq51}
  \|u_n^+\|\to+\infty \mbox{ as } n\to \infty.
\end{equation}

We set $y_n=\frac{u_n^+}{\|u_n^+\|}$ for all $n\in\NN$. Then $\|y_n\|=1$, $y_n\geq0$ for all $n\in\NN$. So, we may assume that
\begin{equation}\label{eq52}
  y_n\overset{w}{\to}y \mbox{ in } W^{1,p(z)}(\Omega) \mbox{ and }y_n\to y \mbox{ in } L^{p(z)}(\Omega), \ y\geq0.
\end{equation}

Initially we assume that $y\not\equiv0$. Let $\hat{\Omega}=\{z\in\Omega:\:y(z)>0\}$. From \eqref{eq52} we see that $|\hat{\Omega}|_N>0$ (by $|\cdot|_N$ we denote the Lebesgue measure on $\RR^N$) and
$$
u_n^+(z)\to+\infty \mbox{ for a.a. }z\in\hat{\Omega}.
$$

Hypothesis $H_1(ii)$ implies
\begin{eqnarray}\nonumber
% \nonumber % Remove numbering (before each equation)
   && \frac{F(z,u_n^+(z))}{\|u_n^+\|^{p_+}}=\frac{F(z,u_n^+(z))}{u_n^+(z)^{p_+}}y_n(z)^{p_+}\to+\infty \mbox{ for a.a. }z\in\hat{\Omega}, \\ \nonumber
   &\Rightarrow& \int_{\hat{\Omega}} \frac{F(z,u_n^+)}{\|u_n^+\|^{p_+}}dz\to+\infty \mbox{ (by Fatou's lemma), }\\
   &\Rightarrow& \int_\Omega \frac{F(z,u_n^+)}{\|u_n^+\|^{p_+}}dz\to+\infty \mbox{ (since $F\geq0$, see hypothesis $H_1(i)$). } \label{eq53}
\end{eqnarray}

From \eqref{eq44} and \eqref{eq47}, we have
\begin{eqnarray}\nonumber
% \nonumber % Remove numbering (before each equation)
  && -\gamma_{p(z)}(u_n^+)-\int_\Omega \frac{1}{q(z)}|Du_n^+|^{q(z)}dz-\int_\Omega \frac{\xi(z)}{p(z)}(u_n^+)^{p(z)}dz+\int_\Omega D_\lambda(z,u_n^+)dz\leq C_{11} \\ \nonumber
  && \mbox{ for some $C_{11}>0$, all $n\in\NN$, } \\
  &\Rightarrow& \int_\Omega \frac{\lambda F(z,u_n^+)}{\|u_n^+\|^{p_+}}dz\leq C_{12} \label{eq54} \\ \nonumber
  && \mbox{ for some $C_{12}>0$, all $n\in\NN$ (see \eqref{eq51} and \eqref{eq34}). }
\end{eqnarray}

Comparing \eqref{eq53} and \eqref{eq54}, we have a contradiction.

Now we assume that $y\equiv0$. We consider the $C^1$-functional $\tilde{v}_\lambda:W^{1,p(z)}(\Omega)\to \RR$ defined by
$$
\tilde{v}_\lambda(u)=\frac{1}{p_+}\left[\int_\Omega |Du|^{p(z)}dz+\int_\Omega \xi(z)|u|^{p(z)}dz\right]-\int_\Omega D_\lambda(z,u)dz
$$
for all $u\in W^{1,p(z)}(\Omega)$.

Evidently we have
\begin{equation}\label{eq55}
  \tilde{v}_\lambda\leq v_\lambda.
\end{equation}

Let $k_n(t)=\tilde{v}_\lambda(tu_n^+)$ for all $t\in[0,1]$, all $n\in\NN$. We can find $t_n\in[0,1]$ such that
\begin{equation}\label{eq56}
  k_n(t_n)=\max_{0\leq t\leq1} k_n(t).
\end{equation}

For $\hat{\eta}>0$ let $w_n=(2\hat{\eta})^{\frac{1}{p(z)}}y_n$ $n\in\NN$. Then
\begin{eqnarray}\nonumber
% \nonumber % Remove numbering (before each equation)
  && w\to0 \mbox{ in } L^{p(z)}(\Omega) \mbox{ (see \eqref{eq52} and recall that $y\equiv0$), } \\
  &\Rightarrow& \int_\Omega D_\lambda(z,w_n)dz\to0 \mbox{ as }n\to\infty. \label{eq57}
\end{eqnarray}

On account of \eqref{eq51}, we see that we can find $n_0\in\NN$ such that
\begin{equation}\label{eq58}
  (2\hat{\eta})^{\frac{1}{p(z)}}\frac{1}{\|u_n^+\|}\leq1 \mbox{ for all }n\geq n_0.
\end{equation}

From \eqref{eq56} and \eqref{eq58}, we have
$$
k_n(t_n)\geq k_n\left(\frac{(2\hat{\eta})^{1/p(z)}}{\|u_n^+\|}\right),
$$
\begin{eqnarray*}
  \Rightarrow \tilde{v}_\lambda(t_n u_n^+) &\geq& \tilde{v}_\lambda\left((2\hat{\eta})^{1/p(z)}y_n\right)=\tilde{v}_\lambda(w_n), \\
  \Rightarrow \tilde{v}_\lambda(t_n u_n^+) &\geq& \frac{2\hat{\eta}}{p_+}\left[\rho_{p(z)}(Dy_n)+\rho_{p(z),\xi}(y_n)\right]-\int_\Omega D_\lambda(z,w_n)dz \\
  &\geq& \frac{2\hat{\eta}}{p_+} C_{13}\|y_n\|-\int_\Omega D_\lambda(z,w_n)dz \ \ \mbox{for some $C_{13}>0$}\\
  &\geq& \frac{\hat{\eta}}{p_+}C_{13} \mbox{ for all }n\geq n_1\geq n_0 \\
  && \mbox{ (see \eqref{eq57} and recall that $\|y_n\|=1$). }
\end{eqnarray*}

But $\hat{\eta}>0$ is arbitrary. So, it follows that
\begin{equation}\label{eq59}
  \tilde{v}_\lambda(t_n u_n^+)\to+\infty \mbox{ as }n\to\infty.
\end{equation}

We have
$$
0\leq t_n u_n^+\leq u_n^+ \mbox{ for all }n\in\NN.
$$

Hence hypothesis $H_1(iii)$ implies that
\begin{eqnarray}
% \nonumber % Remove numbering (before each equation)
   && \int_\Omega I_\lambda (z,t_n u_n^+)dz\leq \int_\Omega I_\lambda(z,u_n^+)dz +\|\hat{\vartheta}_\lambda\|_1\leq C_{14} \label{eq60} \\ \nonumber
  && \mbox{ for some $C_{14}>0$, all $n\in\NN$ (see \eqref{eq50}). }
\end{eqnarray}

We know that
$$
\tilde{v}_\lambda(0)=0 \mbox{ and } \tilde{v}_\lambda(u_n)\leq C_7 \mbox{ for all }n\in\NN \mbox{ (see \eqref{eq44} and \eqref{eq55}). }
$$

Then from \eqref{eq59} it follows that
$$
t_n\in(0,1) \mbox{ for all }n\geq n_2.
$$

So, from \eqref{eq56} we have
\begin{eqnarray}\nonumber
% \nonumber % Remove numbering (before each equation)
   0&=& t_n\frac{d}{dt}\tilde{v}_\lambda(tu_n^+)\Big|_{t=t_n} \\ \nonumber
    &=& \langle \tilde{v}'_\lambda(t_n u_n^+),t_n u_n^+\rangle \mbox{ for all }n\geq n_2 \mbox{ (by the chain rule), } \\ \nonumber
  \Rightarrow && \rho_{p(z)}(D(t_n u_n^+))+\rho_{p(z),\xi}(t_n u_n^+)=\int_\Omega d_\lambda (z,t_n u_n^+)(t_n u_n^+)dz \mbox{ for all }n\geq n_2, \\ \nonumber
  \Rightarrow && \rho_{p(z)}(D(t_n u_n^+))+\rho_{p(z),\xi}(t_n u_n^+)-\int_\Omega p_+D_\lambda(z,t_n u_n^+)dz\leq C_{15} \\ \nonumber
  && \mbox{ for some $C_{15}>0$, all $n\geq n_2$ (see \eqref{eq60} and \eqref{eq34}), } \\
  \Rightarrow&& p_+ \tilde{v}_\lambda(t_n u_n^+)\leq C_{15} \mbox{ for all } n\geq n_2. \label{eq61}
\end{eqnarray}

Comparing \eqref{eq59} and \eqref{eq61} we have a contradiction.

Therefore we infer that
\begin{eqnarray*}
% \nonumber % Remove numbering (before each equation)
  && \{u_n^+\}_{n\geq1} \subseteq W^{1,p(z)}(\Omega) \mbox{ is bounded, } \\
  &\Rightarrow& \{u_n\}_{n\geq1}\subseteq W^{1,p(z)}(\Omega) \mbox{ is bounded (see \eqref{eq47})}.
\end{eqnarray*}

So, we may assume that
\begin{equation}\label{eq62}
  u_n\overset{w}{\to}u \mbox{ in } W^{1,p(z)}(\Omega)\mbox{ and } u_n\to u \mbox{ in } L^{r(z)}(\Omega) \mbox{ as } n\to\infty.
\end{equation}

In \eqref{eq46} we choose $h=u_n-u\in W^{1,p(z)}(\Omega)$, pass to the limit as $n\to\infty$ and use \eqref{eq60}. Then reasoning as in the proof of Proposition \ref{prop6}, using Proposition \ref{prop2}, we obtain that
\begin{eqnarray*}
% \nonumber % Remove numbering (before each equation)
  && u_n\to u\mbox{ in }W^{1,p(z)}(\Omega), \\
  &\Rightarrow& v_\lambda(\cdot) \mbox{ satisfies the $C$-condition. }
\end{eqnarray*}

This proves the claim.

On account of \eqref{eq42}, \eqref{eq43} and the claim, we can apply the mountain pass theorem and find $\hat{u}\in W^{1,p(z)}(\Omega)$ such that
\begin{equation}\label{eq63}
  \hat{u} \in K_{v_\lambda}\subseteq[u_0)\cap{\rm int}\,C_+ \mbox{ and }m_\lambda\leq v_\lambda(\hat{u}).
\end{equation}

From \eqref{eq63}, \eqref{eq42} and \eqref{eq34}, we obtain
$$
  \hat{u}\in S_\lambda\subseteq{\rm int}\,C_+, \ u_0\leq \hat{u},\ u_0\not=\hat{u},
$$
which concludes the proof.
\end{proof}

We introduce the Carath\'eodory function $\hat{\beta}_\lambda(z,x)$ defined by
\begin{equation}\label{eq64}
  \hat{\beta}_\lambda(z,x)=\left\{
                             \begin{array}{ll}
                               \overline{u}(z)^{-\eta(z)}+\lambda f(z,\overline{u}(z)), & \hbox{ if }x\leq \overline{u}(z) \\
                               x^{-\eta(z)}+\lambda f(z,x), & \hbox{ if }\overline{u}(z)<x.
                             \end{array}
                           \right.
\end{equation}

We set $\hat{B}_\lambda(z,x)=\displaystyle{\int_0^x}\hat{\beta}(z,s)ds$ and consider the $C^1$-functional $\hat{\varphi}_\lambda:W^{1,p(z)}(\Omega)\to\RR$ defined by
$$
\hat{\varphi}_\lambda(u)=\gamma_{p(z)}(u)+\int_\Omega \frac{1}{q(z)}|Du|^{q(z)}dz-\int_\Omega \hat{B}_\lambda(z,u)dz
$$
for all $u\in W^{1,p(z)}(\Omega)$.

Using this functional, we can establish the admissibility of the critical parameter value $\lambda^*>0$.

\begin{prop}\label{prop16}
  If hypotheses $H_0$, $H_1$ hold, then $\lambda^*\in \mathcal{L}$.
\end{prop}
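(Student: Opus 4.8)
\emph{Overview and Step 1 (the approximating family).} The plan is the usual ``the supremum is attained'' scheme: approximate $\lambda^*$ from below by admissible parameters, pick suitable solutions, prove they stay bounded in $W^{1,p(z)}(\Omega)$, and pass to the limit. Fix a strictly increasing sequence $\lambda_n\uparrow\lambda^*$ with $\lambda_n<\lambda^*$; then each $\lambda_n\in\mathcal L$ by Proposition \ref{prop13}. Pick $\mu_n\in(\lambda_n,\lambda^*)\subseteq\mathcal L$ and some $v_n\in S_{\mu_n}\subseteq{\rm int}\,C_+$, and apply the construction in the proof of Proposition \ref{prop13} with the larger parameter $\mu_n$, the smaller parameter $\lambda_n$ and the solution $v_n$. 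This produces $u_n\in S_{\lambda_n}\subseteq{\rm int}\,C_+$ with $\overline u\le u_n\le v_n$, where $u_n$ is a global minimizer over $W^{1,p(z)}(\Omega)$ of the coercive truncated functional $\hat w_{\lambda_n}$ of that proof. Since $u_n\in[\overline u,v_n]$ the truncations in $\hat w_{\lambda_n}$ and in $\hat\varphi_{\lambda_n}$ are inactive at $u_n$, so $u_n$ satisfies the weak formulation of $(P_{\lambda_n})$, and
$$\hat\varphi_{\lambda_n}(u_n)=\hat w_{\lambda_n}(u_n)\le\hat w_{\lambda_n}(\overline u)\le\gamma_{p(z)}(\overline u)+\int_\Omega\tfrac1{q(z)}|D\overline u|^{q(z)}dz=:C_0,$$
a bound independent of $n$ (we used $f\ge0$ and $\lambda_n\le\lambda^*$, which make the reaction contribution to $\hat w_{\lambda_n}(\overline u)$ nonnegative).

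\emph{Step 2 (a priori bound --- the crux).} I claim $\{u_n\}_{n\ge1}$ is bounded in $W^{1,p(z)}(\Omega)$. Testing the weak form of $(P_{\lambda_n})$ with $h=u_n$ and combining with $\hat\varphi_{\lambda_n}(u_n)\le C_0$ exactly as in the computation leading to \eqref{eq50} (using $q_+<p_+$ and $0<\eta(\cdot)<1$) one gets $\int_\Omega\xi_{\lambda_n}(z,u_n)\,dz\le C_1$ with $C_1$ independent of $n$. Suppose $\|u_n\|\to\infty$ and set $y_n=u_n/\|u_n\|$, so that, along a subsequence, $y_n\overset{w}{\to}y\ge0$ in $W^{1,p(z)}(\Omega)$ and $y_n\to y$ in $L^{p(z)}(\Omega)$. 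If $y\not\equiv0$, then $u_n\to+\infty$ a.e. on $\{y>0\}$ and $H_1(ii)$ together with Fatou's lemma force $\int_\Omega\lambda_nF(z,u_n)/\|u_n\|^{p_+}dz\to+\infty$, which is incompatible with the estimate coming from $\hat\varphi_{\lambda_n}(u_n)\le C_0$ and Proposition \ref{prop1}. If $y\equiv0$, one argues as in the proof of the $C$-condition in Proposition \ref{prop15}: passing to the functional $u\mapsto\tfrac1{p_+}\big[\rho_{p(z)}(Du)+\rho_{p(z),\xi}(u)\big]-\int_\Omega\hat B_{\lambda_n}(z,u)\,dz$ (which lies below $\hat\varphi_{\lambda_n}$), maximizing it along the segment $t\mapsto tu_n$, $t\in[0,1]$, and invoking the quasimonotonicity hypothesis $H_1(iii)$ with the uniform bound $\int_\Omega\xi_{\lambda_n}(z,u_n)\,dz\le C_1$, again yields a contradiction. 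The only extra point with respect to Proposition \ref{prop15} is that all constants, and the function $\hat\vartheta_\lambda$ in $H_1(iii)$, must be chosen uniformly for $\lambda$ ranging in the compact interval $[\lambda_1,\lambda^*]$, which follows readily from the form of the hypotheses. Hence $\{u_n\}$ is bounded.

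\emph{Step 3 (limit).} Passing to a further subsequence, $u_n\overset{w}{\to}u^*$ in $W^{1,p(z)}(\Omega)$ and $u_n\to u^*$ in $L^{r(z)}(\Omega)$ (hence in $L^{p(z)}(\Omega)$ and a.e.), and $u^*\ge\overline u$ because $\overline u\le u_n$. Testing the weak form of $(P_{\lambda_n})$ with $h=u_n-u^*$: the singular term is harmless since $0\le u_n^{-\eta(z)}\le\overline u^{-\eta(z)}\in L^\infty(\Omega)$ by Proposition \ref{prop10}, the perturbation term $\lambda_n\int_\Omega f(z,u_n)(u_n-u^*)\,dz\to0$ by $H_1(i)$ and the H\"older inequality in $L^{r(z)}(\Omega)$, and the $A_{q(z)}$-term is absorbed by monotonicity; therefore $\limsup_n\langle A_{p(z)}(u_n),u_n-u^*\rangle\le0$ and Proposition \ref{prop2} gives $u_n\to u^*$ in $W^{1,p(z)}(\Omega)$. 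Letting $n\to\infty$ in the weak formulation of $(P_{\lambda_n})$ --- using $\lambda_n\to\lambda^*$, the continuity of $A_{p(z)}$ and $A_{q(z)}$, dominated convergence for the singular term (dominated by $\overline u^{-\eta(z)}|h|\in L^1(\Omega)$) and $H_1(i)$ for the reaction --- shows that $u^*\in W^{1,p(z)}(\Omega)$, $u^*\ge\overline u$, satisfies the weak formulation of $(P_{\lambda^*})$. In particular $u^*\not\equiv0$ and $(u^*)^{-\eta(\cdot)}h\in L^1(\Omega)$ for every $h\in W^{1,p(z)}(\Omega)$, so $u^*\in S_{\lambda^*}$ and $\lambda^*\in\mathcal L$.

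I expect Step 2 --- the a priori bound, and within it the dichotomy $y\not\equiv0$ versus $y\equiv0$ with constants uniform in $\lambda_n$ --- to be the main obstacle; Steps 1 and 3 are routine once the construction of Proposition \ref{prop13} and Propositions \ref{prop1}, \ref{prop2} and \ref{prop10} are in hand.
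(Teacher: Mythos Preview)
Your proof is correct and follows essentially the same route as the paper's own argument: pick $\lambda_n\uparrow\lambda^*$, use the minimization construction from Proposition~\ref{prop13} to produce $u_n\in S_{\lambda_n}$ with the uniform energy bound $\hat\varphi_{\lambda_n}(u_n)\le\hat\varphi_{\lambda_n}(\overline u)$, then repeat the $C$-condition argument of Proposition~\ref{prop15} to get boundedness and pass to the limit. The paper condenses all of this into three lines (``see the proof of Proposition~\ref{prop13}'' and ``reasoning as in the claim in the proof of Proposition~\ref{prop15}''); your write-up is simply a more explicit version of the same scheme, including the useful observation---tacit in the paper---that the constants and $\hat\vartheta_\lambda$ from $H_1(iii)$ must be taken uniformly over $\lambda\in[\lambda_1,\lambda^*]$.
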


\begin{proof}
  Let $\{\lambda_n\}_{n\geq1}\subseteq(0,\lambda^*)\subseteq\mathcal{L}$ and assume that $\lambda_n\to\lambda^*$. We can find $u_n\in S_\lambda\subseteq{\rm int}\,C_+$ $n\in\NN$ such that
\begin{eqnarray}
% \nonumber % Remove numbering (before each equation)
  \hat{\varphi}_{\lambda_n}(u_n) &\leq& \hat{\varphi}_{\lambda_n}(\overline{u})<0 \mbox{ for all }n\in\NN \label{eq65}\\ \nonumber
  && \mbox{ (see the proof of Proposition \ref{prop13} and \eqref{eq64}). }
\end{eqnarray}

Also we have
\begin{equation}\label{eq66}
  \hat{\varphi}'_{\lambda_n}(u_n)=0 \mbox{ for all }n\in\NN.
\end{equation}

Using \eqref{eq65}, \eqref{eq66} and reasoning as in the claim in the proof of Proposition \ref{prop15}, we obtain
\begin{eqnarray*}
% \nonumber % Remove numbering (before each equation)
  && u_n\to u^* \mbox{ in }W^{1,p(z)}(\Omega),\overline{u}\leq u^* \mbox{ (see Proposition \ref{prop10})} \\
  &\Rightarrow& u^*\in S_{\lambda^*}\subseteq{\rm int}\,C_+ \mbox{ and so }\lambda^*\in \mathcal{L}.
\end{eqnarray*}
The proof is now complete.
\end{proof}

According to the above proposition, we have
$$
\mathcal{L}=(0,\lambda^*].
$$

\section{Minimal positive solutions}

Recall that a set $S\subseteq W^{1,p(z)}(\Omega)$ is said to be downward directed, if for all $u_1,u_2\in S$, we can find $u\in S$ such that $u\leq u_1$, $u\leq u_2$.

As in the proof of Proposition 18 of Papageorgiou, R\u adulescu \& Repov\v s \cite{16Pap-Rad-Rep}, we prove that for every $\lambda\in\mathcal{L}$, the solution set $S_\lambda\subseteq{\rm int}\,C_+$ is downward directed. We will show that $S_\lambda$ has a minimal element.

\begin{prop}\label{prop17}
  If hypotheses $H_0$, $H_1$ hold and $\lambda\in\mathcal{L}=(0,\lambda^*]$, then problem $(P_\lambda)$ has a smallest positive solution $u^*_\lambda\in S_\lambda\subseteq{\rm int}\,C_+$
 (that is, $u_\lambda^*\leq u$ for all $u\in S_\lambda$).
\end{prop}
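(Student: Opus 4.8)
The plan is to obtain $u_\lambda^*$ as the $C^1(\overline{\Omega})$-limit of a decreasing sequence drawn from $S_\lambda$, exploiting the uniform lower barrier $\overline{u}\in{\rm int}\,C_+$ of Proposition \ref{prop8} to keep the singular term under control. Since $S_\lambda$ is downward directed (established exactly as in \cite{16Pap-Rad-Rep}), I would first invoke the standard selection principle for downward directed families in ordered Sobolev spaces (see Papageorgiou, R\u adulescu \& Repov\v s \cite{15Pap-Rad-Rep}) to produce a decreasing sequence $\{u_n\}_{n\ge1}\subseteq S_\lambda$ with $\inf_{n\ge1}u_n=\inf S_\lambda$ (pointwise a.e.). By Proposition \ref{prop10} we have $\overline{u}\le u_n$ for all $n$, while the monotonicity of the sequence gives $\overline{u}\le u_n\le u_1$ for all $n\in\NN$; in particular $\{u_n\}_{n\ge1}$ is bounded in $L^\infty(\Omega)$.

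Each $u_n\in S_\lambda$ solves $(P_\lambda)$ in the weak sense, and the reaction $z\mapsto u_n(z)^{-\eta(z)}+\lambda f(z,u_n(z))$ is bounded in $L^\infty(\Omega)$ uniformly in $n$: indeed $0\le u_n^{-\eta(z)}\le\overline{u}^{-\eta(z)}\in L^\infty(\Omega)$ by Proposition \ref{prop10}, and $0\le f(z,u_n)\le a(z)[1+u_n^{r(z)-1}]\le a(z)[1+u_1^{r(z)-1}]\in L^\infty(\Omega)$ by hypothesis $H_1(i)$ together with $u_n\le u_1$. Hence the anisotropic regularity theory of Fan \cite{8Fan} (see also \cite{13Lieberman}) provides $\alpha\in(0,1)$ and $C>0$ with $u_n\in C^{1,\alpha}(\overline{\Omega})$ and $\|u_n\|_{C^{1,\alpha}(\overline{\Omega})}\le C$ for all $n$. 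The compact embedding $C^{1,\alpha}(\overline{\Omega})\hookrightarrow C^1(\overline{\Omega})$ and the fact that $\{u_n\}_{n\ge1}$ is decreasing force convergence of the whole sequence, $u_n\to u_\lambda^*$ in $C^1(\overline{\Omega})$, with $u_\lambda^*(z)=\inf_{n\ge1}u_n(z)$ for a.e. $z\in\Omega$.

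It remains to pass to the limit in the weak formulation. The $C^1$-convergence yields $A_{p(z)}(u_n)\to A_{p(z)}(u_\lambda^*)$, $A_{q(z)}(u_n)\to A_{q(z)}(u_\lambda^*)$ in $W^{1,p(z)}(\Omega)^*$ and $\int_\Omega\xi(z)u_n^{p(z)-1}h\,dz\to\int_\Omega\xi(z)(u_\lambda^*)^{p(z)-1}h\,dz$; on the right-hand side, $u_n^{-\eta(z)}\to(u_\lambda^*)^{-\eta(z)}$ and $f(z,u_n)\to f(z,u_\lambda^*)$ pointwise, both dominated in $L^\infty(\Omega)$ (using $u_\lambda^*\ge\overline{u}>0$ for the first), so dominated convergence gives convergence of the reaction integrals against every $h\in W^{1,p(z)}(\Omega)$. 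Since $(u_\lambda^*)^{-\eta(\cdot)}\le\overline{u}^{-\eta(\cdot)}\in L^\infty(\Omega)$ and $u_\lambda^*\ge\overline{u}\in{\rm int}\,C_+$ (so $u_\lambda^*\not\equiv0$), we conclude $u_\lambda^*\in S_\lambda$, and Proposition \ref{prop11} gives $u_\lambda^*\in{\rm int}\,C_+$. Minimality is then immediate: for any $u\in S_\lambda$ we have $u_\lambda^*(z)=\inf_{n\ge1}u_n(z)=(\inf S_\lambda)(z)\le u(z)$ for a.e. $z\in\Omega$, hence $u_\lambda^*\le u$.

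The main obstacle I anticipate is twofold: making the selection of the decreasing minimizing sequence rigorous (so that its pointwise limit is genuinely $\inf S_\lambda$), and, more substantively, guaranteeing that this limit is an honest solution rather than merely a supersolution. Both issues are resolved by the uniform lower bound $\overline{u}$ from Proposition \ref{prop10}, which simultaneously dominates the singular term $u_n^{-\eta(z)}$, keeps $u_\lambda^*$ bounded away from $0$, and — through Fan's regularity — supplies the $C^{1,\alpha}$-compactness needed to pass to the limit in the weak formulation.
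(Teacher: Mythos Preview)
Your argument is correct and follows essentially the same route as the paper: extract a decreasing sequence from the downward directed set $S_\lambda$ whose infimum is $\inf S_\lambda$, use the uniform two-sided barrier $\overline{u}\le u_n\le u_1$ to control the singular term and obtain compactness, and pass to the limit to identify $u_\lambda^*\in S_\lambda$. The only cosmetic difference is that the paper cites Hu \& Papageorgiou \cite[Lemma 3.10]{12Hu-Pap} for the selection step and phrases the convergence at the $W^{1,p(z)}$-level (referring back to the $(S)_+$-type argument of Proposition \ref{prop13}), whereas you go directly through the $C^{1,\alpha}$-estimates of Fan \cite{8Fan}---a variant the paper itself uses in Propositions \ref{prop8} and \ref{prop18}.
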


\begin{proof}
  On account of Lemma 3.10 of Hu \& Papageorgiou \cite[p. 176]{12Hu-Pap}, we can find $\{u_n\}_{n\geq1}\subseteq S_\lambda$ decreasing (since $S_\lambda$ is downward directed) such that
$$
\inf_{n\geq1} u_n=\inf S_\lambda.
$$

We have
\begin{eqnarray}
% \nonumber % Remove numbering (before each equation)
  && \hat{\varphi}'_\lambda(u_n)=0 \mbox{ for all }n\in\NN, \label{eq67} \\
  && \overline{u}\leq u_n\leq u_1 \mbox{ for all }n\in \NN. \label{eq68}
\end{eqnarray}

From \eqref{eq67} and \eqref{eq68}, it follows that
$$
\{u_n\}_{n\geq1}\subseteq W^{1,p(z)}(\Omega) \mbox{ is bounded. }
$$

Then as in the proof of Proposition \ref{prop13} and using the fact that $\{u_n\}_{n\geq1}$ is decreasing, we obtain
\begin{eqnarray*}
% \nonumber % Remove numbering (before each equation)
  && u_n\to u_\lambda^* \mbox{ in }W^{1,p(z)}(\Omega), \\
  &\Rightarrow& u_\lambda^*\in S_\lambda\subseteq{\rm int}\,C_+, u_\lambda^*=\inf S_\lambda,
\end{eqnarray*}
which concludes the proof.
\end{proof}

Next we examine the map $\lambda\mapsto u_\lambda^*$ from $\mathcal{L}=(0,\lambda^*]$ into $C^1(\overline{\Omega})$.

\begin{prop}\label{prop18}
If hypotheses $H_0$, $H_1$ hold, then
\begin{itemize}
  \item[$(a)$] the map $\lambda\mapsto u_\lambda^*$ from $\mathcal{L}=(0,\lambda^*]$ into $C^1(\overline{\Omega})$ is strictly increasing in the sense that
$$
0<\mu<\lambda\leq \lambda^*\Rightarrow u_\lambda^*-u_\mu^* \in D_+;
$$
  \item[$(b)$] $\lambda\mapsto u_\lambda^*$ is left continuous from $\mathcal{L}$ into $C^1(\overline{\Omega})$.
\end{itemize}
\end{prop}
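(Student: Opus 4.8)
The plan is to deduce both statements from the strong comparison principle (Proposition \ref{prop3}), Proposition \ref{prop14}, the lower bound of Proposition \ref{prop10}, and the minimality of $u_\lambda^*$ furnished by Proposition \ref{prop17}, together with the anisotropic regularity theory already used repeatedly above.

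For part $(a)$, fix $0<\mu<\lambda\le\lambda^*$. Since $\lambda\in\mathcal{L}$ and $u_\lambda^*\in S_\lambda\subseteq{\rm int}\,C_+$, Proposition \ref{prop14} applied with $u_\lambda=u_\lambda^*$ produces some $u_\mu\in S_\mu\subseteq{\rm int}\,C_+$ with $u_\lambda^*-u_\mu\in D_+$. By the minimality of $u_\mu^*$ we have $u_\mu^*\le u_\mu$, hence $u_\mu-u_\mu^*\in C_+$ and
$$
u_\lambda^*-u_\mu^*=(u_\lambda^*-u_\mu)+(u_\mu-u_\mu^*).
$$
It then suffices to note that $D_++C_+\subseteq D_+$: if $v\in D_+$ and $w\in C_+$, then $v+w>0$ on $\Omega$, while at any $z\in\partial\Omega$ with $(v+w)(z)=0$ we necessarily have $v(z)=w(z)=0$, so $z$ is a boundary minimum of $w$, giving $\frac{\partial w}{\partial n}(z)\le 0$, and therefore $\frac{\partial(v+w)}{\partial n}(z)=\frac{\partial v}{\partial n}(z)+\frac{\partial w}{\partial n}(z)<0$. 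Thus $u_\lambda^*-u_\mu^*\in D_+$.

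For part $(b)$, let $\lambda\in\mathcal{L}=(0,\lambda^*]$ and let $\lambda_n\uparrow\lambda$ with $\lambda_n<\lambda$. By part $(a)$ the sequence $\{u_{\lambda_n}^*\}_{n\ge1}$ is increasing and $u_{\lambda_n}^*\le u_\lambda^*$ for all $n$; combining this with Proposition \ref{prop10} we obtain $\overline{u}\le u_{\lambda_n}^*\le u_\lambda^*$ for all $n\in\NN$. Hence $\{u_{\lambda_n}^*\}_{n\ge1}$ is bounded in $L^\infty(\Omega)$, and since $0\le (u_{\lambda_n}^*)^{-\eta(z)}\le\overline{u}^{-\eta(z)}\in L^\infty(\Omega)$ and $0\le f(\cdot,u_{\lambda_n}^*(\cdot))\le a(\cdot)[1+(u_\lambda^*)^{r(\cdot)-1}]\in L^\infty(\Omega)$ (hypothesis $H_1(i)$ and $u_\lambda^*\in{\rm int}\,C_+$), the reactions of $(P_{\lambda_n})$ are uniformly bounded in $L^\infty(\Omega)$. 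The anisotropic regularity theory (Fan \cite{8Fan}) then yields $\alpha\in(0,1)$ and $C>0$ with $\|u_{\lambda_n}^*\|_{C^{1,\alpha}(\overline{\Omega})}\le C$ for all $n$; by the compact embedding $C^{1,\alpha}(\overline{\Omega})\hookrightarrow C^1(\overline{\Omega})$ together with the monotonicity of $\{u_{\lambda_n}^*\}$, the whole sequence converges, $u_{\lambda_n}^*\to\tilde{u}$ in $C^1(\overline{\Omega})$ with $\overline{u}\le\tilde{u}\le u_\lambda^*$. Passing to the limit in the weak formulation of $(P_{\lambda_n})$, using $u_{\lambda_n}^*\ge\overline{u}>0$ to dominate the singular term and the continuity of $A_{p(z)}$, $A_{q(z)}$, we get $\tilde{u}\in S_\lambda\subseteq{\rm int}\,C_+$. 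Minimality of $u_\lambda^*$ forces $u_\lambda^*\le\tilde{u}$, and together with $\tilde{u}\le u_\lambda^*$ this gives $\tilde{u}=u_\lambda^*$; since this holds for every such sequence, $\lambda\mapsto u_\lambda^*$ is left continuous from $\mathcal{L}$ into $C^1(\overline{\Omega})$.

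The main technical point is the limit passage in $(b)$: one must verify that the squeeze $\overline{u}\le u_{\lambda_n}^*\le u_\lambda^*$ genuinely makes both the singular term and the parametric perturbation uniformly bounded in $L^\infty(\Omega)$, so that the anisotropic regularity estimates hold with constants independent of $n$, and then that the limit is correctly identified as the minimal solution — which is forced by minimality from below and the monotone squeeze from above. I note that this reasoning is asymmetric: for $\lambda_n\downarrow\lambda$ part $(a)$ only gives $u_{\lambda_n}^*\ge u_\lambda^*$, so one cannot squeeze the limit down to $u_\lambda^*$, which is precisely why only left continuity is asserted.
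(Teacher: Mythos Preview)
Your proof is correct and follows essentially the same route as the paper: part $(a)$ via Proposition \ref{prop14} plus minimality of $u_\mu^*$, and part $(b)$ via the squeeze $\overline{u}\le u_{\lambda_n}^*\le u_\lambda^*$, uniform $C^{1,\alpha}$ bounds from Fan's regularity, and identification of the $C^1$-limit using minimality together with part $(a)$. Your version is actually more explicit than the paper's---you spell out why $D_++C_+\subseteq D_+$, why the reactions are uniformly bounded in $L^\infty$, and why the limit $\tilde u$ lies in $S_\lambda$---and you identify the limit by a direct two-sided squeeze rather than the paper's contradiction argument, but these are presentational rather than substantive differences.
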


\begin{proof}
  $(a)$ Let $0<\mu<\lambda\leq \lambda^*$. On account of Proposition \ref{prop14}, we can find $u_\mu\in S_\mu\subseteq{\rm int}\,C_+$ such that
\begin{eqnarray*}
% \nonumber % Remove numbering (before each equation)
   && u_\lambda^*-u_\mu\in D_+, \\
  &\Rightarrow& u_\lambda^*-u_\mu^* \in D_+.
\end{eqnarray*}
$(b)$ Let $\lambda_n\to \lambda^-$, $\lambda_n\in\mathcal{L}$ for all $n\in\NN$. From Proposition \ref{prop10} and part $(a)$ of this proposition, we have
$$
\overline{u}\leq u_\lambda^*\leq u_{\lambda_1}^* \mbox{ for all }n\in\NN.
$$

It follows that
$$
\{u_n^*=u_{\lambda_n}^*\}_{n\geq1}\subseteq W^{1,p(z)}(\Omega) \mbox{ is bounded. }
$$

Then Proposition 3.1 of Gasinski \& Papageorgiou \cite{9Gas-Pap} implies that
$$
\|u_n^*\|_\infty\leq C_{16} \mbox{ for some }C_{16}>0, \mbox{ all }n\in\NN.
$$

Using Theorem 1.3 of Fan \cite{8Fan} (see also Lieberman \cite{13Lieberman}), we see that we can find $\alpha\in(0,1)$ and $C_{17}>0$ such that
\begin{equation}\label{eq69}
  u_n^*\in C^{1,\alpha}(\overline{\Omega}) \mbox{ and } \|u_n^*\|_{C^{1,\alpha}(\overline{\Omega})}\leq C_{17} \mbox{ for all }n\in \NN.
\end{equation}

From \eqref{eq69}, the compact embedding of $C^{1,\alpha}(\overline{\Omega})$ into $C^1(\overline{\Omega})$ and the monotonicity of the sequence $\{u_n^*\}_{n\geq1}$, we have that
\begin{equation}\label{eq70}
  u_n^*\to \hat{u}_\lambda^* \mbox{ in }C^1(\overline{\Omega}).
\end{equation}

If $\hat{u}_\lambda^*\not=u_\lambda^*$, then there exists $z_0\in\overline{\Omega}$ such that
\begin{eqnarray*}
% \nonumber % Remove numbering (before each equation)
  && u_\lambda^*(z_0)<\hat{u}_\lambda^*(z_0), \\
  &\Rightarrow& u_\lambda^*(z_0)<u_n^*(z_0) \mbox{ for all }n\geq n_0 \mbox{ (see \eqref{eq70}), }
\end{eqnarray*}
contradicting part $(a)$. So $\hat{u}_\lambda^*=u_\lambda^*$ and we have the left continuity of the map $\lambda\mapsto u_\lambda^*$.
\end{proof}

The next theorem summarizes our main contributions in this paper concerning problem $(P_\lambda)$.

\begin{thm}\label{th1}
  If hypotheses $H_0$, $H_1$ hold, then there exists $\lambda^*<\infty$ such that
\begin{itemize}
  \item[(a)] for all $\lambda\in(0,\lambda^*)$, problem $(P_\lambda)$ has at least two positive solutions $u_0,\hat{u}\in{\rm int}\,C_+$, $u_0\leq \hat{u}$, $u_0\not=\hat{u}$;
  \item[(b)] for $\lambda=\lambda^*$, problem $(P_\lambda)$ has at least one positive solution $u^*\in{\rm int}\,C_+$;
  \item[(c)] for all $\lambda>\lambda^*$, problem $(P_\lambda)$ has no positive solutions;
  \item[(d)] for all $\lambda\in\mathcal{L}=(0,\lambda^*]$, problem $(P_\lambda)$ has a smallest positive solution $u_\lambda^*\in{\rm int}\,C_+$ and the map $\lambda\mapsto u_\lambda^*$ is strictly increasing and left-continuous from $\mathcal{L}$ into $C^1(\overline{\Omega})$.
\end{itemize}
\end{thm}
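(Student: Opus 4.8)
The plan is to assemble the statement from the propositions established in Sections 3--5, since each of the four assertions has essentially already been proved; the only work is to organize them around the critical parameter $\lambda^*=\sup\mathcal{L}$.

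First I would record that $\lambda^*<\infty$ by Proposition \ref{prop12}, so the threshold is well defined and finite. Next, Proposition \ref{prop9} gives $(0,\lambda_0]\subseteq\mathcal{L}$ for some $\lambda_0>0$, hence $\mathcal{L}\not=\emptyset$ and $\lambda^*>0$; Proposition \ref{prop13} shows that $\mathcal{L}$ is downward closed (an interval), and Proposition \ref{prop16} shows that $\lambda^*\in\mathcal{L}$. Combining these facts yields $\mathcal{L}=(0,\lambda^*]$. Assertion (c) is then immediate: for $\lambda>\lambda^*$ we have $\lambda\notin\mathcal{L}$, that is, $S_\lambda=\emptyset$.

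For (a) I would invoke Proposition \ref{prop15} directly: for every $\lambda\in(0,\lambda^*)$ it produces $u_0,\hat{u}\in{\rm int}\,C_+$ with $u_0\leq\hat{u}$ and $u_0\not=\hat{u}$, both solving $(P_\lambda)$. For (b), since $\lambda^*\in\mathcal{L}$ (Proposition \ref{prop16}) and $S_{\lambda^*}\subseteq{\rm int}\,C_+$ (Proposition \ref{prop11}), there is at least one positive solution $u^*\in{\rm int}\,C_+$ at $\lambda=\lambda^*$. Finally, for (d), Proposition \ref{prop17} gives, for each $\lambda\in\mathcal{L}$, a smallest positive solution $u_\lambda^*\in{\rm int}\,C_+$, and Proposition \ref{prop18} establishes that the map $\lambda\mapsto u_\lambda^*$ is strictly increasing in the strong sense ($u_\lambda^*-u_\mu^*\in D_+$ whenever $0<\mu<\lambda\leq\lambda^*$) and left-continuous from $\mathcal{L}$ into $C^1(\overline{\Omega})$.

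The genuine difficulties all lie upstream --- in the $C$-condition verification and mountain-pass argument behind Proposition \ref{prop15}, the a priori bound behind Proposition \ref{prop12}, and the strong comparison principle (Proposition \ref{prop3}) underlying Propositions \ref{prop14} and \ref{prop18}. Once those are in hand, the theorem itself requires no new estimates; it is the bookkeeping statement that packages the bifurcation picture $\mathcal{L}=(0,\lambda^*]$ together with multiplicity below the threshold, existence at the threshold, and the monotonicity and left-continuity of the minimal-solution branch.
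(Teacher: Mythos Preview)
Your proposal is correct and mirrors the paper exactly: the theorem is stated there as a summary of Propositions \ref{prop9}--\ref{prop18} with no additional proof, and your assembly of parts (a)--(d) from Propositions \ref{prop12}, \ref{prop13}, \ref{prop15}, \ref{prop16}, \ref{prop11}, \ref{prop17}, and \ref{prop18} is precisely the intended bookkeeping.
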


\subsection*{Acknowledgments} The authors were supported by the Slovenian Research Agency grants
P1-0292, J1-8131, N1-0064, N1-0083, and N1-0114. We wish to thank the anonymous reviewers for their remarks and constructive criticisms that helped us improve the presentation.

\end{document}